\documentclass[reqno,11pt]{amsart}

\usepackage{amssymb}
\usepackage{amssymb, latexsym,bm}
\usepackage{hyperref}
\usepackage{amsmath}
\usepackage{amscd}
\usepackage{enumerate}
\usepackage{amsfonts}
\usepackage{graphicx}
\usepackage[all]{xypic}
\usepackage{mathrsfs}
\usepackage{tikz}

\usepackage{setspace}
\usepackage[titletoc]{appendix}

\setlength{\marginparwidth}{1in} \setlength{\textheight}{22cm}
\setlength{\oddsidemargin}{0.09in}
\setlength{\evensidemargin}{0.10in}
\setlength{\textwidth}{15.6cm}
\setlength{\topmargin}{0in} \setlength{\headheight}{0.18in}
\setlength{\marginparwidth}{1.0in}

%
%
%
%

\setlength{\marginparwidth}{.8in} \setlength{\textheight}{22cm}
\setlength{\oddsidemargin}{0.09in}
\setlength{\evensidemargin}{0.10in}
\setlength{\textwidth}{15.6cm}
\setlength{\topmargin}{0in} \setlength{\headheight}{0.18in}
\setlength{\marginparwidth}{1.0in}


\allowdisplaybreaks

%

\newcommand{\C}{{\mathbb{C}}}
\newcommand{\R}{{\mathbb{R}}}

\theoremstyle{plain}
\newtheorem{theorem}{Theorem}
\newtheorem{proposition}[theorem]{Proposition}
\newtheorem{lemma}[theorem]{Lemma}
\newtheorem{corollary}[theorem]{Corollary}

\theoremstyle{definition}

\newtheorem{remark}[theorem]{Remark}

\newtheorem{claim}{Claim}

\usepackage{setspace}

\numberwithin{equation}{section}
\numberwithin{theorem}{section}

\newcommand{\step}[1]{\textit{#1}.~~}

\renewcommand{\R}{\mathbb{R}}
\renewcommand{\i}{i}
\newcommand{\set}[1]{~~\left\{{#1}\right\}~~}

\newcommand{\bigo}[1]{\mathrm{O}\left(~{#1}~\right)}
\newcommand{\abs}[1]{\left|{#1}\right|}
\newcommand{\sts}[1]{\left({#1}\right)}
\newcommand{\ltl}[1]{~\left\{{#1}\right\}}

\newcommand{\bilin}[2]{\left(\,{#1}\, ,\,{#2}\,\right)}
\newcommand{\action}[2]{\left({#1}\, ,\,{#2}\right)}

\newcommand{\normhone}[1]{\left\|#1\right\|_{H^1(\R)}}

\newcommand{\normto}[1]{\|#1\|_{2}}
\newcommand{\dx}{\mathrm{d}x}
\newcommand{\ess}{\mathrm{ess}}
\newcommand{\spann}{\mathrm{span}}
\newcommand{\mass}[1]{M\left({#1}\right)}
\newcommand{\momentum}[1]{P\left({#1}\right)}

\makeatletter

\newcommand{\Rmnum}[1]{\expandafter\@slowromancap\romannumeral #1@}

\makeatother
%
%
%
%

\begin{document}

\onehalfspacing

\title[Stability of the traveling waves of DNLS]
{Stability of the traveling waves for the derivative Schr\"{o}dinger
equation in the energy space }

\author[]{Changxing Miao}
\address{\hskip-1.15em Changxing Miao:
\hfill\newline Institute of Applied Physics and Computational
Mathematics, \hfill\newline P. O. Box 8009,\ Beijing,\ China,\
100088.}  \email{miao\_changxing@iapcm.ac.cn}

\author[]{Xingdong Tang}
\address{\hskip-1.15em Xingdong Tang \hfill\newline Beijing Computational Science Research Center, \hfill\newline No. 10 West Dongbeiwang Road, Haidian
District, Beijing, China, 100193,}
\email{xdtang202@163.com}

\author[]{Guixiang Xu}
\address{\hskip-1.15em Guixiang Xu \hfill\newline Institute of
Applied Physics and Computational Mathematics, \hfill\newline P. O.
Box 8009,\ Beijing,\ China,\ 100088.}
\email{xu\_guixiang@iapcm.ac.cn}

\subjclass[2000]{Primary: 35L70, Secondary: 35Q55}

\keywords{Derivative Schr\"{o}dinger equation; Global
well-posedness; Stability; Traveling waves.}

\begin{abstract}
In this paper, we continue the study of the dynamics of the
traveling waves for nonlinear Schr\"{o}dinger equation with derivative (DNLS) in the energy space. Under some technical
assumptions on the speed of each traveling wave, the stability of
the sum of two traveling waves for DNLS is obtained in the energy
space by Martel-Merle-Tsai's analytic approach in
\cite{MartelMT:Stab:gKdV, MartelMT:Stab:NLS}. As a by-product, we
also give an alternative proof of the stability of the single
traveling wave in the energy space in \cite{ColinOhta-DNLS}, where
Colin and Ohta made use of the concentration-compactness argument.
\end{abstract}

\maketitle


\section{Introduction}
In this paper, we consider the Cauchy problem for nonlinear
Schr\"{o}dinger equation with derivative (DNLS)
\begin{equation} \label{DNLS-a}
\left\lbrace \aligned
    &
    \i\partial_t v + \partial^2_x v + \i\partial_x\sts{\abs{v}^2 v}=0,\; t\in \R, \\
    &
    v\sts{0,x}=v_0\sts{x}\in H^1(\R).
\endaligned
\right.
\end{equation}
\eqref{DNLS-a} appears in plasma physics \cite{MOMT-PHY, M-PHY,
SuSu-book}. There are many equivalent forms under the gauge
transformation. For instance, if we take the following gauge
transformation,
\begin{align*}v(t,x)\mapsto u(t,x)=G_{3/4}(v)(t,x) \triangleq e^{\i \frac34
\int^x_{-\infty}|v(t,\eta)|^2\;d\eta}v(t,x), \end{align*}
 then
\eqref{DNLS-a} is equivalent to the following equation (DNLS)
\begin{equation} \label{DNLS}
\left\lbrace \aligned
    &
    \i\partial_t u + \partial^2_x u  + \frac{1}{2}\i \abs{u}^2\partial_x u - \frac{1}{2}\i u^2\partial_x\overline{u}+\frac{3}{16}\abs{u}^4 u  =0,\; t\in \R\\
    &
    u\sts{0,x}=u_0\sts{x}\in H^1(\R),
\endaligned
\right.
\end{equation}
which is $L^2$-critical NLS with derivative for the fact that the scaling
symmetry
\begin{align*}u(t,x)\mapsto u_{\lambda}(t,x)=\lambda^{1/2}u(\lambda^2t, \lambda x)
\end{align*}
leaves both \eqref{DNLS} and the mass invariant. The mass, momentum
and energy are defined as following
\begin{align*}
M(u)(t)= & \frac12 \int |u(t,x)|^2 \;dx, \\
P(u)(t)=& -\frac12 \Im \int \left(\bar{u}\,\partial_x u\right)(t,x)
dx + \frac18 \int
|u(t,x)|^4\; dx,\\
E(u)(t)=& \frac12 \int |\partial_x u(t,x)|^2 \;dx -\frac{1}{32} \int
|u(t,x)|^6\; dx.
\end{align*}
They are conserved under the flow \eqref{DNLS} according to the
phase rotation invariance, spatial translation invariance and time
translation invariance respectively. Compared with the $L^2$-critical
NLS, \eqref{DNLS-a} or \eqref{DNLS} doesn't enjoy the Galilean
invariance and pseudo-conformal invariance any more.

Local well-posedness result for \eqref{DNLS} in the energy space has
been worked out by Hayashi and Ozawa \cite{HaOz-94, Oz-96}. They
combined the fixed point argument with $L^4_IW^{1,\infty}(\R)$
estimate to construct the local-in-time solution with arbitrary data
in the energy space. For other kinds of local well-posedness
results, we can refer to \cite{Ha-93, HaOz-92}. Since \eqref{DNLS}
is $\dot H^1$-subcritical, the maximal lifespan interval only depends on
the $H^1$ norm of initial data. More precisely, we have
\begin{theorem}\label{Thm:LWP}\cite{HaOz-94, Oz-96} For any $u_0 \in H^1(\R)$ and $t_0 \in \R$, there
exists a unique maximal-lifespan solution $u:I\times \R \rightarrow
\C$ to \eqref{DNLS} with $u(t_0)=u_0$, the map $u_0\rightarrow u$ is continuous from $H^1(\R)$ to $C(I, H^1(\R))\cap L^4_{loc}(I; W^{1,\infty}(\R))$.  Moreover, the solution also
has the following properties:
\begin{enumerate}
\item $I$ is an open neighborhood of $t_0$.

\item The mass, momentum and energy are conserved, that is, for all
$t\in I$,
\begin{align*}
M(u)(t)=M(u)(t_0), \;\; P(u)(t)=P(u)(t_0),\;\; E(u)(t)=E(u)(t_0).
\end{align*}

\item If\; $\sup(I)<+\infty$,  $(\text{or}\, \inf(I)>-\infty)$ , then
\begin{align*}
\lim_{t\rightarrow\sup(I)}\big\|\partial_x
u(t)\big\|_{L^2}=+\infty,\;\; \left(
\lim_{t\rightarrow\inf(I)}\big\|\partial_x u(t)\big\|_{L^2}=+\infty,
respectively.\right)
\end{align*}

\item If $\big\| u_0 \big\|_{H^1}$ is sufficiently small,
then $u$ is a global solution.
\end{enumerate}
\end{theorem}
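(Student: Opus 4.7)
The plan is to prove the existence/uniqueness part via a fixed-point argument in a Strichartz-adapted space, then to derive the four additional properties by standard extension and conservation arguments. First, I would rewrite \eqref{DNLS} in Duhamel form
\[
u(t) = e^{\i (t-t_0)\partial_x^2}u_0 - \i\int_{t_0}^{t}e^{\i(t-s)\partial_x^2}\mathcal{N}(u)(s)\,ds,
\qquad \mathcal{N}(u)=\tfrac12\i|u|^2\partial_x u-\tfrac12\i u^2\partial_x\bar u+\tfrac{3}{16}|u|^4u,
\]
and set up a contraction on a closed ball of
\[
X_{I}:=\bigl\{u\in C(I;H^1(\R))\cap L^4(I;W^{1,\infty}(\R))\bigr\}
\]
for a short interval $I\ni t_0$ whose length $T$ depends only on $\|u_0\|_{H^1(\R)}$.

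The linear ingredient is the $1$-D admissible Strichartz pair $(4,\infty)$, which after differentiation yields
\[
\|e^{\i t\partial_x^2}\phi\|_{L^\infty_t H^1_x}+\|e^{\i t\partial_x^2}\phi\|_{L^4_t W^{1,\infty}_x}\lesssim \|\phi\|_{H^1(\R)},
\]
together with the dual estimate for the Duhamel term. The main obstacle — and what makes DNLS genuinely harder than a pure-power nonlinearity — is the derivative loss in the terms $|u|^2\partial_x u$ and $u^2\partial_x\bar u$. The $L^4_I W^{1,\infty}_x$ norm is engineered to absorb exactly this loss: two factors of $u$ go into $L^4_I L^\infty_x$, one factor of $\partial_x u$ goes into $L^\infty_I L^2_x$, and a short-time factor $|I|^{1/2}$ arises from H\"older in time, e.g.
\[
\bigl\|\,|u|^2\partial_x u\,\bigr\|_{L^1_I L^2_x}\leq |I|^{1/2}\,\|u\|_{L^4_I L^\infty_x}^{2}\,\|\partial_x u\|_{L^\infty_I L^2_x}.
\]
Applying $\partial_x$ once more to close the $L^\infty_I H^1_x$ component produces a pentalinear expression handled by the product rule and the same norms; the quintic term $|u|^4u$ is milder and treated similarly. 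This yields a unique fixed point on $[t_0-T,t_0+T]$ and, by Lipschitz dependence of the fixed point on the data in the $X_I$-metric, the continuity of $u_0\mapsto u$.

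To obtain the maximal-lifespan interval $I$, I would iterate the local theory as long as $\|u(t)\|_{H^1}$ stays finite. Property~(1) (openness of $I$) is immediate because the local existence time $T$ depends only on the $H^1$-norm. Property~(3) (blow-up criterion) follows by contradiction: if $\sup I<\infty$ but $\|\partial_x u(t_n)\|_{L^2}$ stays bounded along a sequence $t_n\uparrow\sup I$, a local solution started at $t_n$ would extend past $\sup I$, contradicting maximality. For property~(2) (conservation), I would first verify the identities formally by multiplying \eqref{DNLS} by $\bar u$, $\partial_x\bar u$, and $\partial_t\bar u$ and integrating by parts in $x$, then justify the computation rigorously by approximating $u_0$ by smooth data and invoking the continuous dependence just proved.

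Finally, for property~(4), the small-data global existence follows from combining mass conservation with the coercivity of the energy on $\dot H^1$ via the sharp $1$-D Gagliardo--Nirenberg inequality $\|u\|_{L^6}^6\lesssim\|u\|_{L^2}^4\|\partial_x u\|_{L^2}^2$: for $\|u_0\|_{H^1}$ sufficiently small one obtains $\tfrac12\|\partial_x u(t)\|_{L^2}^2(1-C\|u_0\|_{L^2}^4)\leq E(u_0)$, so the $H^1$-norm is uniformly bounded in time and the blow-up alternative in~(3) is never triggered, hence $I=\R$.
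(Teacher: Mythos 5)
Your overall architecture (Duhamel formula plus contraction in $C_tH^1\cap L^4_tW^{1,\infty}$, then the blow-up alternative, conservation laws by approximation, and small-data global existence via the sharp Gagliardo--Nirenberg inequality) is the right skeleton, and the last three steps are standard and fine. But the core local-existence step has a genuine gap: the contraction does not close at the $H^1$ level as you describe. Your displayed estimate controls $\||u|^2\partial_x u\|_{L^1_IL^2_x}$, which only closes the $L^\infty_IL^2_x$ component. To close the $L^\infty_I\dot H^1_x$ and $L^4_IW^{1,\infty}_x$ components you must estimate $\partial_x\bigl(|u|^2\partial_x u\bigr)$ and $\partial_x\bigl(u^2\partial_x\bar u\bigr)$ in a dual Strichartz norm, and these contain the terms $|u|^2\partial_x^2u$ and $u^2\overline{\partial_x^2u}$: two derivatives fall on a single factor of $u$, and no distribution of the remaining factors into $L^4_IL^\infty_x$ or $L^\infty_IL^2_x$ avoids needing $\partial_x^2u\in L^2_x$, i.e.\ $H^2$ data. (The algebraic cancellation $\tfrac12 i|u|^2\partial_xu-\tfrac12 iu^2\partial_x\bar u=-u\,\Im(\bar u\,\partial_xu)$ kills the harmless term $\Im(\overline{\partial_xu}\,\partial_xu)$ after differentiation, but not $u\,\Im(\bar u\,\partial_x^2u)$.) This derivative loss is exactly the known obstruction for DNLS, and it is why the paper quotes the theorem from Hayashi--Ozawa rather than proving it by a direct fixed point.

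What Hayashi and Ozawa actually do---and what your proposal is missing---is a further gauge transformation \emph{before} running the contraction: setting $\phi=e^{i\lambda\int_{-\infty}^x|u|^2dy}\,u$ and $\psi=e^{i\lambda\int_{-\infty}^x|u|^2dy}\,\partial_xu$ for a suitable $\lambda$, the pair $(\phi,\psi)$ solves a coupled system of Schr\"odinger equations whose nonlinearities contain no derivatives of the unknowns at top order; the contraction in $C_tL^2\cap L^4_tL^\infty$ for that system then closes by exactly the H\"older bookkeeping you wrote down, and one recovers $u$ and its $H^1$ bound by inverting the gauge. Without this device (or an alternative such as Bourgain spaces or local smoothing estimates), the fixed-point scheme as you have set it up fails at the first iteration, so the proposal as written does not establish the theorem.
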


The sharp local well-posedness result in $H^s, s\geq 1/2$ is due to
Takaoka \cite{Ta-99} by using Bourgain's space. The sharpness is
shown in \cite{Ta-01} in the sense that nonlinear evolution
$u(0)\mapsto u(t)$ fails to be $C^3$ or even uniformly $C^0$ in this
topology, even when $t$ is arbitrarily close to zero and $H^s$ norm
of the data is small (see also Biagioni-Linares
\cite{BiLi-01-Illposed-DNLS-BO}).

After the sharp local wellposedness is obtained, there are two
aspects of global solution of \eqref{DNLS} to be concerned. One is
about the global wellposedness in the lower regularity space $H^s(\R)$ for some $s<1$,
another one is about the dynamics of the traveling waves in the
energy space $H^1(\R)$.

On one hand,  the global well-posedness is obtained for \eqref{DNLS}
in the energy space in \cite{Oz-96} under the smallness condition
\begin{align}\label{Cond:smalldata}
\|u_0\|_{L^2} < \sqrt{2 \pi},
\end{align}
the argument is based on the energy method (conservation of mass and
energy) together with the sharp Gagliardo-Nirenberg inequality
\cite{Wein}. This is improved by Takaoka \cite{Ta-01}, who proved
global well-posedness in $H^s$ for $s>32/33$ under the condition
\eqref{Cond:smalldata}. His argument is based on Bourgain's
restriction method, which separated the evolution of low frequencies
and of high frequencies of initial data and noticed that the
nonlinear evolution has $H^1$ regularity effect even for the rough
solution $u\in H^s$, $s<1$. In \cite{CKSTT-01, CKSTT-02}, I-team
used the ``I-method'' to show global well-posedness in $H^s, s>1/2$
under \eqref{Cond:smalldata}, I-team defined $Iu$ as a modified
 function, whose energy is nearly conserved in time by
capturing nonlinear cancellation in frequency space under the flow
\eqref{DNLS}. Later, Miao, Wu and Xu \cite{MiaoWX-2011} showed the
sharp global well-posedness in $H^{1/2}$ under
\eqref{Cond:smalldata} by using I-method together with the refined
resonant decomposition. For the global result, we can also refer to
the recent paper \cite{LiuPS:DNLS:ISM, Pelin:DNLS} by the inverse scattering
method.

On the other hand, it is known in \cite{KaupN:DNLS:soliton,
MiaoTX-2015, SaarloosH:PhyD} that \eqref{DNLS} has a two-parameter
family of the traveling waves with the form:
\begin{align}\label{def:tw:a}u(t,x) \triangleq & \; e^{i\omega t } \varphi_{\omega,c}(x-ct) \\
\triangleq &\; e^{i\omega t + \i
\frac{c}{2}(x-ct)}\phi_{\omega,c}(x-ct) \label{def:tw struct:a}
\end{align}where $(\omega,c)\in \R^2, \; c^2<4\omega$ and
\begin{align}\label{tw:rsol:a}\phi_{\omega,c}(x)=\left[\frac{\sqrt{\omega}}{4\omega-c^2}
\left\{\cosh\big(\sqrt{4\omega-c^2}x\big)-\frac{c}{2\sqrt{\omega}}\right\}\right]^{-1/2},
\end{align}
which is a positive solution of
\begin{align}\label{eq:rsol:a}\left(\omega-\frac{c^2}{4}\right)\phi - \partial^2_x \phi
-\frac{3}{16}\left|\phi\right|^4\phi = -\frac{c}{2}
\left|\phi\right|^2\phi.
\end{align}
In fact, by \eqref{DNLS}and \eqref{def:tw:a}, we know that the
solitary solution $ \varphi_{\omega,c}$ should satisfy the following
equation
\begin{align}\label{eq:tw}
\omega\varphi - \partial^2_x \varphi
-\frac{3}{16}\left|\varphi\right|^4\varphi = -ic\partial_x \varphi+
\frac12 i |\varphi|^2\partial_x \varphi - \frac12 i \varphi^2
\partial_x \overline{\varphi}.
\end{align}
In \cite{MiaoTX-2015}, the authors characterized all solutions to
\eqref{eq:tw} in the energy space, which corresponds to all
traveling waves to \eqref{DNLS}.

\begin{theorem}\label{Thm:Exist:TW}Let
\begin{equation*}
\phi_{\omega,c}(x)= \left\lbrace \aligned
\left[\frac{\sqrt{\omega}}{4\omega-c^2}
\left\{\cosh\big(\sqrt{4\omega-c^2}x\big)-\frac{c}{2\sqrt{\omega}}\right\}\right]^{-1/2},
&\; c^2 < 4\omega
\\
 2\sqrt{c}\cdot \left(c^2x^2+1\right)^{-1/2}, &\; c^2=4\omega,\; c>0.
\endaligned
\right.
\end{equation*}
Then the following results hold
\begin{enumerate}
\item For the subcritical case $c^2<4\omega$. $\varphi_{\omega,c}(x)=e^{i\frac{c}{2}x} \phi_{\omega,c}(x) $ is the unique
solution of \eqref{eq:tw} among nontrivial solution in $H^1(\R)$, up
to the phase rotation and spatial translation symmetries of
\eqref{eq:tw}.

\item For the critical case $c^2=4\omega, c>0$. $\varphi_{\omega,c}(x)=e^{i\frac{c}{2}x} \phi_{\omega,c}(x)$ is the unique
solution of \eqref{eq:tw} among nontrivial solution in $H^1(\R)$, up
to the phase rotation and spatial translation symmetries of
\eqref{eq:tw}.

\item  For the critical case $c^2=4\omega, c\leq0$ and the supercritical case $c^2>4\omega$. \eqref{eq:tw}
has no nontrivial solution in $H^1(\R)$.
\end{enumerate}
\end{theorem}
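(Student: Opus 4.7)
The plan is to reduce the complex ODE \eqref{eq:tw} to a real autonomous ODE by a phase-amplitude decomposition, and then to classify all $H^1$ profiles by a first integral argument. First, I would write any nontrivial $H^1$ solution in polar form $\varphi(x)=e^{i\theta(x)}\phi(x)$ with $\phi\ge 0$ (taking care of zeros of $\varphi$ by standard removable-point arguments, or by first proving that any $H^1$ solution is smooth and, if it vanishes at some $x_0$, necessarily vanishes identically via ODE uniqueness applied to the resulting first-order conservation law). Substituting into \eqref{eq:tw} and separating real and imaginary parts gives
\begin{align*}
\omega\phi + (\theta')^2\phi - \phi'' - \tfrac{3}{16}\phi^5 &= c\,\theta'\phi - \theta'\phi^3, \\
-\theta''\phi - 2\theta'\phi' &= -c\phi'.
\end{align*}
Multiplying the second equation by $\phi$ gives $\bigl(\theta'\phi^2\bigr)'=\tfrac{c}{2}(\phi^2)'$, and integrating with the $H^1$-decay condition $\phi(x)\to 0$ as $|x|\to\infty$ forces the integration constant to vanish, so $\theta'\equiv c/2$. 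Hence, up to the phase rotation symmetry, $\theta(x)=\tfrac{c}{2}x$, which justifies the ansatz in \eqref{def:tw struct:a} and reduces the problem to the real ODE \eqref{eq:rsol:a}.

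Next, I would analyse \eqref{eq:rsol:a} by the standard energy method: multiply by $\phi'$, integrate, and use the decay of $\phi$ and $\phi'$ at infinity to eliminate the integration constant, obtaining
\begin{equation*}
(\phi')^2 \;=\; \phi^2\Bigl(\mu + \tfrac{c}{4}\phi^2 - \tfrac{1}{16}\phi^4\Bigr),\qquad \mu:=\omega-\tfrac{c^2}{4}.
\end{equation*}
From this single identity one can read off all existence/nonexistence statements. In the supercritical regime $c^2>4\omega$ one has $\mu<0$, so the right-hand side is negative in a punctured neighbourhood of $0$; hence any nontrivial $\phi\ge 0$ vanishing at infinity would give a contradiction, proving part (3). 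In the critical case $c^2=4\omega$ with $c\le 0$, the bracket is $\tfrac{c}{4}\phi^2-\tfrac{1}{16}\phi^4\le 0$, which again forces $\phi\equiv 0$.

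In the remaining regimes, the right-hand side is nonnegative on a maximal interval $[0,\phi_*]$, where $\phi_*>0$ is the smallest positive root of the quartic $\mu+\tfrac{c}{4}\phi^2-\tfrac{1}{16}\phi^4$. Translating so that the maximum is attained at $x=0$, one has $\phi(0)=\phi_*$, $\phi'(0)=0$, and the ODE $\phi'=-\mathrm{sgn}(x)\,\phi\sqrt{\mu+\tfrac{c}{4}\phi^2-\tfrac{1}{16}\phi^4}$ is then solvable by separation of variables, the integral being computable in closed form via the substitution $\phi^2=1/\psi$ (subcritical) or $\phi^2=1/\psi$ with $\mu=0$ (critical), which converts it into an elementary integral of a rational function of $\cosh$. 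This produces exactly the profiles displayed in the statement, and the strict positivity together with the sign of $\phi'$ on each half line furnishes uniqueness modulo translation. Together with the phase-rotation freedom already used to normalise $\theta(0)$, this yields the uniqueness claimed in parts (1)–(2).

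The main obstacle I anticipate is the initial reduction to the polar form: justifying that a general nontrivial $H^1$ solution of \eqref{eq:tw} can be written as $e^{i\theta(x)}\phi(x)$ with a smooth, globally defined amplitude and phase. A clean way to handle this is to observe that solutions of \eqref{eq:tw} inherit smoothness from elliptic regularity, and that the conserved quantity $\mathrm{Im}(\bar{\varphi}\varphi')+\tfrac{c}{2}|\varphi|^2-\tfrac{1}{4}|\varphi|^4$ (obtained by pairing \eqref{eq:tw} with $\bar{\varphi}$ and taking imaginary part, then integrating) provides the algebraic relation that rules out zeros of $\varphi$ in the nontrivial case, after which $\theta$ and $\phi=|\varphi|$ are globally well-defined and as smooth as $\varphi$. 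Everything downstream then reduces to the explicit ODE analysis above.
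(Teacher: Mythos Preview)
Your approach is correct and more elementary than the one indicated in the paper. The theorem is not proved here but cited from \cite{MiaoTX-2015}; the remark following it describes that argument as first exploiting the structure \eqref{def:tw struct:a} to reduce \eqref{eq:tw} to the real scalar equation \eqref{eq:rsol:a}, and then treating \eqref{eq:rsol:a} variationally via the Nehari manifold and symmetric-decreasing rearrangement. You perform the same reduction---indeed you \emph{derive} the ansatz rather than assume it, via the phase conservation law---but then classify the $H^1$ solutions of \eqref{eq:rsol:a} by the explicit first integral $(\phi')^2=\phi^2\bigl(\mu+\tfrac{c}{4}\phi^2-\tfrac{1}{16}\phi^4\bigr)$ and separation of variables, rather than by a minimisation argument. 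Your route is shorter and yields the closed-form profiles directly; the paper's variational route has the payoff of simultaneously identifying $\varphi_{\omega,c}$ as a constrained minimiser of $J_{\omega,c}$, which is precisely the characterisation exploited in the subsequent remark for the global-existence criterion.

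One small correction: the pointwise conserved quantity you invoke should read $\Im(\bar\varphi\varphi')-\tfrac{c}{2}|\varphi|^2$; the $|\varphi|^4$ contribution you wrote does not appear, because the cubic-derivative terms on the right of \eqref{eq:tw} combine to $-\varphi\,\Im(\bar\varphi\varphi')$, which is real after multiplication by $\bar\varphi$. With this fix your nonvanishing argument goes through cleanly: setting $\psi=e^{-icx/2}\varphi$ gives $\Im(\bar\psi\psi')\equiv 0$, so $\psi$ and $\bar\psi$ have zero Wronskian as solutions of $-\psi''+V\psi=0$ with $V=\omega-\tfrac{c^2}{4}+\tfrac{c}{2}|\psi|^2-\tfrac{3}{16}|\psi|^4$, hence $\psi$ is real up to a constant phase and the polar decomposition is globally valid.
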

\begin{remark}The key observation to the above result is to make use of
the structure of solution to \eqref{eq:tw} according to
\eqref{def:tw struct:a}, it is equivalent to solve a semilinear ODE
with the Nehari manifold argument and the non-increasing
rearrangement technique \cite{AmbMal:NonAnal, BerestLions:NLS:GS,
LiebL:book, Willem:MinMax}. As a consequence of the variational
characterization of $\varphi_{\omega,c}(x)$, a sufficient condition
on the global wellposedness of the solution $u(t,x)$ to \eqref{DNLS}
was shown in $H^1$ in \cite{MiaoTX-2015}. That is,  if the initial
data $u_0\in H^1(\R)$ satisfies\footnote{Here the minimum action
functional $J_{\omega,c}$ and the scaling derivative functional
$K_{\omega,c}$ are defined in $H^1(\R)$ as following
\begin{align*}
J_{\omega,c}(\varphi)\triangleq &\; E(\varphi)+\omega M(\varphi)+c P(\varphi),\\
K_{\omega,c}\sts{\varphi}\triangleq &\;
\int\sts{\abs{\varphi_x}^2-\frac{3}{16}\abs{\varphi}^6+\omega\abs{\varphi}^2
- c\Im\sts{\overline{\varphi}\varphi_x}+\frac{c}{2}\abs{\varphi}^4
  }dx.
\end{align*}}
$ J_{\omega,c}\big(u_0\big) <  J_{\omega,c}(\varphi_{\omega,c}), \;
K_{\omega,c}\sts{u_0} \geq 0 $ for some $(\omega, c)$ with $c^2<
4\omega$ or $c^2=4\omega, c>0$, then the solution $u(t)$ to
\eqref{DNLS} exists globally in time. While there is no blowup result for
the solution $u(t)$ with $J_{\omega,c}\big(u_0\big) <
J_{\omega,c}(\varphi_{\omega,c}), \; K_{\omega,c}\sts{u_0} < 0 $ in
$H^1(\R)$ because of the lack of the effective Virial identity.
\end{remark}

In this paper, we will focus on the study of the stability of the
traveling waves in the energy space. For the subcritical case $c^2<
4\omega$, Colin and Ohta made use of the concentration compactness
argument to show the stability of the single traveling wave for
\eqref{DNLS} in \cite{ColinOhta-DNLS}, which extended the result in
\cite{GuoWu:orbitalStab}. It is noticed that Martel, Merle and Tsai
developed some powerful analytic approach to show the stability of
the multi-soliton wave for subcritical gKdV and NLS in
\cite{MartelMT:Stab:gKdV, MartelMT:Stab:NLS}, which is based on the
modulation analysis \cite{Wein:stab:SJMA, Wein:stab:CPAM},
perturbation theory, monotonicity formulas and the conservation
laws. For the orbital stability results, we can also refer to
\cite{Caz:NLS:book, CaL:NLS:stable, DikaM:Stab:BBM,
GrillSS:Stable:87, munoz:stable, Pave:book, Tao:survey}. Here we
will apply this analytic method to \eqref{DNLS} and obtain the
following results. First of all, we revisit the stability of the
single traveling wave for \eqref{DNLS} in the energy space.

\begin{theorem}\label{thm:stab:asln} For any $(\omega^0, c^0)\in \R^2$ with
$\sts{c^0}^2 < 4\omega^0 $, the traveling wave solution $
e^{i\omega^0 t } \varphi_{\omega^0,c^0}(x-c^0t)$ to \eqref{DNLS} is
orbitally stable in the energy space. That is, for any $\epsilon>0$,
there exists $\delta>0$ such that if $u_0\in H^1(\R)$ satisfies
$$\big\|u_0(\cdot)-
\varphi_{\omega^0,c^0}(\cdot-x^0)e^{i\gamma^0
}\big\|_{H^1(\R)}<\delta$$ for some $(x^0, \gamma^0)\in \R^2$, then
the solution $u(t)$ of \eqref{DNLS} exists globally in time and
satisfies
\begin{align*}
\sup_{t\geq0}\inf_{(y, \gamma)\in \R^2}\big\|u(t,\cdot)-
\varphi_{\omega^0,c^0}(\cdot-y)e^{i\gamma
}\big\|_{H^1(\R)}<\epsilon.
\end{align*}

\end{theorem}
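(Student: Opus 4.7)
The plan is to use the classical Lyapunov-functional / modulation scheme, which in the single-soliton case is the core of the Martel--Merle--Tsai analytic approach. The natural Lyapunov functional is
$$S_{\omega^0,c^0}(u) := E(u) + \omega^0 M(u) + c^0 P(u) = J_{\omega^0,c^0}(u),$$
which is conserved along the flow of \eqref{DNLS} and, by \eqref{eq:tw}, satisfies $S'_{\omega^0,c^0}(\varphi_{\omega^0,c^0}) = 0$. It will suffice to show that $S_{\omega^0,c^0}(u) - S_{\omega^0,c^0}(\varphi_{\omega^0,c^0})$, together with $M(u)-M(\varphi_{\omega^0,c^0})$ and $P(u)-P(\varphi_{\omega^0,c^0})$, controls the squared $H^1$-distance of $u$ from the orbit $\mathcal{M} = \{ e^{i\gamma}\varphi_{\omega^0,c^0}(\cdot - y) : (y,\gamma) \in \R^2 \}$.

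First I would set up the modulation. By the implicit function theorem, on a small $H^1$-neighborhood of $\mathcal{M}$ one can write uniquely
$$u(t,x) = e^{i\gamma(t)}\bigl[\varphi_{\omega^0,c^0}(x - y(t)) + \eta(t, x-y(t))\bigr],$$
with $C^1$ modulation parameters $\gamma(t), y(t)$ chosen so that $\eta(t)$ is $L^2$-orthogonal to the tangent directions $i\varphi_{\omega^0,c^0}$ and $\partial_x \varphi_{\omega^0,c^0}$ generated by the two continuous symmetries. It is convenient to first gauge away the factor $e^{ic^0 x/2}$ in $\varphi_{\omega^0,c^0}(x) = e^{ic^0 x/2}\phi_{\omega^0,c^0}(x)$ so that the stationary profile becomes the real positive function $\phi_{\omega^0,c^0}$ and the linearization decouples cleanly on the real and imaginary parts of $\eta$.

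The principal obstacle is the coercivity of the Hessian. Taylor expansion gives
$$S_{\omega^0,c^0}(u(t)) - S_{\omega^0,c^0}(\varphi_{\omega^0,c^0}) = \tfrac12 \bigl\langle L\, \eta(t), \eta(t)\bigr\rangle + O\bigl(\|\eta(t)\|_{H^1}^3\bigr),$$
where $L := S''_{\omega^0,c^0}(\varphi_{\omega^0,c^0})$ is a matrix Schr\"odinger-type operator with a two-dimensional kernel (spanned by the two symmetry directions) and typically a single negative direction. One must establish $\bigl\langle L\eta, \eta\bigr\rangle \gtrsim \|\eta\|_{H^1}^2$ for $\eta$ satisfying the two modulation orthogonalities, subject additionally to the two scalar constraints forced by conservation of $M$ and $P$ at leading order. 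The spectral/variational content of this coercivity is equivalent, via the standard duality, to positivity of the $2\times 2$ Hessian $d''(\omega^0, c^0)$ of $d(\omega,c) := S_{\omega,c}(\varphi_{\omega,c})$ in the subcritical regime $c^2 < 4\omega$ — the Vakhitov--Kolokolov / Grillakis--Shatah--Strauss condition. The sign can be checked directly from the explicit profile \eqref{tw:rsol:a}, as in \cite{ColinOhta-DNLS, GuoWu:orbitalStab}, and this is where most of the work lies.

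Finally I would close the bootstrap. After the initial choice of modulation parameters, $\|\eta(0)\|_{H^1} \lesssim \delta$, $|M(u_0) - M(\varphi_{\omega^0,c^0})| \lesssim \delta$, and $|P(u_0) - P(\varphi_{\omega^0,c^0})| \lesssim \delta$. Conservation of $M, P, E$ propagates these bounds in time, so the coercivity lemma combined with the Taylor expansion yields $\|\eta(t)\|_{H^1}^2 \lesssim \delta^2 + \|\eta(t)\|_{H^1}^3$ throughout the lifespan. A continuity argument then bootstraps to $\sup_{t \geq 0}\|\eta(t)\|_{H^1} \lesssim \delta$, which together with Theorem~\ref{Thm:LWP} forces global existence and delivers the stated orbital stability.
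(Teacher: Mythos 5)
Your overall scheme (conserved action functional, modulation, coercivity of the Hessian under orthogonality/constraint conditions, continuity bootstrap) is sound and close in spirit to the paper's, but you have misidentified the key spectral condition, and this is precisely the step you yourself flag as ``where most of the work lies.'' The coercivity of $\bigl\langle L\eta,\eta\bigr\rangle$ on the constrained subspace is \emph{not} equivalent to positive definiteness of $d''(\omega^0,c^0)$. Here $\mathcal{L}_+$ has exactly one negative eigenvalue while $\mathcal{L}_-\geq 0$ (since $\mathcal{L}_-\phi_{\omega,c}=0$ with $\phi_{\omega,c}>0$), so the Grillakis--Shatah--Strauss counting requires $d''$ to have exactly \emph{one} positive eigenvalue; for a nondegenerate $2\times2$ symmetric matrix this means $\det d''(\omega^0,c^0)<0$, i.e.\ $d''$ is \emph{indefinite}. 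This is exactly condition \eqref{nondeg}, and it is what the proof of Proposition \ref{prop:coer:arf} in Appendix \ref{app:coer:arf} actually uses: one chooses $(\xi_1,\xi_2)$ with $(\xi_1,\xi_2)\,d''(\omega^0,c^0)\,(\xi_1,\xi_2)^{T}>0$ to build a test direction $p_0$ on which $\mathcal{L}_+$ is not too positive, and this is what absorbs the single negative direction of $\mathcal{L}_+$. If you set out to verify positive definiteness of $d''$ from the explicit profile \eqref{tw:rsol:a}, you would find it fails, and the duality argument as you have stated it would not close.

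A secondary, structural difference: you modulate only $(y,\gamma)$, keep $(\omega^0,c^0)$ frozen, and replace the two missing orthogonality conditions by the approximate constraints coming from conservation of $M$ and $P$ (which hold only up to $O(\delta)+O(\|\eta\|_{H^1}^2)$ errors, since the data need not lie on the level set of $\varphi_{\omega^0,c^0}$). The paper instead modulates all four parameters $(\omega(t),c(t),x(t),\gamma(t))$, imposes the four exact orthogonality conditions \eqref{asln:orth:nondeg}--\eqref{asln:orth:sym} needed for Proposition \ref{prop:coer:asln}, and then uses mass/momentum conservation together with the invertibility of $d''$ to show separately that $\abs{\omega(t)-\omega(0)}+\abs{c(t)-c(0)}\lesssim \normhone{\varepsilon(t)}^{2}+\normhone{\varepsilon(0)}^{2}$. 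Your two-parameter version is the classical Weinstein/GSS route and can be made rigorous, but only after the sign condition above is corrected; as written, the coercivity step fails.
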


\begin{remark}
\begin{enumerate}
\item About the dynamics of the solution to \eqref{DNLS} around the traveling waves, the above result
extends the results in \cite{MiaoTX-2015, Wu-DNLS}.

\item As for the
critical parameters $c^2=4\omega, c>0$, we don't know whether the
corresponding traveling waves for \eqref{DNLS} in \cite{MiaoTX-2015}
is stable or not since there is in lack of the spectral gap for the
spectrum of the linearized operator around the traveling waves.

\end{enumerate}
\end{remark}

Secondly, we can show the stability of the sum of two traveling
waves for \eqref{DNLS} in the energy space when the centers of two
traveling waves are always far away (that is, weak interaction) from
each other. That is,

\begin{theorem}\label{thm:stab:tsln}
Let $(\omega^0_k, c^0_k)\in \R^2$, $k=1,2$ satisfy
\begin{enumerate}
\item[$(a)$] Nonlinear stability of each wave: $\left(c^0_k\right)^2 < 4
\omega^0_k$ for $k=1, 2$.

\item[$(b)$] Forward propagation of each wave: $0<c^0_1<c^0_2$.

\item[$(c)$] Relative speed: $ \max(c^0_1, 0) <
2\;\frac{\omega^0_2-\omega^0_1}{c^0_2-c^0_1}<c^0_2$.
\end{enumerate}
Then there exists $C, \delta_0, \theta_0, L_0>0$, such that if
$0<\delta<\delta_0,\; L>L_0$ and
\begin{align*}
\left\|u_0(\cdot)-\sum^2_{k=1}\varphi_{\omega^0_k,
c^0_k}(\cdot-x^0_k)e^{i\gamma^0_k}\right\|_{H^1(\R)} \leq \delta,
\end{align*}
with $ x^0_2-x^0_1>L$, then the solution $u(t)$ of \eqref{DNLS}
exists globally in time and there exist $\mathcal{C}^1$ functions
$x_k(t)$ and $\gamma_k(t)$, $k=1,2$ such that for any $t\geq 0$,
\begin{align*}
\left\|u(t,\cdot)-\sum^2_{k=1}\varphi_{\omega^0_k,
c^0_k}(\cdot-x_k(t))e^{i\gamma_k(t)}\right\|_{H^1(\R)} \leq
C\left(\delta+e^{-\theta_0 \frac{L}{2}}\right).
\end{align*}
\end{theorem}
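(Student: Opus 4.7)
I would follow the Martel--Merle--Tsai analytic scheme \cite{MartelMT:Stab:gKdV, MartelMT:Stab:NLS} adapted to \eqref{DNLS}. The plan is a bootstrap argument. Let $T^*$ be the largest time such that on $[0,T^*)$ one can decompose
\[
u(t,x)=\sum_{k=1}^{2} e^{i\gamma_k(t)}\varphi_{\omega^0_k,c^0_k}(x-x_k(t)) + \varepsilon(t,x),
\]
with $\|\varepsilon(t)\|_{H^1(\R)}\le K\bigl(\delta+e^{-\theta_0 L/2}\bigr)$ and $x_2(t)-x_1(t)\ge L/2$, for some large constant $K$. The goal is to improve this bootstrap by a factor $1/2$; then $T^*=+\infty$ follows and, together with Theorem~\ref{Thm:LWP}, one obtains global existence.

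\textbf{Modulation.} First, an implicit function theorem in a tubular neighbourhood of the two-soliton manifold provides $\mathcal{C}^1$ parameters $x_k(t),\gamma_k(t)$ together with four orthogonality conditions on $\varepsilon$, obtained by pairing $\varepsilon$ against the generators $i\varphi_{\omega^0_k,c^0_k}$ and $\partial_x\varphi_{\omega^0_k,c^0_k}$ of the phase and translation symmetries at each soliton (in the appropriate $e^{ic^0_k x/2}$-gauge). These orthogonalities give modulation estimates
\[
|\dot x_k(t)-c^0_k|+|\dot\gamma_k(t)-\omega^0_k|\lesssim \|\varepsilon(t)\|_{H^1(\R)}+e^{-\theta_0 L/2},
\]
where the last term encodes the exponentially small interaction tails between the two well-separated profiles.

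\textbf{Localized conservation laws and Lyapunov functional.} Choose an intermediate speed $\sigma\in(c^0_1,c^0_2)$ and a smooth cutoff $\psi$ with $\psi(-\infty)=0,\psi(+\infty)=1$, transported at speed $\sigma$ between the two waves via $m_\sigma(t)=m_\sigma(0)+\sigma t$, and set
\begin{align*}
\mathcal{M}_\sigma(t)&=\tfrac{1}{2}\int |u(t,x)|^2\,\psi(x-m_\sigma(t))\,dx,\\
\mathcal{P}_\sigma(t)&=-\tfrac{1}{2}\Im\!\int \bar u\, u_x\,\psi(x-m_\sigma(t))\,dx+\tfrac{1}{8}\int |u|^4\,\psi(x-m_\sigma(t))\,dx.
\end{align*}
Computing $\tfrac{d}{dt}\mathcal{M}_\sigma$ and $\tfrac{d}{dt}\mathcal{P}_\sigma$ via \eqref{DNLS} and exploiting exponential decay of the profiles together with $\sigma-c^0_1>0,\ c^0_2-\sigma>0$ yields one-sided almost-monotonicity up to $O(e^{-\theta_0 L/2})$. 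Combined with the global conservation of $M$ and $P$, this produces an almost-conserved Lyapunov functional
\[
\mathcal{F}(t)=E(u(t))+\omega^0_1 M(u(t))+c^0_1 P(u(t))+(\omega^0_2-\omega^0_1)\mathcal{M}_\sigma(t)+(c^0_2-c^0_1)\mathcal{P}_\sigma(t),
\]
whose coefficients $\omega^0_2-\omega^0_1>0$ and $c^0_2-c^0_1>0$ (forced by (b)--(c)) match the monotonicity direction of the localized quantities.

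\textbf{Coercivity and closing the bootstrap.} Expanding $\mathcal{F}(t)$ around the two-soliton background and using that each $\varphi_{\omega^0_k,c^0_k}$ is a critical point of $J_{\omega^0_k,c^0_k}$, one obtains
\[
\mathcal{F}(t)-\mathcal{F}_\infty=\sum_{k=1}^{2}\tfrac{1}{2}\langle \mathcal{L}_k\varepsilon,\varepsilon\rangle_k+O\bigl(\|\varepsilon\|_{H^1}^3\bigr)+O\bigl(e^{-\theta_0 L/2}\bigr),
\]
with $\mathcal{L}_k$ the Hessian of $J_{\omega^0_k,c^0_k}$ at $\varphi_{\omega^0_k,c^0_k}$ and $\langle\cdot,\cdot\rangle_k$ a $\psi$-localized pairing. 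The spectral analysis underlying Theorem~\ref{thm:stab:asln} (subcritical condition $(c^0_k)^2<4\omega^0_k$ and the Vakhitov--Kolokolov-type positivity of the determinant of $\partial_{(\omega,c)}(M,P)(\varphi_{\omega,c})$) provides a coercivity estimate for each $\mathcal{L}_k$ modulo the four orthogonality conditions and the two scalar directions $(\langle\varepsilon,\varphi_k\rangle,\langle\varepsilon,i\partial_x\varphi_k\rangle)$. These last two scalars per wave are controlled precisely by the almost-conservation of $\mathcal{M}_\sigma,\mathcal{P}_\sigma$ combined with global conservation of $M,P$. Combining these ingredients yields $\|\varepsilon(t)\|_{H^1}\lesssim \delta+e^{-\theta_0 L/4}$, which strictly improves the bootstrap and completes the argument.

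\textbf{Main obstacle.} The most delicate step is the monotonicity of $\mathcal{P}_\sigma(t)$: differentiating in $t$ produces flux terms involving the derivative nonlinearity $\Im(\bar u u_x)$ and $|u|^4$, and the favourable sign must be enforced by the precise numerical condition (c) $\max(c^0_1,0)<2(\omega^0_2-\omega^0_1)/(c^0_2-c^0_1)<c^0_2$, which simultaneously selects an admissible $\sigma$ and compensates for the absence of Galilean invariance available in the NLS case of \cite{MartelMT:Stab:NLS}. The second technical point is coercivity of $\mathcal{L}_k$: the derivative nonlinearity generates cross terms between $\varepsilon$ and $\partial_x\varepsilon$ in the linearized quadratic form, so the $e^{ic^0_k x/2}$-gauge and the phase/translation orthogonalities must be used together to diagonalize and reduce $\mathcal{L}_k$ to a scalar Schr\"odinger-type spectral problem, exactly as in the single-wave analysis behind Theorem~\ref{thm:stab:asln}.
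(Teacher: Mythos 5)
Your overall scheme (bootstrap, modulation, a localized mass--momentum functional transported at an intermediate speed $\sigma$, one-sided monotonicity, coercivity of the linearized action) is exactly the Martel--Merle--Tsai strategy the paper follows, and you correctly identify that hypothesis $(c)$ is what makes $\sigma=2(\omega^0_2-\omega^0_1)/(c^0_2-c^0_1)$ an admissible speed in $(\max(c^0_1,0),c^0_2)$. The gap is in how you handle the non-coercive directions. For DNLS the soliton family is two-parameter and $\det d''(\omega,c)<0$, so $\mathcal{L}_+$ has a negative eigenvalue and positivity requires orthogonality to $\phi_{\omega,c}$, $\phi_{\omega,c}^3$ and $\partial_x\phi_{\omega,c}$ (Proposition \ref{prop:coer:arf}); at the level of $\varepsilon$ this forces \emph{four} orthogonality conditions per wave, \eqref{tslns:orth:nondeg}--\eqref{tslns:orth:sym}, and the two conditions $\Re\int R_k\overline{\varepsilon}=0$, $\Re\int(\i\partial_xR_k+\tfrac12|R_k|^2R_k)\overline{\varepsilon}=0$ can only be imposed by modulating $\omega_k(t)$ and $c_k(t)$ as well. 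You freeze $(\omega_k,c_k)=(\omega^0_k,c^0_k)$ and propose to control the two leftover scalar projections per wave by almost-conservation of $\mathcal{M}_\sigma$ and $\mathcal{P}_\sigma$. This fails as stated for two reasons. First, $\mathcal{M}_\sigma$ and $\mathcal{P}_\sigma$ are not separately almost-monotone; only specific linear combinations of localized mass and momentum (momentum in the frame moving at the cutoff speed) are, which is precisely why the paper works with $\mathfrak{Q}$, $\mathfrak{Q}_{\pm,0}$, $\mathfrak{Q}_{0,\pm}$ rather than with the localized mass and momentum individually. Second, and more fundamentally, monotonicity across a single moving line yields only \emph{one-sided} bounds, whereas the quadratic form penalizes the \emph{square} of each projection, so two-sided control is indispensable; global conservation of $M$ and $P$ merely converts an upper bound on the right wave's localized quantities into a lower bound on the left wave's (and vice versa), never the complementary bounds. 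The trivial bound $|\langle\varepsilon,\varphi_k\rangle|\le\|\varepsilon\|_{L^2}\|\varphi_k\|_{L^2}$ from the bootstrap hypothesis does not help either, since its square reintroduces the bootstrap constant and the loop does not close.

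The paper resolves this by (i) modulating all eight parameters so that the localized quadratic form is fully coercive with no residual directions (Lemma \ref{lem:tsln:coer}), and (ii) introducing four additional monotone functionals at the speeds $\sigma_{\pm,0},\sigma_{0,\pm}$ of \eqref{tw:relspds}; the differences $\mathfrak{Q}_{\pm,0}-\mathfrak{Q}$ and $\mathfrak{Q}_{0,\pm}-\mathfrak{Q}$ isolate $M(R_2(t))$ and $P(R_2(t))$ with coefficients of \emph{both} signs, giving the two-sided sandwiches \eqref{est:k_mass:bdd} and \eqref{est:k_mom:bdd}, hence the refined parameter estimates of Lemma \ref{lem:k_para:ref}, which are then fed back into the energy expansion. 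Your sketch is missing this mechanism (or an equivalent collection of monotone quantities at several speeds), so the bootstrap does not close as written.
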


\begin{remark}
\begin{enumerate}
\item Since the traveling wave with $\left(c^0_k\right)^2<4\omega^0_k$ is orbital stable
by Theorem \ref{thm:stab:asln}, we call the condition
$\left(c^0_k\right)^2<4\omega^0_k$ {\it nonlinear stable condition}.

\item
About assumption $(b)$: Forward propagation of each wave. It is a
special case for the two-traveling wave without collisions, the
arguments can be extended to other cases without collisions: $(I):
c^0_1<0<c^0_2$ (The right one propagates forward, the left one
propagate backward). From the view of physics, it is more stable
since two traveling waves propagate in different directions. $(II):
c^0_1<c^0_2<0$ (Backward propagation of each wave). It is the same
as the case we consider by symmetry. In fact, it is more easier than
the case we consider since we needn't modulate the speed parameter
$c_k$ in the modulation analysis.

\item
About assumption $(c)$: Relative speed. Compared with the single traveling wave case, the
multi-traveling wave case is more technical. In fact, after the
modulation analysis of the solution around the sum of two traveling
waves, we use the technical assumption $(c)$ for some monotonicity
formulas to show the localized action functional
$\mathfrak{E}\sts{u\sts{t}}$ is almost conserved, and obtain the
refined estimates of the parameters
$|\omega_k(t)-\omega_k(0)|+|c_k(t)-c_k(0)|$, $k=1, 2$. Please refer
Section \ref{sect:stab:tsln} to the details.

\item
About the the $k$-soliton case ($k\geq 3$),  Le Coz and Wu \cite{LeWu:DNLS} obtained the
stability of a $k$-soliton solution of (DNLS) independently a few months after we submitted our paper. The main difference is that Le Coz and Wu considered the $\frac12$-gauge transformed version of \eqref{DNLS-a}, whereas our
paper is based on the form \eqref{DNLS}, which corresponds to the $\frac34$-gauge transformed version of \eqref{DNLS-a}.
\end{enumerate}
\end{remark}

At last, the paper is organized as following.  In Section
\ref{sect:lin oper},  we introduce the linearized operator around
single traveling wave, show the coercivity property of the
linearized energy and obtain the geometric decomposition of the
solution around single traveling wave. In Section
\ref{sect:stab:asln}, inspired by the ideas in
\cite{MartelMT:Stab:NLS}, on one hand, we introduce a conserved
functional, which is related to the mass, momentum and energy, to
obtain a refined estimate about the remainder term in the modulation
analysis (geometric decomposition) of the solution. On the other
hand, we use the conservation laws of mass and momentum to refine
the estimate of the parameters $|\omega(t)-\omega(0)|+|c(t)-c(0)|$.
Together with the continuity argument, these refined estimates imply
Theorem \ref{thm:stab:asln}. In Section \ref{sect:decomp:tsln}, we
give the modulation analysis of the solution around the sum of two
traveling waves with weak interactions. In Section \ref{sect:mono
forms}, we introduce some extra monotonicity formulas and their
dynamics. In Section \ref{sect:stab:tsln}, on one hand, we introduce
a localized action functional, which is almost conserved by the
monotonicity formula and the conservation laws of mass, momentum and
energy, to refine the estimate about the remainder term in the
modulation analysis of the solution. On the other hand, we use some
monotonicity formulas to refine the estimates of
$|\omega_k(t)-\omega_k(0)|+|c_k(t)-c_k(0)|$, $k=1, 2$ besides of the
conservation laws of mass and momentum. These refined estimates also
imply Theorem \ref{thm:stab:tsln} together with the continuity
argument.

In Appendix \ref{app:coer:arf}, we give the coercivity of the
quadratic term (i.e. the linearized energy) under the orthogonal
structures. In Appendix \ref{app:tsln:acfln}, we use the
perturbation theory to linearize the action functional of the
solution around the sum of two traveling waves. In Appendix
\ref{app:tsln:coer}, we show the coercivity properties of the
localized quadratic term under the orthogonal structures.


\section{Properties of the linearized operator around the traveling
wave}\label{sect:lin oper} In this section, we will describe some
basic properties of the traveling wave with the subcritical parameters
$c^2<4\omega$ for \eqref{DNLS}.

When $c^2 < 4\omega$, it is well known that \eqref{DNLS} has the
following kinds of traveling wave solutions (for instance, see
\cite{KaupN:DNLS:soliton, MiaoTX-2015, SaarloosH:PhyD})
\begin{align}\label{def:tw}
    u_{\omega,c}(t,x)
        \triangleq &  e^{i\omega t } \varphi_{\omega,c}(x-ct) \\
        \triangleq &  e^{i\omega t + \i \frac{c}{2}(x-ct)}\phi_{\omega,c}(x-ct)
\label{def:tw struct}
\end{align}
where
\begin{align}\label{tw:rsol}\phi_{\omega,c}(x)=\left[\frac{\sqrt{\omega}}{4\omega-c^2}
\left\{\cosh\big(\sqrt{4\omega-c^2}x\big)-\frac{c}{2\sqrt{\omega}}\right\}\right]^{-1/2},
\end{align}
which is the unique positive solution up to the symmetries of
\eqref{eq:rsol:a}.\footnote{For $(\omega,c)\in \R^2$, we have
characterized all solutions for \eqref{eq:tw} in Theorem
\ref{Thm:Exist:TW}.} By the concentration-compactness argument in
\cite{ColinOhta-DNLS}, we know that the traveling waves
\eqref{def:tw} with $c^2 < 4\omega$ are orbitally stable. It is
worth noticing that the condition $c^2 < 4 \omega$ implies the
following non-degenerate condition \cite{ColinOhta-DNLS}
\begin{align}\label{nondeg}
  \det d''\sts{\omega,~c} &<0,
\end{align}
where $ d\sts{\omega,~c}
        \triangleq
    J_{\omega,c}\sts{\varphi_{\omega,c}}
       =     E\sts{\varphi_{\omega,c}}+\omega M\sts{\varphi_{\omega,c}}+c P\sts{\varphi_{\omega,c}}$.
In fact, since $\varphi_{\omega,c}$ is the critical point of the
action functional $ J_{\omega,c}(\varphi)$ in \cite{MiaoTX-2015}, we
have
\begin{align*}
    \partial_{\omega}d\sts{\omega, c} =\mass{ \varphi_{\omega,c} },
    \quad
    \partial_{c}d\sts{\omega,c} =\momentum{ \varphi_{\omega,c} },
\end{align*}
and
\begin{align*}
    d''\sts{\omega, c} = \begin{pmatrix}
                              \partial_{\omega}\mass{ \varphi_{\omega,c} } & \partial_{c}\mass{ \varphi_{\omega,c} } \\
                              \partial_{\omega}\momentum{ \varphi_{\omega,c} } & \partial_{c}\momentum{ \varphi_{\omega,c} } \\
                            \end{pmatrix}.
\end{align*}

\subsection{Linearized operator and its coercivity}\label{sect:coer}
We now consider the linearized operator around the traveling waves
and its coercivity in $H^1(\R)$ in this subsection.

Let $c^2 < 4 \omega$, we define the operators for any real valued
functions $\eta_1, \eta_2 \in H^1(\R)$
\begin{align*}
  \mathcal{L}_{+}\eta_1&\triangleq -\frac12 \partial^2_{x}\eta_1+\frac12\sts{ \omega-\frac{c^2}{4} } \eta_1+\frac{3}{4}c\,\phi_{\omega,c}^2\eta_1 - \frac{15}{32}\phi_{\omega,c}^4\eta_1,
\\
  \mathcal{L}_{-}\eta_2&\triangleq -\frac12\partial^2_{x}\eta_2+\frac12\sts{ \omega-\frac{c^2}{4} } \eta_2+\frac{1}{4}c\,\phi_{\omega,c}^2\eta_2 - \frac{3}{32}\phi_{\omega,c}^4\eta_2,
\end{align*}
and the quadratic form for any complex valued functions $\eta=\eta_1
+ \i \eta_2 \in H^1(\R)$ \begin{align*} \widetilde{
\mathcal{H}}_{\omega,c} \bilin{\eta}{\eta} \triangleq &
\bilin{\mathcal{L}_{+} \eta_1}{\eta_1}+ \bilin{\mathcal{L}_{-}
\eta_2}{\eta_2}
\\ =& \int
    \frac12\abs{\partial_x \eta}^2+\frac12\sts{\omega-\frac14
    c^2}\abs{\eta}^2\; \dx \\
    & + \int \left(
    -\frac{3}{32}\phi^4_{\omega,c}\abs{\eta}^2 -\frac38 \phi^4_{\omega,c}\abs{\eta_1}^2
    + \frac14 c  \phi^2_{\omega,c}\abs{\eta}^2 +\frac12 c
    \phi^2_{\omega,c}\abs{\eta_1}^2 \right)\; \dx.
\end{align*}

The next result shows that  for the subcritical case, the coercivity
of the linearized operator around the traveling wave holds under
some orthogonal conditions in the energy space.
\begin{proposition} \label{prop:coer:arf} Assume that $c^2 < 4 \omega$, then the following results hold
\begin{enumerate}
\item If $\psi\in H^{1}\sts{\R}$ satisfies
$
    \bilin{\psi}{\phi_{\omega,c}}             = 0,
$
then there exists $C_1>0$ such that
  \begin{align*}
    \bilin{\mathcal{L}_{-} \psi}{\psi}\geq C_1 \normto{\psi}^2.
  \end{align*}
\item If $\psi\in H^{1}\sts{\R}$ satisfies
$
    \bilin{\psi}{\phi_{\omega,c}}             = 0,\;
    \bilin{\psi}{\phi_{\omega,c}^3}           = 0,\text{~and~}
    \bilin{\psi}{\partial_x\phi_{\omega,c}}   = 0,
$
then there exists $C_2>0$ such that
  \begin{align*}
    \bilin{\mathcal{L}_{+} \psi}{\psi}\geq C_2 \normto{\psi}^2.
  \end{align*}
  \end{enumerate}
\end{proposition}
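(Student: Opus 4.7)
The plan is to use a direct spectral analysis of the self-adjoint operators $\mathcal{L}_{\pm}$, exploiting the explicit form of $\phi_{\omega,c}$ together with the non-degeneracy condition $\det d''(\omega,c)<0$ supplied by \eqref{nondeg}.

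For part (1), I first observe that differentiating the profile equation \eqref{eq:rsol:a} yields $\mathcal{L}_{-}\phi_{\omega,c}=0$ directly from the definitions. Since $\phi_{\omega,c}>0$, Sturm--Liouville/ground-state theory on $\R$ identifies $\phi_{\omega,c}$ as the unique (up to scalar) ground state of $\mathcal{L}_{-}$ and $0$ as its lowest eigenvalue. The essential spectrum lies in $[\tfrac12(\omega-c^2/4),\infty)\subset(0,\infty)$, and because the potential decays exponentially only finitely many eigenvalues lie below this threshold. Hence on $\phi_{\omega,c}^{\perp}$ one has a strictly positive spectral gap $C_1>0$, which delivers the desired coercivity for any $\psi\in H^1(\R)$ with $\bilin{\psi}{\phi_{\omega,c}}=0$.

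For part (2), I would begin by differentiating \eqref{eq:rsol:a} in $x$, $\omega$, and $c$, yielding the three relations
\begin{align*}
\mathcal{L}_{+}\partial_x\phi_{\omega,c}=0,\qquad \mathcal{L}_{+}\partial_\omega\phi_{\omega,c}=-\tfrac12\phi_{\omega,c},\qquad \mathcal{L}_{+}\partial_c\phi_{\omega,c}=\tfrac{c}{4}\phi_{\omega,c}-\tfrac14\phi_{\omega,c}^{3}.
\end{align*}
The first identifies $\partial_x\phi_{\omega,c}\in\ker\mathcal{L}_{+}$. Because $\partial_x\phi_{\omega,c}$ has exactly one sign change, standard oscillation theory shows that $\mathcal{L}_{+}$ has exactly one simple negative eigenvalue $-\mu_0<0$ with positive eigenfunction $\chi_{-}$, while the rest of its spectrum lies in $(0,\infty)$.

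To conclude part (2), I would use the following minimization argument. Set
\begin{align*}
\mu\triangleq\inf\bigl\{\bilin{\mathcal{L}_{+}\psi}{\psi}:\psi\in H^1(\R),\ \normto{\psi}=1,\ \bilin{\psi}{\phi_{\omega,c}}=\bilin{\psi}{\phi_{\omega,c}^{3}}=\bilin{\psi}{\partial_x\phi_{\omega,c}}=0\bigr\}.
\end{align*}
Using standard compactness (exponential localization of $\phi_{\omega,c}$ through the potential terms) one obtains a minimizer $\psi^{*}$ satisfying
\begin{align*}
\mathcal{L}_{+}\psi^{*}=\mu\,\psi^{*}+\alpha\,\phi_{\omega,c}+\beta\,\phi_{\omega,c}^{3}+\gamma\,\partial_x\phi_{\omega,c}
\end{align*}
for Lagrange multipliers $\alpha,\beta,\gamma\in\R$. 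Pairing with $\partial_x\phi_{\omega,c}$ (which lies in $\ker\mathcal{L}_{+}$) and using the orthogonality conditions forces $\gamma=0$. Assuming toward a contradiction that $\mu\leq 0$, I pair the relation with $\partial_\omega\phi_{\omega,c}$ and $\partial_c\phi_{\omega,c}$; the three identities above convert the left-hand side into $L^2$-inner products of $\psi^{*}$ against $\phi_{\omega,c}$ and $\phi_{\omega,c}^{3}$ (which vanish by assumption), producing a homogeneous linear system for $(\alpha,\beta)$ whose coefficient matrix is, up to an explicit non-zero factor, exactly $d''(\omega,c)$ expressed via $\partial_\omega\mass{\varphi_{\omega,c}},\partial_c\mass{\varphi_{\omega,c}},\partial_\omega\momentum{\varphi_{\omega,c}},\partial_c\momentum{\varphi_{\omega,c}}$. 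Since $\det d''(\omega,c)<0$ by \eqref{nondeg}, the system has only the trivial solution $\alpha=\beta=0$, whence $\psi^{*}$ is a genuine eigenfunction of $\mathcal{L}_{+}$ with eigenvalue $\mu\leq0$. But the only such eigenfunctions are $\chi_{-}$ and $\partial_x\phi_{\omega,c}$, and neither can satisfy the orthogonality constraints (again by the non-degeneracy), giving the contradiction and hence $\mu>0$. A standard reduction from the $L^2$-bound to the $H^1$-bound completes the proof of coercivity.

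The main obstacle is the last step: correctly identifying that the two-by-two system for the Lagrange multipliers is controlled precisely by $d''(\omega,c)$, and showing that the negative eigenfunction $\chi_{-}$ cannot be simultaneously orthogonal to $\phi_{\omega,c}$ and $\phi_{\omega,c}^{3}$. The bookkeeping is delicate because of the cross term $\frac{c}{4}\phi_{\omega,c}-\frac14\phi_{\omega,c}^{3}$ in the image of $\mathcal{L}_{+}\partial_c\phi_{\omega,c}$, which mixes the two orthogonality directions in a way that is ultimately reconciled by the non-degeneracy condition.
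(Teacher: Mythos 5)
Part (1) of your argument is fine: $\mathcal{L}_{-}\phi_{\omega,c}=0$ indeed holds (it follows by substituting the profile equation \eqref{eq:rsol:a} directly into the definition of $\mathcal{L}_{-}$ — no differentiation is involved), and the positivity of $\phi_{\omega,c}$ together with the location of the essential spectrum gives the spectral gap on $\left\{\phi_{\omega,c}\right\}^{\perp}$; this is exactly the Weinstein argument the paper cites. Your identities $\mathcal{L}_{+}\partial_{\omega}\phi_{\omega,c}=-\tfrac12\phi_{\omega,c}$ and $\mathcal{L}_{+}\partial_{c}\phi_{\omega,c}=\tfrac{c}{4}\phi_{\omega,c}-\tfrac14\phi_{\omega,c}^{3}$ are also correct (the paper's \eqref{eq:stru} carries a harmless extra factor of $2$), and your spectral picture for $\mathcal{L}_{+}$ matches the paper's Step 1.

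The gap is in the contradiction step of part (2). Pairing $\mathcal{L}_{+}\psi^{*}=\mu\psi^{*}+\alpha\,\phi_{\omega,c}+\beta\,\phi_{\omega,c}^{3}$ with $\partial_{\omega}\phi_{\omega,c}$ gives
$0=\mu\bilin{\psi^{*}}{\partial_{\omega}\phi_{\omega,c}}+\alpha\bilin{\phi_{\omega,c}}{\partial_{\omega}\phi_{\omega,c}}+\beta\bilin{\phi_{\omega,c}^{3}}{\partial_{\omega}\phi_{\omega,c}}$, and similarly for $\partial_{c}\phi_{\omega,c}$. The terms $\mu\bilin{\psi^{*}}{\partial_{\omega}\phi_{\omega,c}}$ and $\mu\bilin{\psi^{*}}{\partial_{c}\phi_{\omega,c}}$ do not vanish: $\psi^{*}$ is orthogonal to $\phi_{\omega,c}$, $\phi_{\omega,c}^{3}$ and $\partial_{x}\phi_{\omega,c}$, but not to $\partial_{\omega}\phi_{\omega,c}$ or $\partial_{c}\phi_{\omega,c}$. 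So your linear system for $(\alpha,\beta)$ is homogeneous only when $\mu=0$; for $\mu<0$ you cannot conclude $\alpha=\beta=0$, and the contradiction collapses in precisely the case you most need to exclude. Repairing this requires a different mechanism. The paper's Appendix \ref{app:coer:arf} avoids minimizers altogether in the nonnegativity step: for an arbitrary $\psi$ in the constraint set one writes $\psi=a\chi+p$ with $p$ in the positive spectral subspace $P$, uses the orthogonality conditions together with \eqref{eq:stru} to obtain $\bilin{\mathcal{L}_{+}p}{p_{j}}=a\,a_{j}\lambda_{1}^{2}$ (with $a_{j}$, $p_{j}$ coming from decomposing $\partial_{\omega}\phi_{\omega,c}$ and $\partial_{c}\phi_{\omega,c}$), chooses $\xi$ with $\xi^{T}d''(\omega,c)\,\xi>0$ so that $p_{0}=\xi_{1}p_{1}+\xi_{2}p_{2}$ satisfies $\bilin{\mathcal{L}_{+}p_{0}}{p_{0}}<\sts{a_{1}\xi_{1}+a_{2}\xi_{2}}^{2}\lambda_{1}^{2}$, and then applies the Cauchy--Schwarz inequality for the positive form $\bilin{\mathcal{L}_{+}\cdot}{\cdot}$ on $P$ to get $\bilin{\mathcal{L}_{+}p}{p}\geq a^{2}\lambda_{1}^{2}$, hence $\bilin{\mathcal{L}_{+}\psi}{\psi}\geq 0$; strict positivity is then a separate normalized minimizing-sequence argument. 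If you insist on the Lagrange-multiplier route, you would instead have to carry out the resolvent/index analysis of $\lambda\mapsto\bigl[\bilin{(\mathcal{L}_{+}-\lambda)^{-1}v_{i}}{v_{j}}\bigr]$ with $v_{1}=\phi_{\omega,c}$, $v_{2}=\phi_{\omega,c}^{3}$ on the interval $(-\lambda_{1}^{2},0]$, which is considerably more work than your sketch suggests.
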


The proof of Proposition \ref{prop:coer:arf} follows from a standard
variational argument. We refer to Appendix \ref{app:coer:arf} for
the proof. As a consequence, we have
\begin{proposition}\label{prop:coer:asln}
Assume that $c^2 < 4 \omega$. Let $\eta\in H^{1}(\R)$ be such that $
    \bilin{\Im \eta}{\phi_{\omega,c}}             = 0,
$
and
  \begin{align*}
      \bilin{\Re \eta }{\phi_{\omega,c}}             = 0,\quad
    \bilin{\Re \eta}{\partial_x\phi_{\omega,c}}   = 0,\quad
    \frac{1}{2}\bilin{\Re \eta}{\phi_{\omega,c}^3}
        +\bilin{\Im \eta}{\partial_x\phi_{\omega,c}}        = 0,
  \end{align*}
then there exists $C>0$ such that
  \begin{align}
    \label{coer:asln}
   \widetilde{ \mathcal{H}}_{\omega, c}\bilin{\eta}{\eta}\geq C  \normhone{ \eta }^2.
  \end{align}
\end{proposition}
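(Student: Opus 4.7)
The plan is to deduce Proposition \ref{prop:coer:asln} from the two uncoupled coercivity statements of Proposition \ref{prop:coer:arf} through a finite-rank decomposition of $\Re\eta$ that absorbs the single coupled fourth orthogonality into the $\mathcal{L}_{-}$-coercivity on $\Im\eta$. For $\eta_2 := \Im\eta$, since $(\eta_2, \phi_{\omega,c}) = 0$, Proposition \ref{prop:coer:arf}(1) gives $(\mathcal{L}_{-}\eta_2, \eta_2) \geq C_1\|\eta_2\|_2^2$ immediately. For $\eta_1 := \Re\eta$, two of the three orthogonalities needed by Proposition \ref{prop:coer:arf}(2) are available, while the coupled hypothesis reads
\begin{align*}
(\eta_1, \phi_{\omega,c}^3) = -2(\eta_2, \partial_x\phi_{\omega,c}),
\end{align*}
which is of size $O(\|\eta_2\|_2)$. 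So the ``missing'' $\phi_{\omega,c}^3$-component of $\eta_1$ can be split off explicitly and paid for by the $\|\eta_2\|_2^2$ coercivity.

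Concretely, I would fix an auxiliary $\chi \in H^1(\R)$ with $(\chi, \phi_{\omega,c}) = (\chi, \partial_x\phi_{\omega,c}) = 0$ and $(\chi, \phi_{\omega,c}^3) = 1$ (existence is elementary since $\phi_{\omega,c}, \partial_x\phi_{\omega,c}, \phi_{\omega,c}^3$ are $L^2$-linearly independent), and write $\eta_1 = \tilde\eta_1 + \beta\chi$ with $\beta = (\eta_1, \phi_{\omega,c}^3) = -2(\eta_2, \partial_x\phi_{\omega,c})$. Then $\tilde\eta_1$ satisfies all three orthogonalities of Proposition \ref{prop:coer:arf}(2), so $(\mathcal{L}_{+}\tilde\eta_1, \tilde\eta_1) \geq C_2\|\tilde\eta_1\|_2^2$, and $|\beta|^2 \leq 4\|\partial_x\phi_{\omega,c}\|_2^2\|\eta_2\|_2^2$. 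Expanding
\begin{align*}
\widetilde{\mathcal{H}}_{\omega,c}(\eta, \eta) = (\mathcal{L}_{+}\tilde\eta_1, \tilde\eta_1) + 2\beta(\tilde\eta_1, \mathcal{L}_{+}\chi) + \beta^2 (\mathcal{L}_{+}\chi, \chi) + (\mathcal{L}_{-}\eta_2, \eta_2),
\end{align*}
I would derive an $L^2$-lower bound $\widetilde{\mathcal{H}}_{\omega,c}(\eta, \eta) \geq c_0 \|\eta\|_2^2$ provided the two residual terms can be absorbed. The $H^1$-bound \eqref{coer:asln} then follows from a standard interpolation: since $\mathcal{L}_\pm = -\tfrac{1}{2}\partial_x^2 + (\text{smooth bounded localized potential})$, one has $\widetilde{\mathcal{H}}_{\omega,c}(\eta,\eta) + A\|\eta\|_2^2 \geq \tfrac{1}{2}\|\partial_x\eta\|_2^2$ for some $A > 0$, and a convex combination with the $L^2$-coercivity closes the argument.

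The main obstacle is the absorption of the two residual terms $2\beta(\tilde\eta_1, \mathcal{L}_{+}\chi)$ and $\beta^2(\mathcal{L}_{+}\chi, \chi)$. The cleanest route is to select $\chi$ in $\spann\sts{\partial_\omega\phi_{\omega,c}, \partial_c\phi_{\omega,c}}$: differentiating \eqref{eq:rsol:a} in $\omega$ and $c$ gives the identities
\begin{align*}
\mathcal{L}_{+}(\partial_\omega\phi_{\omega,c}) = -\tfrac{1}{2}\phi_{\omega,c}, \qquad \mathcal{L}_{+}(\partial_c\phi_{\omega,c}) = \tfrac{c}{4}\phi_{\omega,c} - \tfrac{1}{4}\phi_{\omega,c}^3,
\end{align*}
so that $\mathcal{L}_{+}\chi$ automatically lies in $\spann\sts{\phi_{\omega,c}, \phi_{\omega,c}^3}$ and the cross term $(\tilde\eta_1, \mathcal{L}_{+}\chi) = (\mathcal{L}_{+}\tilde\eta_1, \chi)$ vanishes by the orthogonalities of $\tilde\eta_1$. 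The condition $(\chi, \partial_x\phi_{\omega,c}) = 0$ is then automatic by integration by parts, and $(\chi, \phi_{\omega,c}) = 0$ together with the normalization $(\chi, \phi_{\omega,c}^3) = 1$ reduces to a linear system in the two coefficients of $\chi$, whose solvability is controlled precisely by the non-degeneracy $\det d''(\omega, c) < 0$ (equivalent to $c^2 < 4\omega$). The remaining quantitative check is that the sign and size of the scalar $\beta^2 (\mathcal{L}_{+}\chi, \chi) = 4(\eta_2, \partial_x\phi_{\omega,c})^2 (\mathcal{L}_{+}\chi, \chi)$ are compatible with the coercivity constant $C_1\|\eta_2\|_2^2$; I expect this to reduce to an explicit computation expressing $(\mathcal{L}_{+}\chi, \chi)$ in terms of $M(\varphi_{\omega,c})$, $P(\varphi_{\omega,c})$, $\det d''$ and their $(\omega,c)$-derivatives, with the non-degeneracy again forcing the correct sign.
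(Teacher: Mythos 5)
Your skeleton is the same as the paper's: $\mathcal{L}_{-}$-coercivity for $\Im\eta$ is free from Proposition \ref{prop:coer:arf}(1), and the missing third orthogonality of $\Re\eta$ is a correction of size $O(\normto{\Im\eta})$ via the coupled condition. Your choice of $\chi\in\spann\{\partial_\omega\phi_{\omega,c},\partial_c\phi_{\omega,c}\}$, which forces $\mathcal{L}_+\chi\in\spann\{\phi_{\omega,c},\phi_{\omega,c}^3\}$ and kills the cross term $\bilin{\mathcal{L}_+\tilde\eta_1}{\chi}$, is correct (it is the same use of \eqref{eq:stru} that drives Appendix \ref{app:coer:arf}). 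The gap is in the final step, and it is not the routine check you hope for. Two concrete problems. First, the determinant of your $2\times 2$ system is $\det\bigl(\begin{smallmatrix}\bilin{\partial_\omega\phi}{\phi}&\bilin{\partial_c\phi}{\phi}\\ \bilin{\partial_\omega\phi}{\phi^3}&\bilin{\partial_c\phi}{\phi^3}\end{smallmatrix}\bigr)$; using $\int\phi^4=8P+4cM$ this equals $2\det d''(\omega,c)+M\,\partial_\omega M$, \emph{not} a multiple of $\det d''$, so its nonvanishing needs a separate check. Second, and more seriously, the sign is unfavorable: since $\bilin{\chi}{\phi_{\omega,c}}=0$ and $\bilin{\chi}{\phi_{\omega,c}^3}=1$, your identities give $\bilin{\mathcal{L}_+\chi}{\chi}=-b/4$ where $b$ is the $\partial_c\phi$-coefficient, and Cramer's rule gives $b=\partial_\omega M/(2\det d''+M\partial_\omega M)>0$ for $c>0$. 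So $\beta^2\bilin{\mathcal{L}_+\chi}{\chi}$ is a genuinely negative term of order $\normto{\eta_2}^2$, and your proposed absorption --- comparing the fixed constant $4\normto{\partial_x\phi_{\omega,c}}^2\abs{\bilin{\mathcal{L}_+\chi}{\chi}}$ against the $\mathcal{L}_-$-coercivity constant $C_1$ after Cauchy--Schwarz --- is strictly stronger than what is true and has no reason to hold; decoupling $\bilin{\eta_2}{\partial_x\phi}^2\leq\normto{\partial_x\phi}^2\normto{\eta_2}^2$ discards exactly the structure that makes the proposition work.

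What your route actually requires is coercivity of the rank-one perturbed form $\bilin{\mathcal{L}_-\eta_2}{\eta_2}-4\abs{\bilin{\mathcal{L}_+\chi}{\chi}}\bilin{\eta_2}{\partial_x\phi_{\omega,c}}^2$ on $\{\eta_2\perp\phi_{\omega,c}\}$, where $\mathcal{L}_-\geq 0$ with kernel $\spann\{\phi_{\omega,c}\}$. By the standard rank-one criterion this reduces to $4\abs{\bilin{\mathcal{L}_+\chi}{\chi}}\,\bilin{\mathcal{L}_-^{-1}\partial_x\phi}{\partial_x\phi}<1$; the identity $\mathcal{L}_-(x\phi_{\omega,c})=-\partial_x\phi_{\omega,c}$ gives $\bilin{\mathcal{L}_-^{-1}\partial_x\phi}{\partial_x\phi}=M$, so the condition is $\abs{b}M<1$, i.e. $(-M\partial_\omega M)/(-2\det d''-M\partial_\omega M)<1$, which indeed holds because $\det d''<0$. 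That computation is the missing core of your argument, not a constant-versus-constant comparison. For contrast, the paper never introduces $\chi$: it uses the penalized coercivity $\normto{\eta_1}^2\leq\tfrac1c\bilin{\mathcal{L}_+\eta_1}{\eta_1}+\widetilde C_1\bilin{\eta_1}{\phi^3}^2$, converts the penalty into $\normto{\eta_2}^2$, and then dominates it by $\tfrac{\widetilde C_2}{C_1}\bilin{\mathcal{L}_-\eta_2}{\eta_2}$, exploiting only the nonnegativity of the $\mathcal{L}_-$ form --- i.e.\ it bounds $\normto{\eta}^2$ \emph{above} by a (possibly large) positive combination of the two quadratic forms instead of trying to make the negative contribution small.
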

\begin{proof}
Let $\eta_1 = \Re \eta$ and  $\eta_2 = \Im \eta$. In order to prove
\eqref{coer:asln}, it suffices to show that
\begin{align}\label{coer:a}
  \widetilde{  \mathcal{H}}_{\omega, c}\bilin{\eta}{\eta}\geq C \normto{ \eta }^2.
\end{align}

On one hand, by Proposition \ref{prop:coer:arf} and
  \begin{align*}
    \bilin{\eta_1}{\phi_{\omega,c}}             = 0,\quad
    \bilin{\eta_1}{\partial_x\phi_{\omega,c}}   = 0,
  \end{align*}
there exist constants $c, \widetilde{C}_1>0$ such that
  \begin{align}
    \label{eq:1}
    \normto{\eta_{1}}^{2}
        \leq&
        \frac{1}{c }\bilin{\mathcal{L}_{+}\eta_{1}}{\eta_{1}}
        +\widetilde{C}_1 \bilin{\eta_{1}}{\phi_{\omega,c}^{3}}^{2}.
  \end{align}
 Now inserting
\begin{align*}
    \frac{1}{2}\bilin{\eta_1}{\phi_{\omega,c}^3}
        +\bilin{\eta_2}{\partial_x\phi_{\omega,c}}        = 0
\end{align*}
into \eqref{eq:1} and by the Cauchy-Schwarz inequality, we have
\begin{align}
    \normto{\eta_{1}}^{2}
        \leq&
        \frac{1 }{ c } \bilin{\mathcal{L}_{+}\eta_{1}}{\eta_{1}}
        +4 \widetilde{C}_1 \bilin{\eta_{2}}{ \partial_{x}\phi_{\omega,c} }^{2}
        \notag\\
    \leq& \frac{1 }{ c } \bilin{\mathcal{L}_{+}\eta_{1}}{\eta_{1}}
        +\widetilde{C}_2 \|\eta_2\|^2_{2}. \label{eq:2}
\end{align}

On the other hand, by Proposition \ref{prop:coer:arf} and
$\bilin{\eta_2}{\phi_{\omega,c}}  = 0,$ there exists a constant
$C_1>0$ such that
\begin{align}\label{eq:3}
 C_1 \; \normto{\eta_2}^2\leq \bilin{\mathcal{L}_{-}
 \eta_2}{\eta_2}.
\end{align}
Without loss of generality, we can choose $0<c\ll1$ with $c\left(1+\widetilde{C}_2\right)\leq C_1 $ by Proposition \ref{prop:coer:arf}. By \eqref{eq:2} and \eqref{eq:3}, we have

\begin{align*}
 \normto{\eta_{1}}^{2} +  \normto{\eta_2}^2  \leq & \frac{1 }{ c } \bilin{\mathcal{L}_{+}\eta_{1}}{\eta_{1}}
        +\widetilde{C}_2 \|\eta_2\|^2_{2} +\frac{1}{C_1} \bilin{\mathcal{L}_{-}
 \eta_2}{\eta_2}\\
        \leq &  \frac{1 }{ c } \bilin{\mathcal{L}_{+}\eta_{1}}{\eta_{1}}
        +\left(\frac{\widetilde{C}_2}{C_1} +\frac{1}{C_1} \right) \bilin{\mathcal{L}_{-} \eta_2}{\eta_2}
        \\
        \leq &
        \frac{1 }{ c } \left( \bilin{\mathcal{L}_{+}\eta_{1}}{\eta_{1}}
        + \bilin{\mathcal{L}_{-} \eta_2}{\eta_2} \right).
\end{align*}
This can complete the proof
of \eqref{coer:a} by the definition of $\widetilde{
\mathcal{H}}_{\omega, c}$.
\end{proof}

\subsection{Decomposition of the functions close to a traveling wave}\label{sect:decomp:asln}

Let $\omega^{0},$~$c^{0}$ satisfy $\sts{c^{0}}^{2} < 4\omega^{0}$. For $\alpha>0,$
we consider the following tube
of size $\alpha$ in $H^{1},$
\begin{align*}
  \mathcal{U}_1\sts{\alpha~,~\omega^{0}~,~c^{0}}\triangleq
  \set{
    u\in H^{1}(\R)~:~
    \inf_{(x, \gamma)\in\R^2}
    \normhone{
        u(\cdot)-\varphi_{\omega^{0}, c^{0}}\sts{\cdot-x}e^{\i\gamma}
    }
    <\alpha
  },
\end{align*}
which is close to some traveling wave with the subcritical parameter.
First, by the non-degenerate condition on $d''(\omega^0, c^0)$, we
have the following structure decomposition for the functions in the
above tube by the implicit function theorem.
\begin{lemma}
\label{lem:aslnst:decomp} There exists $\alpha_0>0, C_{I}>0$, such
that if $ u\in\mathcal{U}_1\sts{\delta,~\omega^{0},~c^{0}}$ with
$\delta<\alpha_0$, then there exist unique $\mathcal{C}^{1}$
functions $$\overrightarrow{q}_0\triangleq
\sts{~\omega_0~,~c_0~,~x_0~,~\gamma_0~} \in
\sts{0,~+\infty}\times\R^3$$ with $c_0^2 < 4\omega_0,$ such that
\begin{align}
&
        \Re\int
 R(x)\; \overline{\varepsilon(x)}
        \;\dx =0,
\quad
        \Re\int\sts{\i \partial_x R + \frac12
          \abs{R}^2R} (x)\; \overline{\varepsilon(x)}
        \;\dx =0, \label{orth:cons}
\\
 &   \Re\int \partial_x R(x)\; \overline{\varepsilon(x)}
        \;\dx =0,
 \quad
   \Re\int\i R(x)\;
           \overline{ \varepsilon(x)}
        \;\dx =0. \label{orth:syms}
\end{align}
where
\begin{align}
R(x)=& \; R\sts{\omega_0,c_0,x_0,\gamma_0 ; x}\triangleq
\varphi_{\omega_0,c_0}\sts{x-x_0}e^{\i\gamma_0}, \label{asln:stc}\\
     \varepsilon\sts{x}= &\; \varepsilon\sts{\omega_0,c_0,x_0,\gamma_0, u ; x}\triangleq  u\sts{x}- R\sts{\omega_0,c_0,x_0,\gamma_0 ;
     x}.
     \label{aslns:rem:def}
\end{align}
Moreover, we have
\begin{align*}
      \normhone{\varepsilon}+\abs{ \omega_0-\omega^{0} }+\abs{ c_0-c^{0} }
    &
        \leq C_{I}\;\delta.
\end{align*}
\end{lemma}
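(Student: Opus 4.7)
The proof is a routine application of the implicit function theorem (IFT) in the modulation variables $(\omega, c, x_0, \gamma)$. Given $u \in \mathcal{U}_1(\delta, \omega^0, c^0)$, I would first pick a translation/phase pair $(\bar x, \bar \gamma)$ nearly realizing the infimum in the definition of $\mathcal{U}_1$, so that $v(x) := u(x + \bar x) e^{-i\bar \gamma}$ satisfies $\|v - \varphi_{\omega^0, c^0}\|_{H^1} \leq 2\delta$. By translation- and phase-equivariance, the four orthogonality conditions \eqref{orth:cons}--\eqref{orth:syms} for $\varepsilon = u - R(\omega_0, c_0, x_0, \gamma_0; \cdot)$ are equivalent to the analogous conditions for $v - R(\omega_0, c_0, x_0 - \bar x, \gamma_0 - \bar\gamma; \cdot)$, so it suffices to solve the modulation problem for $v$ in a fixed $H^1$-neighborhood of $\varphi_{\omega^0, c^0}$.

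Introduce the $\mathcal{C}^1$ map $\mathcal{F} : \R^4 \times H^1(\R) \to \R^4$ whose components are the four left-hand sides of \eqref{orth:cons}--\eqref{orth:syms} with $\varepsilon$ replaced by $v - R(\omega, c, x, \gamma; \cdot)$. Then $\mathcal{F}(\omega^0, c^0, 0, 0, \varphi_{\omega^0, c^0}) = 0$, and the central task is to verify that the $4 \times 4$ Jacobian $J := \partial_{(\omega, c, x, \gamma)} \mathcal{F}$ at this reference point is invertible. Since $\varepsilon$ vanishes there, each entry has the form $-\Re \int f_i\,\overline{\partial_{q_j} R}$, where $f_i$ runs through the test functions $R,\, i\partial_x R + \tfrac{1}{2}|R|^2 R,\, \partial_x R,\, iR$ and $\partial_{q_j} R$ through $\partial_\omega R,\, \partial_c R,\, -\partial_x R,\, iR$. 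The key observation is that $J$ is block-diagonal. Its upper-left $2\times 2$ block (rows indexed by the mass and momentum test functions, columns by $\omega, c$) equals $-d''(\omega^0, c^0)$ after reading off $\partial_\omega M(R) = \Re\int R\,\overline{\partial_\omega R}$, $\partial_c P(R) = \Re\int(i\partial_x R + \tfrac{1}{2}|R|^2 R)\overline{\partial_c R}$, and the two analogous mixed entries. Its lower-right $2\times 2$ block $A$ has diagonal entries $\|\partial_x R\|_2^2$ and $-\|R\|_2^2$ and off-diagonal entries $\mp\tfrac{c^0}{2}\|\phi\|_2^2$, with determinant $-\|\partial_x \phi\|_2^2 \cdot \|\phi\|_2^2 < 0$ after using $\|\partial_x R\|_2^2 = \tfrac{(c^0)^2}{4}\|\phi\|_2^2 + \|\partial_x\phi\|_2^2$. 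The two off-diagonal $2\times 2$ blocks vanish by parity: writing $R = e^{i c^0 x/2}\phi$ with $\phi = \phi_{\omega^0, c^0}$ even in $x$, the $\omega, c$ derivatives of $\phi$ are also even, while each mixed integrand turns out either to be an odd function of $x$ or an exact $x$-derivative of a power $\phi^{2k}$, and hence integrates to zero. Combined with the non-degeneracy \eqref{nondeg}, this yields $\det J = \det d''(\omega^0, c^0) \cdot \det A \neq 0$.

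The IFT then produces, on some $H^1$-neighborhood of $\varphi_{\omega^0, c^0}$, unique $\mathcal{C}^1$ parameters $(\omega_0, c_0, x_0', \gamma_0')(v)$ solving $\mathcal{F} = 0$, together with the Lipschitz bound $|\omega_0 - \omega^0| + |c_0 - c^0| + |x_0'| + |\gamma_0'| \leq C_I \|v - \varphi_{\omega^0, c^0}\|_{H^1}$. Setting $x_0 := x_0' + \bar x$ and $\gamma_0 := \gamma_0' + \bar\gamma$, and bounding $\|\varepsilon\|_{H^1}$ by the triangle inequality together with the $H^1$-smoothness of $(\omega, c) \mapsto \varphi_{\omega, c}$, yields the asserted estimate. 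I expect the main---though elementary---technical step to be the parity-based block-diagonal analysis of $J$; everything else follows directly from the IFT and the non-degeneracy hypothesis on $d''(\omega^0, c^0)$.
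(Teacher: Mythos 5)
Your proposal is correct and follows essentially the same route as the paper: set up the four orthogonality functionals, compute the Jacobian in $(\omega,c,x_0,\gamma_0)$ at the reference traveling wave, identify its determinant as $-\|\partial_x\phi_{\omega^0,c^0}\|_2^2\,\|\phi_{\omega^0,c^0}\|_2^2\,\det d''(\omega^0,c^0)$, and invoke the implicit function theorem under the non-degeneracy \eqref{nondeg}. The only cosmetic differences are your preliminary translation/phase reduction (which in fact cleanly handles the uniformity of the IFT neighborhood that the paper leaves implicit) and your observation that the lower-left $2\times 2$ block also vanishes by parity; the paper only uses that the upper-right block is zero, which already makes the Jacobian block-triangular.
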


\begin{remark}
The orthogonal structures in \eqref{orth:cons} correspond to the
conservation laws of mass and momentum, while the orthogonal
structures in \eqref{orth:syms} correspond to the spatial translation invariance and the phase rotation
invariance .
\end{remark}
\begin{proof}[Proof of Lemma \ref{lem:aslnst:decomp}]

For any $u\in \mathcal{U}_1\sts{\delta, \omega^{0}, c^{0}},$ there
exists $(x^0, \gamma^0)\in\R^2$ such that
\begin{align*}
       \normhone{ u(\cdot)-\varphi_{\omega^{0},c^{0}}\sts{\cdot-x^0}e^{\i\gamma^0}}
    <\delta.
\end{align*}
Now let
\begin{equation*}\overrightarrow{q}^0=\sts{\omega^0,c^0,x^0,\gamma^0}, \; \text{and}\;
R^0(x)=\varphi_{\omega^0,c^0}\sts{x-x^0}e^{\i\gamma^0},
\end{equation*} and define
\begin{align*}
  \varrho_{1}\sts{\omega_0,c_0,x_0,\gamma_0, u}\triangleq &~
  \Re\int
    R\sts{\omega_0,c_0,x_0,\gamma_0 ; x}\;\overline{\varepsilon\sts{\omega_0,c_0,x_0,\gamma_0, u; x}}
  \;\dx,
  \\
  \varrho_{2}\sts{\omega_0,c_0,x_0,\gamma_0, u}\triangleq&~
  \Re\int\sts{\i \partial_x R + \frac12
          \abs{R}^2R} \sts{\omega_0,c_0,x_0,\gamma_0 ; x}\; \overline{\varepsilon\sts{\omega_0,c_0,x_0,\gamma_0, u; x}}
            \;\dx,
  \\
  \varrho_{3}\sts{\omega_0,c_0,x_0,\gamma_0, u}\triangleq&~
  \Re\int \partial_x R\sts{\omega_0,c_0,x_0,\gamma_0 ; x}\; \overline{\varepsilon\sts{\omega_0,c_0,x_0,\gamma_0, u; x}}
    \;\dx,
  \\
  \varrho_{4}\sts{\omega_0,c_0,x_0,\gamma_0, u}\triangleq&~
   \Re\int\i R\sts{\omega_0,c_0,x_0,\gamma_0 ; x}\;
           \overline{ \varepsilon\sts{\omega_0,c_0,x_0,\gamma_0, u; x}}
             \;\dx,
\end{align*}
where $R\sts{\omega_0,c_0,x_0,\gamma_0 ; x}$ and
$\varepsilon\sts{\omega_0,c_0,x_0,\gamma_0, u; x}$ are defined by
\eqref{asln:stc} and \eqref{aslns:rem:def}. By the direct
calculations, we have
$$\varepsilon\sts{\overrightarrow{q}^0, R^0; x}
 ~=~ 0;$$
 and
\begin{align*}
  \sts{\frac{\partial}{\partial\omega_0}\varepsilon}
   \sts{~\overrightarrow{q}^0, R^0;~x~}
    ~=~
        -\left.
            \frac{\partial
                }{
                    \partial\omega_0
                }
           R
         \right|_{\overrightarrow{q}_0=\overrightarrow{q}^0},
& \quad
 \sts{ \frac{\partial}{\partial c_0}\varepsilon}
    \sts{~\overrightarrow{q}^0, R^0;~x~}
    ~=~
    -\left.\frac{\partial}{\partial
    c_0}R\right|_{\overrightarrow{q}_0=\overrightarrow{q}^0},
  \\
  \sts{\frac{\partial}{\partial x_0}\varepsilon}
  \sts{~\overrightarrow{q}^0, R^0;~x~}
    ~=~
    \left.-\frac{\partial}{\partial
    x_0}R\right|_{\overrightarrow{q}_0=\overrightarrow{q}^0},
&\quad
 \sts{ \frac{\partial}{\partial\gamma_0}\varepsilon }
 \sts{~\overrightarrow{q}^0, R^0;~x~}
    ~=~
    \left.-\i R\right|_{\overrightarrow{q}_0=\overrightarrow{q}^0}.
\end{align*}
These imply that at the point $\sts{\overrightarrow{q}^0, R^0}$,
\begin{align*}
& \left.\frac{\partial\left(\varrho_{1}, \varrho_{2}, \varrho_{3},
\varrho_{4}\right)}{\partial\left(\omega_0, c_0, x_0,
\gamma_0\right)}\right|_{
(\overrightarrow{\mathbf{q}}_0,u)=(\overrightarrow{\mathbf{q}}^{0},R^0)
}
  \\
  = &
 \left. \begin{pmatrix}
        -\partial_{\omega_0}\mass{R}
    &
         -\partial_{c_0}\mass{R}
    &
         0
    &
         0
    \\
        -\partial_{\omega_0}\momentum{R}
    &
        -\partial_{c_0}\momentum{R}
    &
        0
    &
        0
    \\
        -\Re\int\partial_{x}R \; \overline{\partial_{\omega_0}R}
    &
        -\Re\int  \partial_{x}R\; \overline{\partial_{c_0}R}
    &
        - \Re\int  \partial_{x}R\; \overline{\partial_{x_0}R}
    &
        -\Re\int  \partial_{x}R\; \overline{\i R}
    \\
        -\Re\int  \i R \overline{\partial_{\omega_0}R }
    &
        -\Re\int  \i R \overline{\partial_{c_0}R }
    &
        -\Re\int  \i R \overline{\partial_{x_0}R }
    &
        -\Re\int  \i R \overline{\i R }
  \end{pmatrix} \right|_{ (\overrightarrow{\mathbf{q}}_0,u)=(\overrightarrow{\mathbf{q}}^{0},R^0) }
\end{align*}
By the simple calculations, the determinant of the above Jacobian is
$$-\normto{\partial_{x}\phi_{\omega^{0},c^{0}}}^{2}\cdot\normto{\phi_{\omega^0,c^0}}^{2}\cdot
\det d''\sts{~\omega^0,~c^0},$$ which is non-degenerate for
$\sts{c^0}^2 < 4 \omega^0$. Thus we can complete the proof by the
implicit function theorem.
\end{proof}


\section{Stability of the single traveling wave}\label{sect:stab:asln}
In this section, we shall give an alternative proof of the orbital
stability of the single traveling wave with $c^2 < 4\omega$ in the
energy space in \cite{ColinOhta-DNLS}, where the argument is based
on the concentration compactness principle. Now inspired by
Martel-Merle-Tsai's idea in \cite{MartelMT:Stab:NLS}, our argument
is the energy method together with the modulation analysis and
perturbation theory, which can be applied to the multi-traveling
wave case with the weak interactions.

Let $(\omega^{0}, c^{0})\in \R^2$ satisfy
$\sts{c^{0}}^{2}<4\omega^{0}$, $\alpha_0$ be determined by Lemma
\ref{lem:aslnst:decomp}, $A_0$, $\delta_0=\delta_0(A_0)$ be
determined later, and $\delta<\alpha_0$. Suppose that $u\sts{t}$ is
the solution of \eqref{DNLS} with the initial data
$u_{0}\in\mathcal{U}_1\sts{\delta, \omega^{0}, c^{0}}$, then by the
definition of the small tube $\mathcal{U}_1\sts{\delta,
 \omega^{0}, c^{0}}$,  there exist $x^{0}\in\R$ and $\gamma^{0}\in\R$
such that
\begin{align*}
    \normhone{ u_{0}(\cdot) - \varphi_{\omega^{0},c^{0}}\sts{\cdot-x^{0}}e^{\i\gamma^{0}}
    }<\delta.
\end{align*}
Let $A_0>2$ be determined later and define
\begin{align*}
    T^*=\sup\{\; t\geq 0, \; \sup_{\tau\in [0,t]}\inf_{x^0\in \R, \gamma^0\in\R}\normhone{ u(\tau,\cdot)-\varphi_{\omega^{0},c^{0}}\sts{\cdot-x^0}e^{\i\gamma^0}}
    \leq A_0\delta\; \}.
\end{align*}
By the continuity of $u(t)$ in $H^1$, we know that $T^*>0$. In order
to prove Theorem \ref{thm:stab:asln}, it suffices to show
$T^*=+\infty$ for some constants $\delta_0>0$ and $A_0>2$.

Assume that $T^*<+\infty$, we know that for any $t\in [0, T^*]$,
there exist $(x^0(t), \gamma^0(t))\in \R^2$ such that
\begin{align*}
      \normhone{ u(t,\cdot)-\varphi_{\omega^{0},c^{0}}\sts{\cdot-x^0(t)}e^{\i\gamma^0(t)}}  \leq A_0\delta.
\end{align*}
If necessary, we can choose $\delta_0$ sufficiently small to ensure
that the condition $ A_{0}\delta_0<\alpha_0$ holds, which enables us
to establish the structure decomposition to the solution $u(t)$,
$t\in [0, T^*]$.

\vskip0.1in \noindent \textbf{Step 1: The geometric decomposition of
solution $u(t)$ around the single traveling wave.} By Lemma
\ref{lem:aslnst:decomp}, we can modify the parameters $\omega^{0},
c^{0}, x^{0}(t)$ and $\gamma^{0}(t)$ such that $\left(c(t)\right)^2
< 4 \omega(t)$ for any $t\in [0, T^*]$,  and the remainder term
\begin{align}
\label{asln:rem:def}
    \varepsilon\sts{t, x} & \triangleq u\sts{t, x}-R\sts{t, x},\quad\text{where}\; R\sts{t, x}\triangleq  \varphi_{\omega\sts{t},c\sts{t}}\sts{x-x\sts{t}}e^{\i\gamma\sts{t}},
\end{align}
has the following orthogonal structures
\begin{align}
\label{asln:orth:nondeg}
        \Re\int R\sts{t}
            \overline{\varepsilon\sts{t}}
        \;\dx =0,
&\quad
        \Re\int
          \sts{\i \partial_x R\sts{t}+\frac12\abs{R(t)}^2 R \sts{t}}\;\overline{\varepsilon\sts{t}}
          \;\dx
        =0,
\\
\label{asln:orth:sym}
        \Re\int
        \partial_x R\sts{t}\overline{\varepsilon\sts{t}}
        \;\dx =0,
&\quad
        \Re\int
           \i\; R\sts{t} \overline{\varepsilon\sts{t}}
        \;\dx =0,
\end{align}
and for any $t\in [0, T^*]$, we have
\begin{align}
\label{alsn:data:small} &
        \normhone{\varepsilon\sts{0}}+\abs{ \omega\sts{0}-\omega^{0} }+\abs{ c\sts{0}-c^{0} }
        \leq C_I\delta,
\\
\label{asln:sl:rough} &
        \normhone{\varepsilon\sts{t}}+\abs{ \omega\sts{t}-\omega^{0} }+\abs{ c\sts{t}-c^{0} }
        \leq C_I A_{0}\delta.
\end{align}
If necessary, we can choose $\delta_0$ sufficiently small such that
$C_{I}A_{0}\delta_0<1$. Since \eqref{asln:sl:rough} is too rough, we
will combine the energy method with the coercivity property of the
linearized operator to show more refined estimates.

\vskip0.1in \noindent\textbf{Step 2: A conserved functional and
refined estimate of the remainder term
$\normhone{\varepsilon\sts{t}}$.} We now introduce the following
functional with parameters $\omega(0)$ and $c(0)$
\begin{align}
\label{acf:asln} \mathfrak{J}_{\omega(0), c(0)}\sts{u(t)} \triangleq
&\;
    E\sts{u(t)}+\omega\sts{0}\cdot\mass{u(t)}+c\sts{0}\cdot \momentum{u(t)},
\end{align}
which is conserved for the solution $u(t)$ of \eqref{DNLS} by the
conservation laws of mass, momentum and energy.

By the decomposition \eqref{asln:rem:def}, the orthogonal structures
\eqref{asln:orth:nondeg} and the estimate \eqref{asln:sl:rough}, we
have the following expansion formula.
\begin{lemma}
\label{lem:enlin:asln}
\begin{align}
\label{acf:lnen} \notag
        \mathfrak{J}_{\omega(0), c(0)}\sts{ u\sts{t} }
  =
      \mathfrak{J}_{\omega(0), c(0)}\sts{ R\sts{0} }
   & +   \mathcal{H}_{\omega\sts{t},c\sts{t}}\bilin{ \varepsilon\sts{t} }{ \varepsilon\sts{t} }
   +   \normhone{ \varepsilon\sts{t} }^{2}\beta\sts{ \normhone{ \varepsilon\sts{t} } }
  \\
  &
   +   \bigo{ | \omega\sts{t}-\omega\sts{0} |^{2} + | c\sts{t}-c\sts{0} |^{2} },
\end{align}
where $\beta\sts{r}\to 0$ as $r\to 0$, and
 \begin{align*}
\mathcal{H}_{\omega\sts{t},c\sts{t}}\bilin{ \varepsilon\sts{t} }{
\varepsilon\sts{t} }\triangleq &
        \int
        \frac{1}{2}\abs{\partial_{x}\varepsilon(t)}^{2}
       -   \frac{3}{32}\abs{R(t)}^{4}\abs{\varepsilon(t)}^{2}
         -      \frac38  \abs{R(t)}^{2}\sts{\Re\sts{ \overline{R(t)}\varepsilon(t)}}^{2}
            \;\dx \notag
\\
  &           +\int   \frac{1}{2}\omega\sts{t} \abs{\varepsilon(t)}^{2}
        -   \frac{1}{2}c\sts{t} \Im\sts{\overline{\varepsilon(t)}\partial_{x}\varepsilon(t)} \;\dx \notag
\\
&+
      \int  \frac{1}{4}c\sts{t} \abs{R(t)}^{2}\abs{\varepsilon(t)}^{2}
        \;\dx       +\frac12 c(t) \sts{\Re\sts{ \overline{R(t)}\varepsilon(t)}}^2
        \;\dx.
\end{align*}
\end{lemma}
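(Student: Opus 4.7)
The plan is to write the functional $\mathfrak{J}_{\omega(0),c(0)}$ as a $(\omega(t),c(t))$-shift of the action functional $J_{\omega(t),c(t)}$, then Taylor-expand each piece in $\varepsilon(t)$ and in the parameter increments $\omega(t)-\omega(0)$, $c(t)-c(0)$. Concretely, I would start from
\begin{align*}
\mathfrak{J}_{\omega(0),c(0)}\sts{u(t)}
 = J_{\omega(t),c(t)}\sts{u(t)}
   + (\omega(0)-\omega(t))M\sts{u(t)}
   + (c(0)-c(t))P\sts{u(t)}.
\end{align*}

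First I would expand $J_{\omega(t),c(t)}\sts{R(t)+\varepsilon(t)}$ around $R(t)$. Since $R(t)=\varphi_{\omega(t),c(t)}(\cdot-x(t))e^{\i\gamma(t)}$ is (up to symmetries) a critical point of $J_{\omega(t),c(t)}$, the linear term in $\varepsilon$ vanishes, the quadratic term is precisely the quadratic form $\mathcal{H}_{\omega(t),c(t)}\bilin{\varepsilon(t)}{\varepsilon(t)}$ (one just collects the quadratic contributions of $\int|\partial_x u|^2,\,\omega\int|u|^2,\,-\frac{c}{2}\Im\int\bar u\partial_x u,\,\frac{c}{8}\int|u|^4,\,-\frac{1}{32}\int|u|^6$ at $u=R+\varepsilon$), and the higher order cubic/quartic/quintic/sextic remainder is estimated by $\normhone{\varepsilon}^{3}$ using Sobolev embedding $H^1\hookrightarrow L^\infty$ and the uniform $H^1$-bound on $R$, giving the $\normhone{\varepsilon}^2\beta(\normhone{\varepsilon})$ term.

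Next I would expand $M(u(t))$ and $P(u(t))$. For mass, $M(R+\varepsilon)=M(R)+\Re\int \bar R\,\varepsilon\,\dx+M(\varepsilon)$, and the first orthogonality relation in \eqref{asln:orth:nondeg} kills the linear part, so $M(u(t))=M(R(t))+M(\varepsilon(t))$. For momentum, a direct computation (with an integration by parts on $-\frac12\Im\int\bar R\,\partial_x\varepsilon$) gives the linear contribution
\begin{align*}
\Re\int\overline{\sts{\i\partial_x R+\tfrac12|R|^2 R}}\,\varepsilon\,\dx,
\end{align*}
which vanishes by the second orthogonality relation in \eqref{asln:orth:nondeg}; the quadratic part of $P$ joins the quadratic form $\mathcal{H}_{\omega(t),c(t)}$. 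The leftover factors $(\omega(0)-\omega(t))M(\varepsilon)$ and $(c(0)-c(t))\cdot(\text{quadratic in }\varepsilon)$ are absorbed into $\normhone{\varepsilon}^2\beta(\normhone{\varepsilon})$, since $|\omega(t)-\omega(0)|+|c(t)-c(0)|\to 0$ by \eqref{asln:sl:rough}.

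Finally I would convert the $t$-dependent constant $J_{\omega(t),c(t)}(R(t))=d(\omega(t),c(t))$ into the fixed constant $\mathfrak{J}_{\omega(0),c(0)}(R(0))=d(\omega(0),c(0))$. Taylor-expanding $d$ at $(\omega(0),c(0))$ and using the known derivatives $\partial_\omega d=M(\varphi_{\omega,c})$, $\partial_c d=P(\varphi_{\omega,c})$, together with $M(R(t))=M(R(0))+O(|\omega(t)-\omega(0)|+|c(t)-c(0)|)$ and the analogous expansion for $P(R(t))$, the first-order terms $\partial_\omega d\cdot(\omega(t)-\omega(0))$ and $\partial_c d\cdot(c(t)-c(0))$ cancel exactly against $(\omega(0)-\omega(t))M(R(0))$ and $(c(0)-c(t))P(R(0))$, leaving only the claimed $O(|\omega(t)-\omega(0)|^2+|c(t)-c(0)|^2)$ remainder. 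Combining the three expansions yields \eqref{acf:lnen}. I expect the main bookkeeping obstacle to be this last cancellation: one must simultaneously expand $d(\omega(t),c(t))$ \emph{and} the factors $M(R(t)),P(R(t))$ multiplying the parameter shifts, and verify that the leading linear-in-shift terms match up — everything else is a routine Sobolev/Taylor estimate.
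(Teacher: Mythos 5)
Your proposal is correct and follows essentially the same route as the paper: expand in $\varepsilon(t)$ around $R(t)$, kill the linear term using the Euler--Lagrange equation for $R(t)$ together with the orthogonality conditions \eqref{asln:orth:nondeg}, identify the second variation with $\mathcal{H}_{\omega(t),c(t)}$ up to errors controlled by Young's inequality and Sobolev embedding, and cancel the parameter shifts in the constant term via the Taylor expansion of $d(\omega,c)$ using $\partial_\omega d=M$, $\partial_c d=P$. Splitting $\mathfrak{J}_{\omega(0),c(0)}=J_{\omega(t),c(t)}+(\omega(0)-\omega(t))M+(c(0)-c(t))P$ at the outset, rather than invoking the elliptic equation at the end as the paper does in \eqref{acf:asln:1st}, is only a cosmetic reordering of the same computation.
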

\begin{proof}Firstly, by \eqref{asln:rem:def}, \eqref{asln:sl:rough} and the integration by parts, we have
\begin{align*}
  \frac12  \int \abs{\partial_{x}u\sts{t}}^{2}\;\dx
=&
  \frac12  \int \abs{\partial_{x}R\sts{t}}^2
    -   2\Re\sts{\overline{\partial^2_{x}R\sts{t}}\varepsilon\sts{t}}
    +   \abs{\partial_{x}\varepsilon\sts{t}}^2
    \;\dx,
\\
-\frac{1}{32}    \int \abs{u\sts{t}}^{6}\;\dx =&
  -\frac{1}{32}  \int \abs{R\sts{t}}^6
    +   6\abs{R\sts{t}}^{4}\Re\sts{\overline{R\sts{t}}\varepsilon\sts{t}}
  +
        3 \abs{R\sts{t}}^{4}\abs{\varepsilon\sts{t}}^{2}
    \;\dx
\\
& -\frac{1}{32} \int 12\abs{R\sts{t}}^{2}\sts{\Re\sts{
\overline{R\sts{t}}\varepsilon\sts{t}}}^2
    \;\dx
    +    \normhone{ \varepsilon\sts{t} }^{2}\beta\sts{ \normhone{ \varepsilon\sts{t} } }.
\end{align*}

Secondly, by the formula of mass $M(u(t))$, we have
\begin{align*}
 \frac12   \int \abs{u\sts{t}}^{2}\;\dx
&=
   \frac12 \int \abs{R\sts{t}}^2
    +   2\Re\sts{\overline{R\sts{t}}\varepsilon\sts{t}}
    +   \abs{\varepsilon\sts{t}}^2
    \;\dx.
\end{align*}

Last, by the formula of momentum $P(u(t))$, we get
\begin{align*}
 -\frac12   \Im\int\overline{u\sts{t}}\,\partial_x u\sts{t}\;\dx
=&
  -\frac12  \Im\int
        \overline{R\sts{t}}\partial_xR\sts{t}
    +   \overline{R\sts{t}}\partial_x \varepsilon\sts{t}
    +   \overline{\varepsilon\sts{t}}\partial_x R\sts{t}
    +   \overline{\varepsilon\sts{t}}\partial_x \varepsilon\sts{t}
    \;\dx
\\
=&
  -\frac12  \Im\int
        \overline{R\sts{t}}\partial_xR\sts{t}
       +   \overline{\varepsilon\sts{t}}\partial_x \varepsilon\sts{t}
    \;\dx  -\Re \int   \i \,\overline{\partial_xR\sts{t}} \varepsilon\sts{t}  \; \dx,
\\
\frac18    \int \abs{u\sts{t}}^{4}\;\dx =&\; \frac18    \int
\abs{R\sts{t}}^4
    +   4\abs{R(t)}^{2}\Re\sts{\overline{R(t)}\varepsilon(t)}
    +   2
            \abs{R\sts{t}}^{2}\abs{\varepsilon\sts{t}}^{2}
            \;\dx
\\
&
   + \frac{1}{8} \int 4\sts{\Re\sts{\overline{R\sts{t}}\varepsilon\sts{t}}}^2\;\dx+ \normhone{ \varepsilon\sts{t} }^{2}\beta\sts{ \normhone{ \varepsilon\sts{t} } }.
\end{align*}

Multiplying $M(u(t))$ with $\omega(0)$ and $P(u(t))$ with $c(0)$ and
summing up, we obtain
\begin{align}
 &     \mathfrak{J}_{\omega(0), c(0)}\sts{u(t)}
\notag
\\
= & \;
    \mathfrak{J}_{\omega(0), c(0)}\sts{R\sts{t}} \notag
    \\
 &  + \Re \int \sts{ -\overline{\partial^2_x R} -\frac{3}{16}\abs{R }^4\overline{R }
 +\omega(0)\overline{R } - \i c(0) \overline{\partial_x R }
 +\frac12 c(0)\abs{ R }^2 \overline{ R }  }\sts{t} \varepsilon (t)\; \dx \notag
 \\&
        + \int
        \frac{1}{2}\abs{\partial_{x}\varepsilon(t)}^{2}
       -   \frac{3}{32}\abs{R(t)}^{4}\abs{\varepsilon(t)}^{2}R\sts{t}
            \;\dx
       -      \frac38  \abs{R(t)}^{2}\sts{\Re\sts{ \overline{R(t)}\varepsilon(t)}}^{2}
            \;\dx\notag
\\
  &           +\int   \frac{1}{2}\omega\sts{0} \abs{\varepsilon(t)}^{2}
        -   \frac{1}{2}c\sts{0} \Im\sts{\overline{\varepsilon(t)}\partial_{x}\varepsilon(t)} \;\dx \notag
\\
&+
      \int  \frac{1}{4}c\sts{0} \abs{R(t)}^{2}\abs{\varepsilon(t)}^{2}
        \;\dx       +\frac12 c(0) \sts{\Re \overline{R(t)}\varepsilon(t)}^2
        \;\dx \notag
 \\
        &    +   \normhone{ \varepsilon\sts{t} }^{2}\beta\sts{ \normhone{ \varepsilon\sts{t} } }. \label{acf:asln:exp}
\end{align}

We first deal with the linear term in $\varepsilon(t)$. Since
$R\sts{t, x} =
\varphi_{\omega\sts{t},c\sts{t}}\sts{x-y\sts{t}}e^{\i\gamma\sts{t}}
$ satisfies that the following elliptic equation
\begin{align}\label{eq:tw:tvar}
 -\overline{\partial^2_x R\sts{t}} -\frac{3}{16}\abs{R\sts{t}}^4\overline{R\sts{t}} +\omega(t)\overline{R(t)}
 - \i c(t) \overline{\partial_x R\sts{t}}  +\frac12 c(t)\abs{ R\sts{t}}^2 \overline{ R\sts{t}}=0,
\end{align}
we have the following cancelation  by the orthogonal structure
\eqref{asln:orth:nondeg}
\begin{align}
&\Re \int \sts{ -\overline{\partial^2_x R\sts{t}}
-\frac{3}{16}\abs{R\sts{t}}^4\overline{R\sts{t}}
+\omega(0)\overline{R(t)} - \i c(0) \overline{\partial_x R\sts{t}}
+\frac12 c(0)\abs{ R\sts{t}}^2 \overline{ R\sts{t}}}
\varepsilon(t)\;\dx \notag
\\
=& \sts{\omega(0)-\omega(t)} \Re \int \overline{R(t)} \varepsilon(t)
\;\dx + \sts{c(0)-c(t)} \Re \int \sts{ - \i \overline{\partial_x
R\sts{t}} +\frac12  \abs{ R\sts{t}}^2 \overline{ R\sts{t}}
}\varepsilon(t) \; \dx\notag
\\
=& \sts{\omega(0)-\omega(t)} \Re \int  R(t)\overline{
\varepsilon(t)} \;\dx + \sts{c(0)-c(t)} \Re \int \sts{  \i
\partial_x R\sts{t} +\frac12  \abs{ R\sts{t}}^2  R\sts{t}
}\overline{\varepsilon(t)} \; \dx \notag
\\
=&\; 0. \label{acf:asln:1st}
\end{align}

Secondly, by the definition of the linearized energy
$\mathcal{H}_{\omega\sts{t},c\sts{t}}\bilin{ \varepsilon\sts{t} }{
\varepsilon\sts{t} }$,  we have
\begin{align}
 & \int
        \frac{1}{2}\abs{\partial_{x}\varepsilon(t)}^{2}
       -   \frac{3}{32}\abs{R(t)}^{4}\abs{\varepsilon(t)}^{2}R\sts{t}
            \;\dx
       -      \frac38  \abs{R(t)}^{2}\sts{\Re\sts{ \overline{R(t)}\varepsilon(t)}}^{2}
            \;\dx\notag
\\
  &           +\int   \frac{1}{2}\omega\sts{0} \abs{\varepsilon(t)}^{2}
        -   \frac{1}{2}c\sts{0} \Im\sts{\overline{\varepsilon(t)}\partial_{x}\varepsilon(t)} \;\dx \notag
\\
&+
      \int  \frac{1}{4}c\sts{0} \abs{R(t)}^{2}\abs{\varepsilon(t)}^{2}
        \;\dx       +\frac12 c(0) \sts{\Re \left(\overline{R(t)}\varepsilon(t)\right)}^2
        \;\dx  \notag
        \\
=& \; \mathcal{H}_{\omega\sts{t},c\sts{t}}\bilin{ \varepsilon\sts{t}
}{ \varepsilon\sts{t} } +
 C \normhone{\varepsilon(t)}^2 \big(    \abs{ \omega\sts{t}-\omega\sts{0} }
        +   \abs{ c\sts{t}-c\sts{0} } \big)  \notag
\\
= & \; \mathcal{H}_{\omega\sts{t},c\sts{t}}\bilin{
\varepsilon\sts{t} }{ \varepsilon\sts{t} } +
   O\left( \abs{ \omega\sts{t}-\omega\sts{0} }^2
        +   \abs{ c\sts{t}-c\sts{0} }^2\right) \notag \\& \; +  \normhone{\varepsilon(t)}^2 \beta\sts{ \normhone{ \varepsilon\sts{t} } }.
        \label{acf:asln:2nd}
\end{align}

Finally,  we estimate the main term by Taylor's expansion.
\begin{align}
&\mathfrak{J}_{\omega(0), c(0)}\sts{R\sts{t}}  \notag
\\
= & \mathfrak{J}_{\omega(t), c(t)}\sts{R\sts{t}} +
\sts{\omega(0)-\omega(t)} M(R(t)) + \sts{c(0)-c(t)} P(R(t))\notag
\\
= & J_{\omega(t), c(t)}\sts{\varphi_{\omega(t), c(t)}} +
\sts{\omega(0)-\omega(t)} M(\varphi_{\omega(t), c(t)}(t)) +
\sts{c(0)-c(t)} P(\varphi_{\omega(t), c(t)}(t))\notag
\\
=& J_{\omega(0), c(0)}\sts{\varphi_{\omega(0), c(0)}}
 + \sts{\omega(t)-\omega(0)} M(\varphi_{\omega(0), c(0)}) +  \sts{c(t)-c(0)} P(\varphi_{\omega(0), c(0)}) \notag
  \\
  & + O\sts{ |\omega\sts{t}-\omega\sts{0} |^2
        +  \abs{ c\sts{t}-c\sts{0} }^2}  + \sts{\omega(0)-\omega(t)} M(\varphi_{\omega(t), c(t)}) + \sts{c(0)-c(t)} P(\varphi_{\omega(t), c(t)}) \notag
        \\
=& J_{\omega(0), c(0)}\sts{\varphi_{\omega(0), c(0)}} + O\sts{
|\omega\sts{t}-\omega\sts{0} |^2
        +  \abs{ c\sts{t}-c\sts{0} }^2}\notag
        \\
=& \mathfrak{J}_{\omega(0), c(0)}\sts{R(0)} + O\sts{
|\omega\sts{t}-\omega\sts{0} |^2
        +  \abs{ c\sts{t}-c\sts{0} }^2}    \label{est:main}
\end{align}
where we used the fact that
\begin{align*}
\frac{\partial}{\partial \omega}J_{\omega, c} \sts{\varphi_{\omega,
c}} = M (\varphi_{\omega, c}), \quad \frac{\partial}{\partial
c}J_{\omega, c} \sts{\varphi_{\omega, c}} = P (\varphi_{\omega, c})
\end{align*}
in the third equality. Inserting \eqref{acf:asln:1st},
\eqref{acf:asln:2nd} and \eqref{est:main} into \eqref{acf:asln:exp},
we can complete the proof.
\end{proof}

By the conservation laws of mass, momentum and energy,  we have
\begin{align*}
 \mathfrak{J}_{\omega(0), c(0)}\sts{ u\sts{t} } = \mathfrak{J}_{\omega(0), c(0)}\sts{ u\sts{0} }.
\end{align*}
This together with Lemma \ref{lem:enlin:asln} and
\eqref{alsn:data:small} implies that
\begin{align} \label{acf:asln:quadest}
&\; \mathcal{H}_{\omega\sts{t},c\sts{t}}\bilin{ \varepsilon\sts{t} }{ \varepsilon\sts{t} }   \\
=&\; \mathcal{H}_{\omega\sts{0},c\sts{0}}\bilin{ \varepsilon\sts{0}
}{ \varepsilon\sts{0} }
    +   \normhone{ \varepsilon\sts{0} }^{2}\beta\sts{ \normhone{ \varepsilon\sts{0} } }+  \normhone{ \varepsilon\sts{t} }^{2}\beta\sts{ \normhone{ \varepsilon\sts{t} } } \notag
    \\
  & +   \bigo{ | \omega\sts{t}-\omega\sts{0} |^{2} + | c\sts{t}-c\sts{0} |^{2} } \notag
\\
    \leq & \;    C    \normhone{ \varepsilon\sts{0} }^2  +
     \normhone{ \varepsilon\sts{t} }^{2}\beta\sts{ \normhone{ \varepsilon\sts{t} } }
     +   \bigo{ | \omega\sts{t}-\omega\sts{0} |^{2} + | c\sts{t}-c\sts{0} |^{2} }.\notag
\end{align}

As for the linearized energy
$\mathcal{H}_{\omega\sts{t},c\sts{t}}\bilin{ \varepsilon\sts{t} }{
\varepsilon\sts{t} } $ with $c(t)^2< 4 \omega(t)$, we have the
following coercivity property under the orthogonal conditions
\eqref{asln:orth:nondeg} and \eqref{asln:orth:sym}.

\begin{lemma}\label{lem:asln:quadcoer} Suppose that $c(t)^2< 4 \omega(t)$ for any $t\in [0, T^*]$, and $\varepsilon(t)\in H^{1}(\R)$ satisfies
the orthogonal conditions  \eqref{asln:orth:nondeg} and
\eqref{asln:orth:sym}, then there exists a constant $C_0>0$ such
that
  \begin{align*}
  C_0  \normhone{\varepsilon(t)}^2 \leq \mathcal{H}_{\omega\sts{t},c\sts{t}}\bilin{ \varepsilon\sts{t} }{ \varepsilon\sts{t} }.
  \end{align*}
\end{lemma}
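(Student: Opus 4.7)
The proof plan is to reduce the coercivity of $\mathcal{H}_{\omega(t),c(t)}$ to the already-established coercivity of $\widetilde{\mathcal{H}}_{\omega,c}$ from Proposition \ref{prop:coer:asln} via the plane-wave gauge that was used in the very definition of the traveling wave, namely \eqref{def:tw struct}. Writing $\phi = \phi_{\omega(t),c(t)}$ for brevity, I would introduce
\begin{align*}
\eta(y) \triangleq \varepsilon(t, y+x(t))\, e^{-\i\gamma(t)-\i\tfrac{c(t)}{2} y},
\end{align*}
so that $R(t,x)$ becomes $\phi(y)$ under the same gauge and $\varepsilon$ becomes $\eta$. Since multiplication by a unit modulus factor with bounded derivative is an $H^1$ isometry up to a constant (depending only on $|c(t)|$, hence uniform for $\delta_0$ small), it suffices to prove $\widetilde{\mathcal H}_{\omega(t),c(t)}(\eta,\eta) \geq C\,\|\eta\|_{H^1}^2$, and in fact $\mathcal{H}_{\omega(t),c(t)}(\varepsilon,\varepsilon)=\widetilde{\mathcal H}_{\omega(t),c(t)}(\eta,\eta)$, as the next step shows.

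The next step is a direct computation of two ingredients. First, expanding $|\partial_x \varepsilon|^2 = |\partial_y\eta|^2 + c(t)\Im(\bar\eta\,\partial_y\eta) + \tfrac{c(t)^2}{4}|\eta|^2$ and $\Im(\bar\varepsilon\partial_x\varepsilon) = \Im(\bar\eta\,\partial_y\eta) + \tfrac{c(t)}{2}|\eta|^2$, the cross terms in $c(t)$ cancel against the $-\tfrac12 c(t)\Im(\bar\varepsilon\,\partial_x\varepsilon)$ contribution in $\mathcal{H}_{\omega(t),c(t)}$ and the shift $\tfrac{c(t)^2}{8}|\eta|^2$ combines with $\tfrac{\omega(t)}{2}|\eta|^2$ to yield the factor $\tfrac12(\omega(t)-\tfrac{c(t)^2}{4})$ appearing in $\widetilde{\mathcal H}_{\omega(t),c(t)}$. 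Using $\bar{R}\varepsilon = \phi\eta$ after the change of variable, one verifies term by term that the pointwise quadratic expressions match those in the definition of $\widetilde{\mathcal H}_{\omega(t),c(t)}$.

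Second, I would translate the four orthogonality conditions \eqref{asln:orth:nondeg}–\eqref{asln:orth:sym}: straightforward computation gives $(\Re\eta,\phi)=0$ from the first relation in \eqref{asln:orth:nondeg}, $(\Im\eta,\phi)=0$ from the second relation in \eqref{asln:orth:sym}, $(\Re\eta,\partial_y\phi) + \tfrac{c(t)}{2}(\Im\eta,\phi) = 0$ from the first relation in \eqref{asln:orth:sym}, and $(\Im\eta,\partial_y\phi) - \tfrac{c(t)}{2}(\Re\eta,\phi) + \tfrac12(\Re\eta,\phi^3) = 0$ from the second relation in \eqref{asln:orth:nondeg}. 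Using the first two identities to eliminate the $\tfrac{c(t)}{2}$-terms, these reduce exactly to the four hypotheses of Proposition \ref{prop:coer:asln}. Applying that proposition yields $\widetilde{\mathcal H}_{\omega(t),c(t)}(\eta,\eta) \geq C \|\eta\|_{H^1(\R)}^2$, and translating back gives the desired bound on $\|\varepsilon(t)\|_{H^1(\R)}$.

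There is no deep obstacle; the only care needed is bookkeeping of signs when matching the orthogonality conditions and, for the uniformity of the constant $C_0$ over $t \in [0,T^*]$, invoking the continuous dependence of the constants $C_1, C_2$ in Proposition \ref{prop:coer:arf} on $(\omega,c)$ together with the estimate \eqref{asln:sl:rough}, which ensures $(\omega(t),c(t))$ stays in a fixed compact subset of $\{c^2<4\omega\}$ provided $\delta_0$ is chosen small enough.
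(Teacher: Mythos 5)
Your proposal is correct and follows essentially the same route as the paper's proof: the same gauge transformation $\eta(y)=\varepsilon(t,y+x(t))e^{-\i\gamma(t)-\i\frac{c(t)}{2}y}$, the identity $\mathcal{H}_{\omega(t),c(t)}(\varepsilon,\varepsilon)=\widetilde{\mathcal{H}}_{\omega(t),c(t)}(\eta,\eta)$, the translation of the four orthogonality conditions into those of Proposition \ref{prop:coer:asln}, and the equivalence of $\|\varepsilon\|_{H^1}$ with $\|\eta\|_{H^1}$ up to a constant depending on $c(t)$. Your bookkeeping of the $\tfrac{c(t)}{2}$-terms in the orthogonality relations and your remark on the uniformity of the constant via \eqref{asln:sl:rough} are both correct and in fact make explicit details the paper leaves implicit.
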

\begin{proof} Since the traveling wave has the following structure
\begin{align*}
R\sts{t, x} = &
\varphi_{\omega\sts{t},c\sts{t}}\sts{x-x\sts{t}}e^{\i\gamma\sts{t}}
=   \phi_{\omega\sts{t},c\sts{t}}\sts{x-x\sts{t}}e^{\i\gamma\sts{t}}
e^{i\frac12 c(t)\sts{x-x(t)}},
\end{align*}
we introduce the similar structure for the remainder term
$\varepsilon(t)$ and define $\eta(t)$ as following
\begin{align*}
\eta\sts{t, x}= \varepsilon(t,x+x(t))e^{-\i\gamma\sts{t}}
e^{-i\frac12 c(t)x}\;\Longleftrightarrow\;
\varepsilon(t,x)=\eta\sts{t, x-x\sts{t}}e^{\i\gamma\sts{t}}
e^{i\frac12 c(t)\sts{x-x(t)}}.
\end{align*}
By the simple computations, the linearized energy
$\mathcal{H}_{\omega\sts{t},c\sts{t}}\bilin{ \varepsilon\sts{t} }{
\varepsilon\sts{t} } $ is
\begin{align*}
& \mathcal{H}_{\omega\sts{t},c\sts{t}}\bilin{ \varepsilon\sts{t} }{
\varepsilon\sts{t} }
\\
=&  \int\left(
    \frac12\abs{\partial_x \eta}^2+\frac12\sts{\omega(t)-\frac14 c(t)^2}\abs{\eta}^2
    -\frac{3}{32}\phi^4_{\omega(t),c(t)}\abs{\eta}^2 -\frac38 \phi^4_{\omega(t),c(t)}\abs{\eta_1(t)}^2 \right)\, \dx\\
    &
    \qquad \qquad + \int \left(\frac14 c(t)  \phi^2_{\omega(t),c(t)}\abs{\eta(t)}^2 +\frac12 c(t)
    \phi^2_{\omega(t),c(t)}\abs{\eta_1(t)}^2 \right) \,\dx
    \\
 =&   \bilin{\mathcal{L}_{+} \eta_1(t)}{\eta_1(t)}+ \bilin{\mathcal{L}_{-} \eta_2(t)}{\eta_2(t)}
    =\widetilde{   \mathcal{H}}_{\omega(t),c(t)} \bilin{\eta(t)}{\eta(t)},
    \end{align*}
the orthogonal conditions  \eqref{asln:orth:nondeg} and
\eqref{asln:orth:sym} on $\varepsilon(t)$ are equivalent to the
following conditions on $\eta(t)=\eta_1(t)+\i \,\eta_2(t)$:
 $$\bilin{\eta_2(t)}{\phi_{\omega(t),c(t)}}             = 0,$$
 and $\bilin{ \eta_1(t) }{\phi_{\omega(t),c(t)}}= 0$,  $\bilin{\eta_1(t)}{\partial_x\phi_{\omega(t),c(t)}}= 0,$
 \begin{align*}
     \frac{1}{2}\bilin{ \eta_1(t)}{\phi_{\omega(t),c(t)}^3}
        +\bilin{ \eta_2(t)}{\partial_x\phi_{\omega(t),c(t)}}        = 0.
  \end{align*}
By Proposition \ref{prop:coer:asln}, we have
\begin{align*} \mathcal{H}_{\omega\sts{t},c\sts{t}}\bilin{ \varepsilon\sts{t} }{ \varepsilon\sts{t} } =
\widetilde{   \mathcal{H}}_{\omega(t),c(t)} \bilin{\eta(t)}{\eta(t)}
\geq C \normhone{\eta(t)}^2,
\end{align*}
which together \eqref{asln:sl:rough} and  the fact that
\begin{align*}
 \normhone{\varepsilon(t)}^{2}
    \leq C   \normto{\eta_{x}(t)}^{2}+C\sts{1+c(t)^2}\normto{\eta(t)}^{2}
\end{align*}
implies the result.
\end{proof}

By Lemma \ref{lem:asln:quadcoer} and \eqref{acf:asln:quadest}, there
exists some constant $C>0$ such that for any $t\in [0, T^*]$, we
have
\begin{align}\label{asln:rem:refine}
\normhone{ \varepsilon\sts{t} }^{2}
  \leq
       C \normhone{ \varepsilon\sts{0} }^{2}
    +   C\sts{\abs{ \omega\sts{t}-\omega\sts{0} }^{2}
    +   \abs{ c\sts{t}-c\sts{0} }^{2}}.
\end{align}
This completes the refined estimate of the remainder term
$\normhone{\varepsilon\sts{t}}$.

\vskip 0.1in \noindent \textbf{Step 3: Refined estimate of
$|\omega(t)-\omega(0)|+|c(t)-c(0)|$.}  By \eqref{asln:rem:def},
\eqref{asln:orth:nondeg}, and \eqref{asln:sl:rough}, we have for any
$t\in [0, T^*]$
\begin{align*}
\begin{pmatrix}
 M(u(t)) \\
P(u(t)) \\
\end{pmatrix}
=& \begin{pmatrix}
 M(\varphi_{\omega\sts{t},c\sts{t}}) \\
P(\varphi_{\omega\sts{t},c\sts{t}}) \\
\end{pmatrix} + O \sts{\normhone{ \varepsilon\sts{t} }^{2}}
\\
=& \begin{pmatrix}
 M(\varphi_{\omega\sts{0},c\sts{0}}) \\
P(\varphi_{\omega\sts{0},c\sts{0}}) \\
\end{pmatrix} +d''\sts{\omega\sts{0}, c\sts{0}}\begin{pmatrix}
                             \omega\sts{t}-\omega\sts{0} \\
                             c\sts{t}-c\sts{0} \\
                           \end{pmatrix}+ O \sts{\normhone{ \varepsilon\sts{t} }^{2}}
\\
& \quad + \sts{\abs{ \omega\sts{t}-\omega\sts{0} }
    +   \abs{ c\sts{t}-c\sts{0} }} \beta\sts{\abs{ \omega\sts{t}-\omega\sts{0} }
    +   \abs{ c\sts{t}-c\sts{0} }},
\end{align*}
which, together with \eqref{alsn:data:small}, the mass and momentum
conservation laws and the non-degenerate conditions $\det
d''\sts{\omega^{0}, c^{0}}<0$, implies that for sufficient small
$\delta$
\begin{align}
\label{asln:para:refine}
  \abs{\omega\sts{t}-\omega\sts{0}} + \abs{c\sts{t}-c\sts{0}}\leq C
    \sts{
        \normhone{\varepsilon\sts{t}}^{2}+\normhone{\varepsilon\sts{0}}^{2}
    }.
\end{align}
By \eqref{asln:sl:rough}, \eqref{asln:rem:refine} and
\eqref{asln:para:refine}, we have
\begin{align}\label{asln:dyn}
\normhone{ \varepsilon\sts{t} }  + \abs{\omega\sts{t}-\omega\sts{0}}
+ \abs{c\sts{t}-c\sts{0}}
  \leq
       C \normhone{ \varepsilon\sts{0} }.
\end{align}

\vskip0.1in \noindent\textbf{Step 4: Conclusion.} Now by
\eqref{alsn:data:small}, and \eqref{asln:dyn}, we have for any $t\in
[0, T^*]$
\begin{align*}
 & \inf_{(x^0, \gamma^0)\in\R^2} \normhone{ u\sts{t,\cdot}-\varphi_{\omega^{0},c^{0}}\sts{ \cdot-x^0}e^{\i\gamma^0} }
  \\
  \leq &
  \normhone{ u\sts{t,\cdot}-\varphi_{\omega^{0},c^{0}}\sts{ \cdot-x\sts{t} }e^{\i\gamma\sts{t}} }
  \\
  \leq
  &\;
    \normhone{ u\sts{t, \cdot}-\varphi_{\omega\sts{t},c\sts{t}}\sts{ \cdot-x\sts{t} }e^{\i\gamma\sts{t}} }
    +
    C \sts{\abs{\omega(t)-\omega^0}+\abs{c(t)-c^0}}
  \\
  \leq
  &\;
    \normhone{\varepsilon\sts{t}}
    +
    C\sts{\abs{\omega\sts{t}-\omega\sts{0}} + \abs{c\sts{t}-c\sts{0}}+\abs{\omega\sts{0}-\omega^{0}}+ \abs{c\sts{0}-c^{0}}}
    \\
 \leq   & \;
  C\sts{\normhone{\varepsilon\sts{0}}
    +\abs{\omega\sts{0}-\omega^{0}}+ \abs{c\sts{0}-c^{0}}}
   \\
 \leq   & \;   C\; C_I \; \delta.
\end{align*}

If choosing $A_0\geq 2C C_I$, then for any $t\in [0, T^*]$, we have
\begin{align*}
 \inf_{(x^0, \gamma^0)\in\R^2} \normhone{ u\sts{t,\cdot}-\varphi_{\omega^{0},c^{0}}\sts{ \cdot-x^0}e^{\i\gamma^0} }
   \leq
  \frac12 A_0 \delta,
  \end{align*}
which contradicts with the assumption $T^*<+\infty$ by the
continuity of $u(t)$ in $H^1(\R)$. This implies $T^*=+\infty$ and
completes the proof of Theorem \ref{thm:stab:asln}.


\section{Decomposition of the solution around the sum of two traveling
waves}\label{sect:decomp:tsln}

From now on, we will consider the stability of the solution around
the sum of two traveling waves for \eqref{DNLS} in the energy space
as that of gKdV and NLS equations in \cite{MartelMT:Stab:gKdV,
MartelMT:Stab:NLS}, we shall extend the structure decomposition of
the functions (or solutions) around the single traveling wave as in
Lemma \ref{lem:aslnst:decomp} to the corresponding decomposition. It
is noticed that the interaction between two traveling waves is weak
since they locate far away from each other.

First, we introduce some notations. Let $(\omega_{k}^{0},
c_{k}^{0})\in \R^2$ be such that $0<c^0_1<c^0_2$ and $\sts{c_{k}^{0}
}^{2}<4\omega_{k}^{0}$, $k=1, 2$. Let $\alpha < \alpha_0$ be small
enough, and $L>L_0$ be large enough, where $\alpha_0$, $L_0$ will be
determined later. Now we define the following $H^{1}$-tube which is
close to the sum of two traveling waves with weak interaction,
 \begin{multline*}
   \mathcal{U}\sts{\alpha~,~L~,~\omega_{1}^{0}~,~c_{1}^{0}~,~\omega_{2}^{0}~,~c_{2}^{0}~}\\
  \triangleq
  \set{
    u\in H^{1}(\R)~:~
    \inf_{\substack{x_{2}-x_{1}> L\\\gamma_{1},\gamma_{2}\in\R}}
    \normhone{
        u   -
                \sum^2_{k=1}\varphi_{\omega_{k}^{0},c_{k}^{0}}\sts{\cdot-x_{k}}\,e^{\i\gamma_{k}}
    }
    <\alpha
  }.
 \end{multline*}

Next, a similar argument, but more delicate, as we adopted in the
proof of Lemma \ref{lem:aslnst:decomp} gives us the useful
geometrical decomposition of the functions in the above $H^1$-tube.
More precisely, we have
\begin{lemma}
\label{lem:tslnst:decomp}
  There exist constants $\alpha_0>0$ sufficient small, $\widetilde{L}_{0}$ large enough and $C_{II}>0$,
   such that if $
u\in\mathcal{U}\sts{\alpha,~L,~\omega_{1}^{0},~c_{1}^{0},~\omega_{2}^{0},~c_{2}^{0}}$
with $\alpha<\alpha_0,$  and $L>\widetilde{L}_{0}$, then there exist
unique $\mathcal{C}^{1}$ functions $$
    \overrightarrow{\mathbf{q}}\triangleq\,
    \sts{ ~\omega_{1}~,~c_{1}~,~x_{1}~,~\gamma_{1}~,~\omega_{2}~,~c_{2}~,~x_{2}~,~\gamma_{2}~} \in
\sts{0,~+\infty}\times\R^3\times\sts{0, +\infty}\times\R^3 $$ with
$\left(c_{k}\right)^2 < 4\omega_{k}$, $k=1,2$, such that
\begin{align}
        \Re\int
 R_{k}(x)\; \overline{\varepsilon(x)}
        \;\dx =0, &
\quad
        \Re\int\sts{\i \partial_x R_{k} + \frac12
          \abs{R_{k}}^2R_{k}} (x)\; \overline{\varepsilon(x)}
        \;\dx =0, \label{tslns:orth:nondeg}
\\
       \Re\int \partial_x R_{k}(x)\; \overline{\varepsilon(x)}
        \;\dx =0, &
 \quad
   \Re\int\i R_{k}(x)\;
           \overline{ \varepsilon(x)}
        \;\dx =0, \label{tslns:orth:sym}
\end{align}
where $k=1,2$ and
\begin{align}
    R_{k}\sts{x} = & \;R\sts{ ~\omega_{k},~c_{k},~x_{k},~\gamma_{k};~x} \triangleq
    \varphi_{\omega_{k},c_{k}}\sts{x-x_{k}}e^{\i\gamma_{k}}, \label{tsln:stc}
    \\
    \varepsilon
    \sts{x}
    = &\;
    \varepsilon
    \sts{\overrightarrow{\mathbf{q}},~u;~x}\triangleq u\sts{x} -
    \sum^2_{k=1}R_{k}\sts{x}. \label{tslnst:rem:def}
\end{align}
 Moreover, we have
\begin{align}\label{est:prior}
      \normhone{\varepsilon}+\sum^2_{k=1}\left( |\omega_k-\omega_{k}^{0}|+| c_k-c_{k}^{0} |\right)
    &
        \leq C_{II}\;\alpha.
\end{align}
\end{lemma}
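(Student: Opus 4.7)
The plan is to apply the implicit function theorem in exactly the same spirit as in Lemma \ref{lem:aslnst:decomp}, but now with an $8$-parameter family $\overrightarrow{\mathbf{q}}=(\omega_1,c_1,x_1,\gamma_1,\omega_2,c_2,x_2,\gamma_2)$ and the $8$ orthogonality conditions \eqref{tslns:orth:nondeg}--\eqref{tslns:orth:sym}. For each $k\in\{1,2\}$ and $j\in\{1,2,3,4\}$, define functionals $\varrho_{k,j}(\overrightarrow{\mathbf{q}},u)$ by the $k$-th soliton analogues of the four functionals introduced in the proof of Lemma \ref{lem:aslnst:decomp}, with $R$ replaced by $R_k$ and $\varepsilon=u-R_1-R_2$. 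Thus we seek to solve $\varrho_{k,j}(\overrightarrow{\mathbf{q}},u)=0$ for all $k,j$ near the reference point $(\overrightarrow{\mathbf{q}}^{\,0},u_0)=\bigl((\omega_k^0,c_k^0,x_k^0,\gamma_k^0)_{k=1,2},\,R_1^0+R_2^0\bigr)$, where $x_2^0-x_1^0>L$.

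The main point is to analyse the $8\times 8$ Jacobian matrix
\[
  \mathcal{J}\;\triangleq\;\left.\frac{\partial(\varrho_{k,j})_{k,j}}{\partial\overrightarrow{\mathbf{q}}}\right|_{(\overrightarrow{\mathbf{q}}^{\,0},\,u_0)}.
\]
Because each $R_k$ and its derivatives with respect to $\omega_k,c_k,x_k,\gamma_k$ are concentrated near $x=x_k^0$ with exponential decay of rate $\sqrt{4\omega_k^0-(c_k^0)^2}/2$, every ``cross'' integral (i.e.\ one involving a function localized near $x_1^0$ paired against one localized near $x_2^0$, such as $\Re\!\int \partial_{\omega_2}R_2\cdot\overline{R_1}\,\dx$) is $O(e^{-\theta_0 L})$ for some $\theta_0>0$ depending only on $\omega_k^0,c_k^0$. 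Consequently
\[
  \mathcal{J}\;=\;\begin{pmatrix} \mathcal{J}_1 & 0 \\ 0 & \mathcal{J}_2 \end{pmatrix}\;+\;O\!\bigl(e^{-\theta_0 L}\bigr),
\]
where each $4\times 4$ diagonal block $\mathcal{J}_k$ is precisely the single-soliton Jacobian from the proof of Lemma \ref{lem:aslnst:decomp}, whose determinant equals
$-\|\partial_x\phi_{\omega_k^0,c_k^0}\|_2^{2}\cdot\|\phi_{\omega_k^0,c_k^0}\|_2^{2}\cdot\det d''(\omega_k^0,c_k^0)\neq 0$ by the non-degeneracy condition \eqref{nondeg}. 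Choosing $\widetilde{L}_0$ sufficiently large so that the exponentially small perturbation does not destroy invertibility, $\mathcal{J}$ is invertible with a bound uniform in $L>\widetilde{L}_0$.

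Once invertibility of $\mathcal{J}$ at $(\overrightarrow{\mathbf{q}}^{\,0},u_0)$ is established, the implicit function theorem produces, for every $u$ in a neighbourhood of $u_0$, a unique $\mathcal{C}^1$ solution $\overrightarrow{\mathbf{q}}=\overrightarrow{\mathbf{q}}(u)$ of the orthogonality system \eqref{tslns:orth:nondeg}--\eqref{tslns:orth:sym}. Taking the infimum defining $\mathcal{U}(\alpha,L,\omega_1^0,c_1^0,\omega_2^0,c_2^0)$ and using the Lipschitz dependence furnished by the implicit function theorem with respect to a norm bounded uniformly in $L>\widetilde{L}_0$ (thanks to the uniform lower bound on $|\det\mathcal{J}|$), we obtain the control
\[
  \|\varepsilon\|_{H^1}+\sum_{k=1}^{2}\bigl(|\omega_k-\omega_k^0|+|c_k-c_k^0|\bigr)\;\leq\;C_{II}\,\alpha,
\]
provided $\alpha_0$ is taken small enough and $\widetilde L_0$ large enough; the condition $c_k^2<4\omega_k$ follows from $(c_k^0)^2<4\omega_k^0$ and continuity by possibly shrinking $\alpha_0$.

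The main obstacle is the accurate verification of the block-diagonal-plus-exponentially-small structure of $\mathcal{J}$: one must check, term by term, that every entry of $\mathcal{J}$ of the ``off-diagonal'' type reduces to an integral of a product of two exponentially localized profiles centred at $x_1^0$ and $x_2^0$, hence decays like $e^{-\theta_0 L}$ with $\theta_0=\tfrac12\min_k\sqrt{4\omega_k^0-(c_k^0)^2}$. This exponential smallness, combined with the quantitative non-degeneracy of each single-soliton block, is what allows the two-soliton decomposition to be obtained uniformly for all $L>\widetilde L_0$.
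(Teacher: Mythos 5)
Your proposal is correct and follows essentially the same route as the paper: set up the eight functionals $\varrho_j^k$, show the $8\times 8$ Jacobian at $(\overrightarrow{\mathbf{q}}^{\,0},R_1^0+R_2^0)$ is block-diagonal up to $O(e^{-\theta L})$ cross terms, identify each diagonal block with the single-soliton Jacobian whose determinant is $-\|\partial_x\phi_{\omega_k^0,c_k^0}\|_2^2\|\phi_{\omega_k^0,c_k^0}\|_2^2\det d''(\omega_k^0,c_k^0)\neq 0$, and invoke the implicit function theorem with bounds uniform in $L$. The only (harmless) quantitative quibble is your decay rate $\theta_0=\tfrac12\min_k\sqrt{4\omega_k^0-(c_k^0)^2}$, which should be taken strictly smaller (the paper uses $\tfrac14\min$) to absorb the case of equal decay rates and polynomial prefactors in the convolution of the two exponential tails.
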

\begin{remark}
\begin{enumerate}
  \item For $k=1,2$, the traveling wave $\varphi_{\omega_{k},c_{k}}(x-x_k)e^{i\gamma_k}$
   of \eqref{DNLS} makes sense only for $\left(c_{k}\right)^{2}< 4\omega_{k} $ or $\left(c_{k}\right)^{2}= 4\omega_{k}$ with $c_k>0$ in
   \cite{MiaoTX-2015}, thus the parameter $\alpha_0$ is dependent of the parameters $(\omega_{1}^{0},~c_{1}^{0},~\omega_{2}^{0},~c_{2}^{0})$.
  \item The condition ``$\widetilde{L}_0$ large enough'' is essential in the stability theory of the sum of multi traveling waves
  in the energy space. It makes sure that the interaction between all traveling waves is weak, hence the implicit function theorem and perturbation
  theory can be applied.
\end{enumerate}
\end{remark}
\begin{proof}[Proof of Lemma \ref{lem:tslnst:decomp}]
First, by the definition of the $H^1$-tube, we know that for any
$u\in
\mathcal{U}\sts{\alpha~,~L~,~\omega_{1}^{0}~,~c_{1}^{0}~,~\omega_{2}^{0}~,~c_{2}^{0}~},$
there exist $x_{1}^{0},$ $\gamma_{1}^{0},$ $x_{2}^{0}$ and
$\gamma_{2}^{0}$ with $x^0_2-x^0_1>L$ such that
\begin{align}
\label{eq:2h1neighborhood}
    \normhone{
        u
        -
        R^{0}
    }<\alpha,
\end{align}
where
$
    \displaystyle R^{0}\sts{x}\triangleq \sum^2_{k=1} R_{k}^{0}\sts{x}$, $
     R_{k}^{0}\sts{x} \triangleq R\sts{
     ~\omega_{k}^{0},~c_{k}^{0},~x_{k}^{0},~\gamma_{k}^{0};~x}.
$
Denote  the open $H^1$-ball of center $R^{0}\sts{x}$ and of radius $\alpha$
by $B\left( R^{0} , \alpha\right)$, then $u\in B\left( R^{0} , \alpha\right)$.

Now let
\begin{align*}
    \overrightarrow{\mathbf{q}}^{0}
    \triangleq
    \sts{
        ~\omega_{1}^{0},~c_{1}^{0},~x_{1}^{0},~\gamma_{1}^{0},~\omega_{2}^{0},
        ~c_{2}^{0},~x_{2}^{0},~\gamma_{2}^{0}},
\end{align*}
and define for $k=1,~2$
\begin{align*}
&
    \varrho_{1}^{k}\sts{~\overrightarrow{\mathbf{q}}~,~u~}
    ~=~
    \Re\int
        R_{k}(x)\; \overline{\varepsilon\sts{~\overrightarrow{\mathbf{q}}~,~u~; ~x~}}
    \;\dx~,
\\
&
    \varrho_{2}^{k}\sts{~\overrightarrow{\mathbf{q}}~,~u~}
    ~=~
    \Re\int\sts{\i \partial_x R_{k} + \frac12
        \abs{R_{k}}^2R_{k}} (x)\; \overline{\varepsilon\sts{~\overrightarrow{\mathbf{q}}~,~u~; ~x~}}
    \;\dx~,
\\
&
    \varrho_{3}^{k}\sts{~\overrightarrow{\mathbf{q}}~,~u~}
    ~=~
    \Re\int
        \partial_x R_{k}(x)\; \overline{\varepsilon\sts{~\overrightarrow{\mathbf{q}}~,~u~; ~x~}}
    \;\dx~,
\\
&
    \varrho_{4}^{k}\sts{~\overrightarrow{\mathbf{q}}~,~u~}
    ~=~
    \Re\int
        \i R_{k}(x)\;\overline{ \varepsilon\sts{~\overrightarrow{\mathbf{q}}~,~u~; ~x~}}
    \;\dx~,
\end{align*}
where $R_{k}(x)$ and
$\varepsilon\sts{~\overrightarrow{\mathbf{q}}~,~u~; ~x~}$ are
defined by \eqref{tsln:stc} and \eqref{tslnst:rem:def}. It is easy
to see that
\begin{align}
\label{eq:2epsilon0}
    \varepsilon
    \sts{\overrightarrow{\mathbf{q}}^{0},~R^{0};
            ~x
    }\equiv 0.
\end{align}

By \eqref{eq:2epsilon0} and the fact that
\begin{align*}
    \sts{\frac{\partial}{\partial\omega_{k}}\varepsilon}
    \sts{
        ~\overrightarrow{\mathbf{q}}^{0}~,~R^{0}~;
            ~x~
    }
    ~=~&
        -\left.
            \frac{\partial
                }{
                    \partial\omega_{k}
                }
           R_{k}
         \right|_{ \overrightarrow{\mathbf{q}}=\overrightarrow{\mathbf{q}}^{0}
         }~,
\\
    \sts{\frac{\partial}{\partial c_{k}}\varepsilon}
    \sts{
        ~\overrightarrow{\mathbf{q}}^{0}~,~R^{0}~;
            ~x~
    }
    ~=~&
        -\left.
            \frac{\partial
                }{
                    \partial c_{k}
                }
           R_{k}
         \right|_{ \overrightarrow{\mathbf{q}}=\overrightarrow{\mathbf{q}}^{0}
         }~,
\\
  \sts{\frac{\partial}{\partial x_k}\varepsilon}
    \sts{
        ~\overrightarrow{\mathbf{q}}^{0}~,~R^{0}~;
            ~x~
    }
    ~=~&
    -\left.
        \frac{\partial}{\partial x_{k}}R_{k}
    \right|_{ \overrightarrow{\mathbf{q}}=\overrightarrow{\mathbf{q}}^{0}
    }~,
\\
 \sts{ \frac{\partial}{\partial\gamma_{k}}\varepsilon }
    \sts{
        ~\overrightarrow{\mathbf{q}}^{0}~,~R^{0}~;
            ~x~
    }
    ~=~&
    -
    \left.
    \i~R_{k}
    \right|_{ \overrightarrow{\mathbf{q}}=\overrightarrow{\mathbf{q}}^{0} }~,
\end{align*}
we have for $k, k'=1,~2$
\begin{align*}
    \frac{\partial\varrho_{1}^{k}
        }{
                \partial \omega_{k'}
    }
    \sts{~\overrightarrow{\mathbf{q}}^{0}~,~R^0~}
    &
    ~=~
    -
    \Re\int
        R^0_{k}(x)\;
       \overline{ \left.
            \frac{\partial}{\partial \omega_{k'}}R_{k'}
        \right|_{ \overrightarrow{\mathbf{q}}=\overrightarrow{\mathbf{q}}^{0} }(x)}
    \;\dx~,
\\
    \frac{\partial\varrho_{1}^{k}
        }{
                \partial c_{k'}
    }
    \sts{~\overrightarrow{\mathbf{q}}^{0}~,~R^0~}
    &
    ~=~
    -
    \Re\int
        R^0_{k}(x)\;
       \overline{ \left.
            \frac{\partial
            }{
                \partial c_{k'}
            }
            R_{k'}
        \right|_{ \overrightarrow{\mathbf{q}}=\overrightarrow{\mathbf{q}}^{0} }(x)  }              \;\dx~,
\\
    \frac{\partial\varrho_{1}^{k}
        }{
                \partial x_{k'}
    }
    \sts{~\overrightarrow{\mathbf{q}}^{0}~,~R^0~}
    &
    ~=~
    -
    \Re\int
        R^0_{k}(x)\;
        \overline{\left.
            \frac{\partial
            }{
                \partial x_{k'}
            }
            R_{k'}
        \right|_{ \overrightarrow{\mathbf{q}}=\overrightarrow{\mathbf{q}}^{0} }(x)  }              \;\dx~,
\\
    \frac{\partial\varrho_{1}^{k}
        }{
                \partial \gamma_{k'}
    }
    \sts{~\overrightarrow{\mathbf{q}}^{0}~,~R^0~}
    &
    ~=~
    \Re\int
        R^0_{k}(x)\;
   \i~ \overline{\left.
R_{k'}
    \right|_{ \overrightarrow{\mathbf{q}}=\overrightarrow{\mathbf{q}}^{0} }(x)   }             \;\dx~,
\end{align*} and
\begin{align*}
    \frac{\partial\varrho_{2}^{k}
        }{
                \partial \omega_{k'}
    }\sts{~\overrightarrow{\mathbf{q}}^{0}~,~R^0~}
    &
    ~=~
    -
    \Re\int\sts{\i~\partial_x R^0_{k} + \frac12
        \abs{R^0_{k}}^2R^0_{k}} (x)\;
        \overline{\left.
            \frac{\partial}{\partial \omega_{k'}}R_{k'}
        \right|_{ \overrightarrow{\mathbf{q}}=\overrightarrow{\mathbf{q}}^{0} }(x)}
    \;\dx~,
\\
    \frac{\partial\varrho_{2}^{k}
        }{
                \partial c_{k'}
    }\sts{~\overrightarrow{\mathbf{q}}^{0}~,~R^0~}
    &
    ~=~
    -
    \Re\int\sts{\i~\partial_x R^0_{k} + \frac12
        \abs{R^0_{k}}^2R^0_{k}} (x)\;
        \overline{\left.
            \frac{\partial
            }{
                \partial c_{k'}
            }
        R_{k'}
        \right|_{ \overrightarrow{\mathbf{q}}=\overrightarrow{\mathbf{q}}^{0} }(x)}
    \;\dx~,
\\
    \frac{\partial\varrho_{2}^{k}
        }{
                \partial x_{k'}
    }\sts{~\overrightarrow{\mathbf{q}}^{0}~,~R^0~}
    &
    ~=~
    -
    \Re\int\sts{\i~\partial_x R^0_{k} + \frac12
        \abs{R^0_{k}}^2R^0_{k}} (x)\;
        \overline{\left.
            \frac{\partial
            }{
                \partial x_{k'}
            }
        R_{k'}
        \right|_{ \overrightarrow{\mathbf{q}}=\overrightarrow{\mathbf{q}}^{0} }(x)}
    \;\dx~,
\\
    \frac{\partial\varrho_{2}^{k}
        }{
                \partial \gamma_{k'}
    }\sts{~\overrightarrow{\mathbf{q}}^{0}~,~R^0~}
    &
    ~=~
    \Re\int\sts{\i~\partial_x R^0_{k} + \frac12
        \abs{R^0_{k}}^2R^0_{k}} (x)\;
        \i~\overline{\left.
           R_{k'}
        \right|_{ \overrightarrow{\mathbf{q}}=\overrightarrow{\mathbf{q}}^{0} }(x)}
    \;\dx~,
\end{align*} and
\begin{align*}
    \frac{\partial\varrho_{3}^{k}
        }{
                \partial \omega_{k'}
    }\sts{~\overrightarrow{\mathbf{q}}^{0}~,~R^0~}
    &
    ~=~
    -
    \Re\int
        \partial_x R^0_{k}(x)\;
        \overline{\left.
            \frac{\partial}{\partial \omega_{k'}}R_{k'}
        \right|_{ \overrightarrow{\mathbf{q}}=\overrightarrow{\mathbf{q}}^{0} }(x)}
    \;\dx~,
\\
    \frac{\partial\varrho_{3}^{k}
        }{
                \partial c_{k'}
    }\sts{~\overrightarrow{\mathbf{q}}^{0}~,~R^0~}
    &
    ~=~
    -
    \Re\int
        \partial_x R^0_{k}(x)\;
        \overline{\left.
            \frac{\partial}{\partial c_{k'}}R_{k'}
        \right|_{ \overrightarrow{\mathbf{q}}=\overrightarrow{\mathbf{q}}^{0} }(x)}
    \;\dx~,
\\
    \frac{\partial\varrho_{3}^{k}
        }{
                \partial x_{k'}
    }\sts{~\overrightarrow{\mathbf{q}}^{0}~,~R^0~}
    &
    ~=~
    -
    \Re\int
        \partial_x R^0_{k}(x)\;
        \overline{\left.
            \frac{\partial}{\partial x_{k'}}R_{k'}
        \right|_{ \overrightarrow{\mathbf{q}}=\overrightarrow{\mathbf{q}}^{0} }(x)}
    \;\dx~,
\\
    \frac{\partial\varrho_{3}^{k}
        }{
                \partial \gamma_{k'}
    }\sts{~\overrightarrow{\mathbf{q}}^{0}~,~R^0~}
    &
    ~=~
    \Re\int
        \partial_x R^0_{k}(x)\;
        \i~\overline{\left.
            R_{k'}
        \right|_{ \overrightarrow{\mathbf{q}}=\overrightarrow{\mathbf{q}}^{0} }(x)}
    \;\dx~,
\end{align*} and
\begin{align*}
    \frac{\partial\varrho_{4}^{k}
        }{
                \partial \omega_{k'}
    }\sts{~\overrightarrow{\mathbf{q}}^{0}~,~R^0~}
    &
    ~=~
    -
    \Re\int
        \i~ R^0_{k}(x)\;
        \overline{\left.
            \frac{\partial}{\partial \omega_{k'}}R_{k'}
        \right|_{ \overrightarrow{\mathbf{q}}=\overrightarrow{\mathbf{q}}^{0} }(x)}
    \;\dx~,
\\
    \frac{\partial\varrho_{4}^{k}
        }{
                \partial c_{k'}
    }\sts{~\overrightarrow{\mathbf{q}}^{0}~,~R^0~}
    &
    ~=~
    -
    \Re\int
        \i~ R^0_{k}(x)\;
        \overline{\left.
            \frac{\partial}{\partial c_{k'}}R_{k'}
        \right|_{ \overrightarrow{\mathbf{q}}=\overrightarrow{\mathbf{q}}^{0} }(x)}
    \;\dx~,
\\
    \frac{\partial\varrho_{4}^{k}
        }{
                \partial x_{k'}
    }\sts{~\overrightarrow{\mathbf{q}}^{0}~,~R^0~}
    &
    ~=~
    -
    \Re\int
        \i~ R^0_{k}(x)\;
        \overline{\left.
            \frac{\partial}{\partial x_{k'}}R_{k'}
        \right|_{ \overrightarrow{\mathbf{q}}=\overrightarrow{\mathbf{q}}^{0} }(x)}
    \;\dx~,
\\
    \frac{\partial\varrho_{4}^{k}
        }{
                \partial \gamma_{k'}
    }\sts{~\overrightarrow{\mathbf{q}}^{0}~,~R^0~}
    &
    ~=~
    \Re\int
        \i~ R^0_{k}(x)\;
       \i~\overline{ \left.
         R_{k'}
        \right|_{ \overrightarrow{\mathbf{q}}=\overrightarrow{\mathbf{q}}^{0} }(x)}
    \;\dx~.
\end{align*}
In order to use the implicit function theorem for
$\varrho_{j}^{k}(\overrightarrow{\mathbf{q}},u)$ around
$(\overrightarrow{\mathbf{q}}^{0},R^0)$ with $j=1,2,3,4$ and
$k=1,2$, we only need to verify that the determinant of the
corresponding Jacobian $\Xi$ is nonzero, where
\begin{align}
\label{eq:jacob}
    \Xi =
    \begin{pmatrix}
      \Xi_{1,1} & \Xi_{1,2} \\
      \Xi_{2,1} & \Xi_{2,2} \\
    \end{pmatrix},
\quad
  \Xi_{k,k'}
  =
 \left. \begin{pmatrix}
        \frac{\partial\varrho_{1}^{k}
        }{
            \partial \omega_{k'}
        }
       &
         \frac{\partial\varrho_{1}^{k}
        }{
            \partial c_{k'}
        }
    &
         \frac{\partial\varrho_{1}^{k}
        }{
            \partial x_{k'}
        }
    &
        \frac{\partial\varrho_{1}^{k}
        }{
            \partial \gamma_{k'}
        }
    \\
        \frac{\partial\varrho_{2}^{k}
        }{
            \partial \omega_{k'}
        }
    &
        \frac{\partial\varrho_{2}^{k}
        }{
            \partial c_{k'}
        }
    &
        \frac{\partial\varrho_{2}^{k}
        }{
            \partial x_{k'}
        }
    &
        \frac{\partial\varrho_{2}^{k}
        }{
            \partial \gamma_{k'}
        }
    \\
        \frac{\partial\varrho_{3}^{k}
        }{
            \partial \omega_{k'}
        }
    &
        \frac{\partial\varrho_{3}^{k}
        }{
            \partial c_{k'}
        }
    &
        \frac{\partial\varrho_{3}^{k}
        }{
            \partial x_{k'}
        }
    &
        \frac{\partial\varrho_{3}^{k}
        }{
            \partial \gamma_{k'}
        }
    \\
        \frac{\partial\varrho_{4}^{k}
        }{
            \partial \omega_{k'}
        }
    &
        \frac{\partial\varrho_{4}^{k}
        }{
            \partial c_{k'}
        }
    &
        \frac{\partial\varrho_{4}^{k}
        }{
            \partial x_{k'}
        }
    &
        \frac{\partial\varrho_{4}^{k}
        }{
            \partial \gamma_{k'}
        }
  \end{pmatrix} \right|_{ (\overrightarrow{\mathbf{q}},u)=(\overrightarrow{\mathbf{q}}^{0},R^0)
  }.
\end{align}

First, by a straight calculation, we have for $k=1,2$
\begin{align*}
     \left.  \Xi_{k,k} =  \begin{pmatrix}
        -
     \partial_{\omega_{k}}\mass{ R_{k} }
      & -
      \partial_{c_{k}}\mass{ R_{k} }
      & 0
      & 0
      \\
        - \partial_{\omega_{k}}\momentum{ R_{k} }
      & -\partial_{c_{k}}\momentum{ R_{k} }
      & 0
      & 0
      \\
        -
        \Re\int    \partial_{x}R_{k}\; \overline{\partial_{\omega_k}R_{k}}
       & -
        \Re\int    \partial_{x}R_{k}\; \overline{\partial_{c_k}R_{k}}
       & -
         \Re\int    \partial_{x}R_{k}\; \overline{\partial_{x_{k}}R_{k}}
       & -
         \Re\int    \partial_{x}R_{k}\; \overline{\i R_{k}}
      \\
        -
        \Re\int  \i R_{k} \overline{\partial_{\omega_{k}}R_{k} }
      & -
        \Re\int  \i R_{k} \overline{\partial_{c_{k}}R_{k} }
      & -
        \Re\int  \i R_{k} \overline{\partial_{x_{k}}R_{k} }
      & -
        \Re\int  \i R_{k} \overline{\i R_{k} }
      \end{pmatrix}\right|_{ (\overrightarrow{\mathbf{q}},u)=(\overrightarrow{\mathbf{q}}^{0},R^0)  },
\end{align*}
then as in the proof of Lemma \ref{lem:aslnst:decomp}, we have
\begin{align}
\label{eq:djacob}
    \det\Xi_{k,k}
    =
    -
    \normto{\partial_{x}\phi_{\omega_{k}^{0},c_{k}^{0}}}^{2}
    \normto{\phi_{\omega_{k}^0,c_{k}^0}}^{2}
    \cdot \det d''\sts{~\omega_{k}^0~,~c_{k}^0~},\quad\text{ for } k=1,~2\,.
\end{align}

Next, we consider $\Xi_{k,k'}$ with $k\neq k'.$
Since  the center distance between $\mathcal{R}_{1}^{0}$ and
$\mathcal{R}_{2}^{0}$ is at least $L$, we have
\begin{align*}
    \int
        \abs{ \mathcal{R}_{1}^{0}\sts{x}~\mathcal{R}_{2}^{0}\sts{x} }
    \;\dx
    \leq &\;
    C
    \int
        e^{ -\frac{ \sqrt{4\omega_{1}^{0}-\sts{c_{1}^{0}}^{2}} }{2}\abs{ x-x^0_{1} } }
        ~
        e^{ -\frac{ \sqrt{4\omega_{2}^{0}-\sts{c_{2}^{0}}^{2}} }{2}\abs{ x-x^0_{2} } }
    \;\dx
\\
    \leq & \;
    C
    e^{ -\theta \abs{ x^0_{1}-x^0_{2} } }
    \leq
    C
    e^{ - \theta L },
\end{align*}
where
$C=C\sts{\omega_{1}^{0},~c_{1}^{0},~\omega_{2}^{0},~c_{2}^{0}}$,
$\theta
    =\frac14
    \min
    \ltl{
         \sqrt{4\omega_{1}^{0}-\sts{c_{1}^{0}}^{2}},
        ~
         \sqrt{4\omega_{2}^{0}-\sts{c_{2}^{0}}^{2}}},
$ and $\mathcal{R}_{k}^{0}$ takes one of the expressions
$\left.R_{k}\right|_{
\overrightarrow{\mathbf{q}}=\overrightarrow{\mathbf{q}}^{0} },$
$\left.\partial_{x}R_{k}\right|_{
\overrightarrow{\mathbf{q}}=\overrightarrow{\mathbf{q}}^{0} },$
$\left.\frac{\partial}{\partial \omega_{k}}R_{k}^{0}\right|_{
\overrightarrow{\mathbf{q}}=\overrightarrow{\mathbf{q}}^{0} },$
$\left.\frac{\partial}{\partial c_{k}}R_{k}\right|_{
\overrightarrow{\mathbf{q}}=\overrightarrow{\mathbf{q}}^{0} },$
$\left.\frac{\partial}{\partial x_{k}}R_{k}\right|_{
\overrightarrow{\mathbf{q}}=\overrightarrow{\mathbf{q}}^{0} }$ and
$\left.\frac{\partial}{\partial \gamma_{k}}R_{k}\right|_{
\overrightarrow{\mathbf{q}}=\overrightarrow{\mathbf{q}}^{0} }.$ It
follows that for  $k\neq k'$ and $j=1,~2,~3,~4\,$
\begin{align*}
    \abs{
        \frac{\partial\varrho_{j}^{k}
        }{
                \partial \omega_{k'}
        }
        \sts{~\overrightarrow{\mathbf{q}}^{0},R^0}
    }
    +
    \abs{
        \frac{\partial\varrho_{j}^{k}
        }{
                \partial c_{k'}
        }
        \sts{~\overrightarrow{\mathbf{q}}^{0},R^0}
    }
    +
    \abs{
        \frac{\partial\varrho_{j}^{k}
        }{
                \partial x_{k'}
        }
        \sts{~\overrightarrow{\mathbf{q}}^{0},R^0}
    }
    +
    \abs{
        \frac{\partial\varrho_{j}^{k}
        }{
                \partial \gamma_{k'}
        }
        \sts{~\overrightarrow{\mathbf{q}}^{0},R^0}
    }
    \leq
    C
    e^{ - \theta L },
\end{align*} which means
\begin{align}
\label{eq:sjacob}
  \det \Xi_{k,k'}= \bigo{ e^{ -\theta L } }, \quad\text{ for } k\neq k'.
\end{align}

Now, inserting \eqref{eq:djacob} and \eqref{eq:sjacob} into
\eqref{eq:jacob}, we have
\begin{align*}
    \det\Xi
    =
    \prod_{k=1}^{2}\left(
        \normto{\partial_{x}\phi_{\omega_{k}^{0},c_{k}^{0}}}^{2}
        \normto{\phi_{\omega_{k}^0,c_{k}^0}}^{2}
        \cdot \det d''\sts{~\omega_{k}^0~,~c_{k}^0~}\right)
        +
        \bigo{
            e^{ -\theta L }
            },
\end{align*}
which implies that there exists $L_0\triangleq
L_0(\omega^0_1,~c^0_1, ~\omega^0_2,~c^0_2)$ large enough such that
\begin{align*}
    \det\Xi\neq 0 \quad \text{for}\;L>L_0.
\end{align*}

At last, the implicit function theorem implies the results for any $u\in B\left( R^{0} , \alpha\right)$ with small $\alpha$, and the estimate \eqref{est:prior} with constant $C_{II}$ is indepentdent of parameters $x_k, \gamma_k$, $k=1, 2$ of the ball $B\left( R^{0} , \alpha\right)$, provided that $x_2-x_1>L_0$.
\end{proof}

Now we apply the above decomposition of the function to the solution
$u(t)$ of \eqref{DNLS} in $[0, T_0]$, and obtain the corresponding
dynamical version.  More precisely, we have
\begin{lemma}
\label{lem:tsln:decomp} Suppose
$u\in\mathcal{C}\sts{~\left[0~,~T_{0}\right],~H^{1}\sts{\R}~}$ is a
solution to \eqref{DNLS} with initial data
$u_{0}\in\mathcal{U}\sts{\alpha~,~L,~\omega^{0}_1~,~c^{0}_1,~\omega^{0}_2~,~c^{0}_2}$,
and
\begin{align*}
  u\sts{t}\in
  \mathcal{U}\sts{\alpha~,~\frac{L}{2}~,~\omega_{1}^{0}~,~c_{1}^{0}~,~\omega_{2}^{0}~,~c_{2}^{0}~},
  \quad\text{ for any }t\in\left(0~,~T_{0}\right],
\end{align*}
where $\alpha<\alpha_{0}$ and $\frac{L}{2}>\widetilde{L}_{0}$ with
$\alpha_{0}$ and $\widetilde{L}_{0}$ given by Lemma
\ref{lem:tslnst:decomp}. Then there exist unique $\mathcal{C}^{1}$
functions $$
    \overrightarrow{\mathbf{q}}(t)\triangleq\,
    \sts{ ~\omega_{1}(t),~c_{1}(t),~x_{1}(t),~\gamma_{1}(t),~\omega_{2}(t),~c_{2}(t),~x_{2}(t),~\gamma_{2}(t)}
$$ on $\left[0,~T_{0}\right]$ with values
$\sts{0,~+\infty}\times\R^3\times\sts{0,~+\infty}\times\R^3 $ and
$\left(c_{k}\sts{t}\right)^2 < 4\omega_{k}\sts{t}$, $k=1,2$ for all
$t\in\left[0~,~T_{0}\right]$ such that
\begin{align}
        \Re\int
 R_{k}\sts{t}\; \overline{\varepsilon\sts{t}}
        \;\dx =0,
        &\quad
        \Re\int\sts{\i \partial_x R_{k} + \frac12
          \abs{R_{k}}^2R_{k}} \sts{t}\; \overline{\varepsilon\sts{t}}
        \;\dx =0, \label{tsl:orth:nondeg}
\\
   \Re\int \partial_x R_{k}\sts{t}\; \overline{\varepsilon\sts{t}}
        \;\dx =0,
& \quad
   \Re\int\i R_{k}\sts{t}\;
           \overline{ \varepsilon\sts{t}}
        \;\dx =0, \label{tsl:orth:sym}
\end{align}
where $k=1,2$ and
\begin{align}
\label{tsln:tw}
    R_{k}\sts{t\,,\,x}
    = & \;
    R
    \sts{
        \,\omega_{k}\sts{t},\,c_{k}\sts{t},\,x_{k}\sts{t},\,\gamma_{k}\sts{t};\,x\,
    }
    \triangleq
    \varphi_{\omega_{k}\sts{t},c_{k}\sts{t}}\sts{x-x_{k}\sts{t}}e^{\i\gamma_{k}\sts{t}},
\\
\label{tsln:rem:def}
    \varepsilon
    \sts{t\,,\,x}
    = & \;
    \varepsilon
    \sts{\,\overrightarrow{\mathbf{q}}\sts{t}\,,\,u\sts{t\,,\,x}\,;
            \,x\,
    }\triangleq u\sts{t\,,\,x} - \sum^2_{k=1} R_{k}\sts{t\,,\,x}.
\end{align}
Moreover, for any $t\in \left[0~,~T_{0}\right],$ we have
\begin{align}
\label{tsln:sl:rough}
      \normhone{\varepsilon\sts{t}}
      +
      \sum_{k=1}^{2}\big(\abs{ \omega_k\sts{t}-\omega_{k}^{0} }
      +
    \abs{ c_k\sts{t}-c_{k}^{0} } \big)
      \leq
      C_{II}\;\alpha,
\end{align}
\begin{align}\label{tsl:cent:dist} x_{2}\sts{t}-x_{1}\sts{t} \geq \frac{L}{2},
\end{align}
and
\begin{align}
 \abs{\dot{\omega}_{k}\sts{t}}~
            +~\abs{\dot{c}_{k}\sts{t}}~
              +~\abs{ \dot{x}_{k}\sts{t}-c_{k}\sts{t}}~
            +  &~\abs{ \dot{\gamma_{k}}\sts{t}-\omega_{k}\sts{t} }
            \notag
     \\
        \leq & \;
        C_{II}\sts{ \|\varepsilon(t)\|_{H^1}
                +
                e^{ -\theta_{1}\sts{ \frac{L}{2} + \theta_1 t } }
            }, \label{tsln:para:dyn}
\end{align}
where $\displaystyle \theta_{1}
    = \frac14
    \min
    \ltl{
        \sqrt{4\omega_{1}^{0}-\sts{c_{1}^{0}}^{2}},
        ~
        \sqrt{4\omega_{2}^{0}-\sts{c_{2}^{0}}^{2}}, ~ c_{2}^{0}-c_{1}^{0}}.$
\end{lemma}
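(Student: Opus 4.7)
The plan is to upgrade the static decomposition Lemma~\ref{lem:tslnst:decomp} to the dynamical setting by a pointwise-in-time application followed by a modulation argument for the derivative estimate~\eqref{tsln:para:dyn}. First, I would fix an arbitrary $t \in [0, T_0]$ and apply Lemma~\ref{lem:tslnst:decomp} to $u(t) \in \mathcal{U}(\alpha, L/2, \omega^0_1, c^0_1, \omega^0_2, c^0_2)$ with the parameter $L$ there replaced by $L/2 > \widetilde{L}_0$. This directly produces the parameters $\omega_k(t), c_k(t), x_k(t), \gamma_k(t)$ satisfying the orthogonality relations~\eqref{tsl:orth:nondeg}--\eqref{tsl:orth:sym} and the a priori estimate~\eqref{tsln:sl:rough}. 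The separation~\eqref{tsl:cent:dist} follows because any nearly minimizing configuration in $\mathcal{U}(\alpha, L/2, \ldots)$ has $x_2 - x_1 > L/2$, and the decomposition parameters differ from such a minimizer by $O(\alpha)$; after enlarging $\widetilde{L}_0$ and shrinking $\alpha_0$ if necessary, this gives $x_2(t) - x_1(t) \geq L/2$. Continuity of the parameters in $t$ then follows from uniqueness in Lemma~\ref{lem:tslnst:decomp} together with continuity of $u$ in $H^1(\R)$.

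Next, I would derive the modulation equations by differentiating the eight orthogonality relations in~\eqref{tsl:orth:nondeg}--\eqref{tsl:orth:sym} with respect to $t$, substituting $\partial_t u$ from~\eqref{DNLS} and expressing $\partial_t R_k$ in terms of $\dot\omega_k, \dot c_k, \dot x_k, \dot\gamma_k$. After integration by parts and cancellation via the profile equation for $R_k$ (the analogue of \eqref{eq:tw:tvar} with parameters $(\omega_k(t),c_k(t))$), this yields a linear system
\begin{equation*}
M(t)\cdot \bigl(\dot\omega_1, \dot c_1, \dot x_1 - c_1, \dot\gamma_1 - \omega_1, \dot\omega_2, \dot c_2, \dot x_2 - c_2, \dot\gamma_2 - \omega_2\bigr)^{T} = F(t).
\end{equation*}
The matrix $M(t)$ is a perturbation of size $O(\|\varepsilon(t)\|_{H^1})$ of the Jacobian $\Xi$ from~\eqref{eq:jacob} evaluated at $(\omega_k(t), c_k(t))$. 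By~\eqref{eq:djacob}--\eqref{eq:sjacob} and the non-degeneracy $\det d''(\omega^0_k, c^0_k) < 0$ from~\eqref{nondeg}, the determinant of $\Xi$ is uniformly bounded away from zero for $L$ large and $\alpha$ small; hence $M(t)$ is invertible with a uniform bound on $M(t)^{-1}$, which also retroactively delivers the $\mathcal{C}^1$ regularity of the parameters. The right-hand side $F(t)$ splits into terms linear in $\varepsilon(t)$, of order $O(\|\varepsilon(t)\|_{H^1})$, and interaction terms of the form $\int |R_1(t,x)|^p|R_2(t,x)|^q\,\dx$ with $p,q\geq 1$, which by the exponential decay of $\phi_{\omega_k,c_k}$ in~\eqref{tw:rsol:a} are bounded by $C e^{-2\theta_1 (x_2(t) - x_1(t))}$.

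Finally, to convert the spatial bound $e^{-2\theta_1 (x_2(t) - x_1(t))}$ into the advertised $e^{-\theta_1(L/2 + \theta_1 t)}$ of~\eqref{tsln:para:dyn}, I would run a continuity argument on the separation $d(t) \triangleq x_2(t) - x_1(t)$. From the previous step, $|\dot x_k(t) - c_k(t)| \leq C(\|\varepsilon(t)\|_{H^1} + e^{-2\theta_1 d(t)})$, and from~\eqref{tsln:sl:rough} the speeds $c_k(t)$ stay close to $c_k^0$; since $c_2^0 - c_1^0 \geq 4\theta_1$, for $\alpha$ small and $L$ large we obtain $\dot{d}(t) \geq 2\theta_1$, whence $d(t) \geq L/2 + 2\theta_1 t$. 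Substituting back into the interaction bound yields $e^{-\theta_1(L/2 + \theta_1 t)}$ (with absolute constants absorbed), proving~\eqref{tsln:para:dyn}. The main obstacle is precisely this interdependence: the modulation ODE bound and the distance lower bound rely on one another, so closing the argument requires a bootstrap on a maximal subinterval of $[0,T_0]$ on which $d(t) \geq L/2 + \theta_1 t$, followed by strict improvement on that set to extend it to all of $[0, T_0]$.
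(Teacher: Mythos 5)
Your overall strategy coincides with the paper's: apply Lemma \ref{lem:tslnst:decomp} pointwise in $t$, obtain the modulation system by pairing the equation for $\varepsilon$ against the eight profiles (equivalently, differentiating the orthogonality relations), invert the $O(\|\varepsilon\|_{H^1})+O(e^{-\theta L})$ perturbation of the Jacobian $\Xi$, and feed the resulting speed estimate back into the separation $d(t)=x_{2}(t)-x_{1}(t)$. Two points, however. First, your claim in the opening paragraph that $x_{2}(t)-x_{1}(t)\geq L/2$ already follows from the static decomposition ``after shrinking $\alpha_{0}$'' is false as stated: the tube hypothesis only provides approximate centers with $x_{2}^{0}(t)-x_{1}^{0}(t)>L/2$, and the modulated centers differ from these by an \emph{additive} $O(\alpha)$, so no choice of $\alpha_{0}>0$ prevents $x_{2}(t)-x_{1}(t)$ from dropping slightly below $L/2$ when the infimum is nearly attained at separation close to $L/2$. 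The paper extracts only the rough bound $x_{2}(t)-x_{1}(t)\geq L/4$ at this stage and recovers \eqref{tsl:cent:dist} (indeed $\geq L/2+\theta_{1}t$) from the dynamics, using that the \emph{initial} datum lies in the $L$-tube, so that $x_{2}(0)-x_{1}(0)\geq L-O(\alpha)\geq L/2$. Your third paragraph carries out exactly this dynamical argument, so the error is self-correcting, but the premature claim should be removed. Second, the bootstrap you propose at the end is unnecessary: the standing hypothesis $u(t)\in\mathcal{U}\sts{\alpha,\frac{L}{2},\omega_{1}^{0},c_{1}^{0},\omega_{2}^{0},c_{2}^{0}}$ for all $t\in(0,T_{0}]$ supplies the rough lower bound $d(t)\geq L/4$ on the whole interval a priori, so the modulation estimate with error $e^{-\theta_{1}L/2}$ (up to constants) holds everywhere without circularity; then $\dot{d}(t)\geq\theta_{1}$ follows for $\alpha$ small and $L$ large, and one integrates directly from $d(0)\geq L/2$. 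The genuine continuity/bootstrap argument lives one level up, in the proof of Theorem \ref{thm:stab:tsln}, where membership in the tube must itself be propagated in time.
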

\begin{proof}Applying Lemma \ref{lem:tslnst:decomp} to $u\sts{t}$ for all
$t\in [0~,~T_{0}]$,  we can obtain \eqref{tsl:orth:nondeg},
\eqref{tsl:orth:sym} and \eqref{tsln:sl:rough}. It remains to show
\eqref{tsl:cent:dist} and \eqref{tsln:para:dyn}.

Since $u\sts{t}\in
\mathcal{U}\sts{\alpha,~\frac{L}{2},~\omega_{1}^{0},~c_{1}^{0},~\omega_{2}^{0},~c_{2}^{0}}$
for $t\in (0, T_0]$, there exist $x_{1}^{0}\sts{t},$
$x_{2}^{0}\sts{t},$ $\gamma_{1}^{0}\sts{t}$ and
$\gamma_{2}^{0}\sts{t}$ such that $ x_{2}^{0}\sts{ t } -
x_{1}^{0}\sts{ t }\geq \frac{L}{2}$ and
\begin{align}
\label{dynam:eq:2h1neighborhood}
    \normhone{
        u\sts{ t, \cdot }
        -
        \sum^2_{k=1}
    R_{k}^{0}\sts{t\,,\,\cdot}
    }<\alpha,
\end{align}
where
\begin{align*}
  R_{k}^{0}\sts{t\,,\,x}
  &
  =
  R
  \sts{
    ~\omega_{k}^{0}~,~c_{k}^{0}~,
    ~x_{k}^{0}\sts{ t }~,~\gamma_{k}^{0}\sts{ t }~;
    ~x~
  }
  =
  \varphi_{ \omega_{k}^{0}, c_{k}^{0} }
    \sts{x-x_{k}^{0}\sts{ t }}e^{\i\gamma_{k}^{0}\sts{ t }}.
\end{align*}
As in the proof of Lemma \ref{lem:tslnst:decomp}, we know that for
$k=1, 2,$
\begin{align*}
  \abs{ x_{k}\sts{ t } - x_{k}^{0}\sts{ t } }<\; C_{II}~\alpha,
\end{align*}
which implies
\begin{align*} x_{2}\sts{t}-x_{1}\sts{t} \geq \frac{L}{4},
\end{align*}
for sufficient small $\alpha$ and sufficient large $L$. We will
improve this estimate by the dynamics of $\dot x_k(t)$ and
$x_2(0)-x_1(0)\geq \frac{L}{2}$.

The $\mathcal{C}^{1}$ regularity of $\omega_{k}\sts{t},$
$c_{k}\sts{t},$ $x_{k}\sts{t}$ and $\gamma_{k}\sts{t}$ in $t$ can be
shown by a standard regularization argument, we can refer to
\cite{MartelM:Instab:gKdV} for more details. Now we formally verify
\eqref{tsln:para:dyn} by the equation of $\varepsilon\sts{t}$, and
the orthogonal structure \eqref{tsl:orth:nondeg} and
\eqref{tsl:orth:sym}. A simple calculation gives that
\begin{align}
i\partial_t \varepsilon  + \mathcal{L} \varepsilon =\quad &
\sum^{2}_{k=1}\sts{-\i \partial_t R_k - \partial^2_x R_k -  \frac12
\i |R_k|^2 \partial_x R_k + \frac12 \i R_k^2\overline{\partial_xR_k}
- \frac{3}{16}|R_k|^4R_k}
\notag\\
&- \frac12 \i |R_1+R_2|^2 \partial_x \sts{R_1+R_2} + \frac12 \i
|R_1|^2 \partial_x R_1 +   \frac12 \i |R_2|^2 \partial_x R_2
\notag\\
&+ \frac12 \i \sts{R_1+R_2}^2\overline{\partial_x \sts{R_1+R_2}} -
\frac12 \i R_1^2\overline{\partial_xR_1} - \frac12 \i
R_2^2\overline{\partial_xR_2}
\notag\\
&- \frac{3}{16}|R_1+R_2|^4\sts{R_1+R_2} + \frac{3}{16} |R_1|^4R_1 +
\frac{3}{16}|R_2|^4R_2
\notag\\
&+ H.O.T, \label{eq:rem}
\end{align}
where $\mathcal{L} \varepsilon$ and $H.O.T$ are defined by
\begin{align*}
 \mathcal{L} \varepsilon =
\partial^2_x \varepsilon &+  \frac12 \i \left[|R_1+R_2|^2 \partial_x
\varepsilon + 2 \partial_x \sts{R_1+R_2} \Re
\sts{\overline{(R_1+R_2)} \varepsilon}\right]
\\
&\;- \frac12 \i \left[\sts{R_1+R_2}^2 \overline{\partial_x
\varepsilon} + 2 \overline{ \partial_x\sts{R_1+R_2}}
\left(R_1+R_2\right) \varepsilon \right]
\\
&\; +\frac{3}{16}\left[|R_1+R_2|^4 \varepsilon + 4 |R_1+R_2|^2 \Re
\sts{\overline{(R_1+R_2)} \varepsilon}\right],
\end{align*}
and
\begin{align*}
H.O.T= & -\frac12 \i \left[2\partial_x \varepsilon \Re
\sts{\overline{(R_1+R_2)}\varepsilon} + \partial_x(R_1 +
R_2)|\varepsilon|^2 + |\varepsilon|^2\partial_x \varepsilon\right]
\\
& + \frac12 \i \left[2\overline{\partial_x \varepsilon}
\sts{R_1+R_2}\varepsilon  + \overline{\partial_x\sts{R_1 + R_2}}
\varepsilon ^2 + \varepsilon^2\overline{\partial_x
\varepsilon}\right]
\\
&-\frac{3}{16}\left[ 4(R_1+R_2)\sts{\Re
\sts{\overline{(R_1+R_2)}\varepsilon}}^2 + 2 |R_1 + R_2|^2 (R_1+R_2)
|\varepsilon|^2\right.
\\
&\qquad + 4 |R_1+R_2|^2\varepsilon \Re
\sts{\overline{(R_1+R_2)}\varepsilon}+4(R_1+R_2)|\varepsilon|^2\Re\sts{\overline{(R_1+R_2)}
\;\varepsilon}
\\
& \qquad + 4\sts{\Re{\overline{(R_1+R_2)}\; \varepsilon}}^2
\varepsilon +2
|R_1+R_2|^2|\varepsilon|^2\varepsilon+(R_1+R_2)|\varepsilon|^4 \\
&\qquad \left. + 4
\Re\sts{\overline{(R_1+R_2)}\varepsilon}|\varepsilon|^2\varepsilon
+|\varepsilon|^4\varepsilon\right].
\end{align*}

By \eqref{eq:tw}, we know that $R_k(t,x)$, $k =1, 2$, satisfy
\begin{align*}
& -i  \partial_t R_k - \partial^2_x R_k -  \frac12 \i |R_k|^2
\partial_x R_k + \frac12 \i R_k^2\overline{\partial_xR_k} -
\frac{3}{16}|R_k|^4R_k \notag
\\
= & - i \dot{\omega}_k(t) ~
                \frac{ \partial }{ \partial{\omega_{k}} }R_{k}(t)
            -i \dot{c}_{k}\sts{t}~
                \frac{ \partial }{ \partial{c_{k}} }R_{k}(t)
\\
&
 -i \sts{ \dot{x}_{k}\sts{t}-c_{k}(t)}~
                \frac{ \partial }{ \partial{x_{k}} }R_{k}(t)
-i \sts{ \dot{\gamma_{k}}(t)-\omega_{k}(t) }
                \frac{ \partial }{ \partial{\gamma_{k}} }R_{k}(t).
\end{align*}
Inserting the above identity into \eqref{eq:rem}, we have
\begin{align}
i\partial_t \varepsilon  + \mathcal{L} \varepsilon +&
\sum^{2}_{k=1}\sts{ i \dot{\omega}_k(t) ~
                \frac{ \partial }{ \partial{\omega_{k}} }R_{k}(t)
            +i \dot{c}_{k}\sts{t}~
                \frac{ \partial }{ \partial{c_{k}} }R_{k}(t)}
\label{eq:paras}\\
 +&\sum^{2}_{k=1}\sts{i \sts{ \dot{x}_{k}\sts{t}-c_{k}(t)}~
                \frac{ \partial }{ \partial{x_{k}} }R_{k}(t)
+i \sts{ \dot{\gamma_{k}}(t)-\omega_{k}(t) }
                \frac{ \partial }{ \partial{\gamma_{k}} }R_{k}(t)}
\notag\\
=\; &- \frac12 i |R_1+R_2|^2 \partial_x \sts{R_1+R_2} + \frac12 i
|R_1|^2 \partial_x R_1 +   \frac12 i |R_2|^2 \partial_x R_2
\label{eq:winter:a}\\
&+ \frac12 i \sts{R_1+R_2}^2\overline{\partial_x \sts{R_1+R_2}} -
\frac12 i R_1^2\overline{\partial_xR_1} - \frac12 \i
R_2^2\overline{\partial_xR_2}
\label{eq:winter:b}\\
&- \frac{3}{16}|R_1+R_2|^4\sts{R_1+R_2} + \frac{3}{16} |R_1|^4R_1 +
\frac{3}{16}|R_2|^4R_2
\label{eq:winter:c}\\
&+ H.O.T. \notag
\end{align}
Because $R_1(t,x)$ and $R_2(t,x)$ have exponential decay and their
centers locate far away from each other (distance is at least $L/4$
from \eqref{tsl:cent:dist}), we know that
\eqref{eq:winter:a}-\eqref{eq:winter:c} are weak interaction terms
between $R_1(t,x)$ and $R_2(t,x)$.

Now we will combine the above equation about $\varepsilon(t,x)$ with
the orthogonal structures \eqref{tsl:orth:nondeg},
\eqref{tsl:orth:sym} for $k=1,2$ to show \eqref{tsln:para:dyn}. On
one hand, multiplying the above equation by
$\overline{R_{k}\sts{t}}$, $\overline{\i
\partial_x R_{k}\sts{t} + \frac{1}{2} \abs{R_{k}}^2R_{k}\sts{t}}$, $\overline{\partial_{x}R_{k}\sts{t}},$
$\overline{\i~R_{k}\sts{t}},$ for $k=1,2$,  and  taking the
imaginary part, we have from \eqref{tsln:sl:rough} and $ \det\Xi_{k,k}
> 0$, $k=1,2$ that
\begin{multline}
\label{tsln:para:dynr}
    \abs{\dot{\omega}_{k}\sts{t}}~
            +~\abs{\dot{c}_{k}\sts{t}}~
            +~\abs{ \dot{x}_{k}\sts{t}-c_{k}\sts{t}}~
            +~\abs{ \dot{\gamma_{k}}\sts{t}-\omega_{k}\sts{t} }~
        \leq
       C \sts{ \|\varepsilon(t)\|_{H^1}
                +
                e^{ -\theta_{1}  \frac{L}{4}   }
            },
\end{multline}
where $\|\varepsilon(t)\|_{H^1}$ comes from the contribution of the
linear terms $i\partial_t \varepsilon  + \mathcal{L} \varepsilon $
and $H.O.T$ term, and $e^{ -\theta_{1}  \frac{L}{4}   }$ comes from
the weak interaction terms in
\eqref{eq:winter:a}-\eqref{eq:winter:c}, especially the fact that
$x_2(t)-x_1(t)\geq \frac{L}{4}$. On the other hand, in order to get
the precise estimate, we estimate $x_2(t)-x_1(t)$ as following
\begin{align*}
       \dot{x}_{2}\sts{t}-\dot{x}_{1}\sts{t}
&
    =    \sts{ \dot{x}_{2}\sts{t}-c_{2}\sts{t}} -\sts{ \dot{x}_{1}\sts{t}-c_{1}\sts{t}} + \sts{ c_{2}\sts{t}-c_{1}\sts{t} }
\\
&
    \geq \sts{ c_{2}\sts{t}-c_{1}\sts{t} } - 2 C \left(C_{II}
    \alpha + e^{ -2\theta_{1}   \frac{L}{4}} \right)
\\
&
    \geq \sts{ c_{2}^{0}-c_{1}^{0} } -
     2~C_{II}\alpha - 2 C \left(C_{II}
    \alpha + e^{ -2\theta_{1}   \frac{L}{4}} \right)
\\
&
    \geq \frac{ c_{2}^{0}-c_{1}^{0} }{4} \geq \theta_1
\end{align*}
for sufficient small $\alpha$ and large $L$. This yields
\begin{align*}
    x_{2}\sts{t} - x_{1}\sts{t}
&
    =
    x_{2}\sts{0} - x_{1}\sts{0}
    +
    \int_{0}^{t}\sts{ \dot{x}_{2}\sts{s}-\dot{x}_{1}\sts{s} }\;\mathrm{d}s
    \geq \frac{L}{2} + \theta_1 t,
\end{align*}
from which we can obtain \eqref{tsl:cent:dist} and
\eqref{tsln:para:dyn}.
\end{proof}


\section{Monotonicity formulas for the derivative NLS}\label{sect:mono forms}
As shown in Section \ref{sect:stab:asln}, under the non-degenerate
condition
\begin{align*}
  \det d''\sts{~\omega~,~c~}  &<0,
\end{align*}
there are two key points to show the stability of the solution of
\eqref{DNLS} around the single traveling wave in the energy space
besides of the modulation analysis. One is the action functional $
\mathfrak{J}_{\omega(0), c(0)}\sts{ u\sts{t} }$, which is a
conserved quantity, and is used to obtain the refined estimate of
$\normhone{ \varepsilon\sts{t} } $ by the perturbation argument,
\begin{align*}
\normhone{ \varepsilon\sts{t} }^{2}
  \leq
       C \normhone{ \varepsilon\sts{0} }^{2}
    +   C\sts{\abs{ \omega\sts{t}-\omega\sts{0} }^{2}
    +   \abs{ c\sts{t}-c\sts{0} }^{2}}.
\end{align*}
The other is the conservation laws of mass and momentum\footnote{Of
course, $\omega(t)$ and $c(t)$ are related to the mass and momentum
of the corresponding traveling wave at time $t$.}, which is enough
to show the refined estimate that
\begin{align*}
  \abs{\omega\sts{t}-\omega\sts{0}} + \abs{c\sts{t}-c\sts{0}}\leq C
    \sts{
        \normhone{\varepsilon\sts{t}}^{2}+\normhone{\varepsilon\sts{0}}^{2}
    }.
\end{align*}

As for the multi-traveling waves case, inspired by the ideas in
\cite{MartelMT:Stab:NLS}, we will introduce the localized action
functional $ \mathfrak{E}\sts{u\sts{t}}=
    E\sts{u\sts{t}}+\mathfrak{F}\sts{t}$, which is almost conserved and
    can be used to obtain the refined estimate of $\normhone{ \varepsilon\sts{t} } $ by
the perturbation argument. Since it is not enough to refine the
estimates of $\abs{\omega_k\sts{t}-\omega_k\sts{0}} +
\abs{c_k\sts{t}-c_k\sts{0}}$, $k=1, 2$, only by the conservation
laws of mass and momentum, we need to introduce some kinds of the
localized functionals $\mathfrak{Q}_{\pm,0}\sts{t}$ and
$\mathfrak{Q}_{0,\pm}\sts{t}$ and characterize its dynamical
estimate, which is related to $\abs{\omega_k\sts{t}-\omega_k\sts{0}}
+ \abs{c_k\sts{t}-c_k\sts{0}}$. They are the main goals in this
section.

\subsection{Monotone result for the line $x = \overline{x}^{0}+\sigma t $}  First of all, we introduce a suitable
cutoff function $h,$ which is a nondecreasing $\mathcal{C}^{3}$
function with
\begin{align*}
    h\sts{ x }
    =
    \left\{
        \begin{array}{ll}
           0        ,           & \text{ if } x<-1,                \\
           1        ,           & \text{ if } x>1,
        \end{array}
    \right.
\end{align*}
\begin{align} \label{est:ctff}
    \sts{ h'\sts{x} }^{2}\leq C h\sts{x},
    \quad
    \sts{ h''\sts{x} }^{2}\leq C h'\sts{x},\quad \text{for }x\in\R,
\end{align}
and
\begin{align*}
    h'\sts{x}>0,\quad\text{for }x\in\sts{-1~,~1}.
\end{align*}
Next, by setting\footnote{By the assumption (c) in Theorem
\ref{thm:stab:tsln}, we know that $c^0_1 < \sigma < c^0_2$.}
\begin{align*}
    \overline{x}^{0}=\frac{ x_{1}^{0}+x_{2}^{0} }{2},
    \quad
    \sigma=2\frac{ \omega_{2}\sts{0}-\omega_{1}\sts{0} }{ c_{2}\sts{0}-c_{1}\sts{0} },
    \quad
    a = \frac{L^{2}}{64},
\end{align*}
and
\begin{align}\label{ctf}
    \mathfrak{h} \sts{t,x} &=h\sts{ \frac{x-\overline{x}^{0}-\sigma t }{\sqrt{t+a}} },
    \quad
    \mathfrak{g}\sts{t,x}=1-\mathfrak{h}\sts{t,x},
\end{align}
we define the functional $\mathfrak{F}\sts{t}$, including the
localized mass and momentum about each traveling wave,
\begin{align}
\label{acf:lMM}
  \mathfrak{F}\sts{t}
\notag
    =
    &
        \omega_{1}\sts{0}\int \frac{1}{2}\abs{u\sts{t}}^{2}\mathfrak{g}\sts{t}\;\dx
    + c_{1}\sts{0} \int\sts{ -\frac12 \Im \left(\overline{u}\partial_{x}u\right) +\frac18
    \abs{u}^{4}}\sts{t}\mathfrak{g}\sts{t}\;\dx
\\
&   + \omega_{2}\sts{0} \int\frac{1}{2}
\abs{u\sts{t}}^{2}\mathfrak{h}\sts{t}\;\dx + c_{2}\sts{0}
        \int\sts{   -   \frac{1}{2}\; \Im \left(\overline{u}\partial_{x}u\right)+\frac18\abs{u}^{4}
        }\sts{t}\mathfrak{h}\sts{t}\;\dx
\end{align}
A simple calculation gives us that
\begin{align}
\label{acf:monf} \notag
    \mathfrak{F}\sts{t}
= &\;  \omega_{1}\sts{0}
        \int
            \frac{1}{2}\abs{u\sts{t}}^{2}
        \;\dx +c_{1}\sts{0}
        \int  \sts{-   \frac{1}{2}\Im \left(
            \overline{u}\partial_{x}u\right)\sts{t} + \frac18 \abs{u\sts{t}}^{4} }
        \;\dx
\\
\notag
&
    + \sts{\omega_{2}\sts{0}-\omega_{1}\sts{0} }
        \int
            \frac{1 }{2}\abs{u\sts{t}}^{2}\mathfrak{h}\sts{t}
        \;\dx
    \\
    &+ \sts{c_{2}\sts{0}-c_{1}\sts{0}}
       \int\sts{   -   \frac{1}{2}\; \Im\left(\overline{u}\partial_{x}u\right)+\frac18\abs{u}^{4} }\sts{t} \mathfrak{h}\sts{t}
        \;\dx \notag
\\
=
&
   \; \omega_{1}\sts{0}\mass{u\sts{t}} + c_{1}\sts{0}\momentum{u\sts{t}}+\mathfrak{Q}\sts{t},
\end{align}
where
\begin{align*}
\mathfrak{Q}\sts{t} =&
      \sts{\omega_{2}\sts{0}-\omega_{1}\sts{0} }
        \int\frac{ 1}{2}
            \abs{u\sts{t}}^{2}\mathfrak{h}\sts{t}
        \;\dx
\\
& + \sts{c_{2}\sts{0}-c_{1}\sts{0}}
       \int\sts{   -   \frac{1}{2}\; \Im\left(\overline{u}\partial_{x}u\right)+\frac18\abs{u}^{4} }\sts{t} \mathfrak{h}\sts{t}
        \;\dx.
\end{align*}

Next, we have
\begin{lemma}
\label{lem:monf:dert}
    Let $u\sts{t}$ be a solution of \eqref{DNLS} satisfying the assumption of Lemma \ref{lem:tsln:decomp} on $[0, T_0]$.
    Then there exist $C>0$ such that
\begin{align*}
     \frac{\mathrm{d}}{\mathrm{d}t}\mathfrak{Q}\sts{t}
\leq  \;
    \frac{C}{\sts{t+a}^{3/2} }
          \sts{  \int_{ \abs{x-\overline{x}^{0}-\sigma t}<\sqrt{t+a} }
                \abs{u(t,x)}^{2}
            \;\dx + \left(\int_{ \abs{x-\overline{x}^{0}-\sigma t}<\sqrt{t+a} }
                \abs{u(t,x)}^{2}
            \;\dx \right)^3}.
\end{align*}
\end{lemma}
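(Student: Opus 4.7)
The plan is to prove this monotonicity by combining (i) the local conservation laws of mass and momentum derived from \eqref{DNLS}, (ii) the algebraic identity
\[
\partial_{t}\mathfrak h+\sigma\,\partial_{x}\mathfrak h=-\frac{z\,h'(z)}{2(t+a)},\qquad z=\frac{x-\overline x^{0}-\sigma t}{\sqrt{t+a}},
\]
which follows from differentiating \eqref{ctf} directly, and (iii) the cancellation forced by the specific choice $\sigma=2(\omega_{2}(0)-\omega_{1}(0))/(c_{2}(0)-c_{1}(0))$, which turns what is naively an $O((t+a)^{-1/2})$ time derivative into the claimed $O((t+a)^{-3/2})$ bound.

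The first step is to derive from \eqref{DNLS}, by multiplying by $\overline u$ and by $\overline{\partial_{x}u}$ and taking imaginary/real parts, the pointwise conservation laws
\[
\partial_{t}\rho_{M}+\partial_{x}J_{M}=0,\qquad \partial_{t}\rho_{P}+\partial_{x}J_{P}=0,
\]
with $\rho_{M}=\tfrac12|u|^{2}$, $J_{M}=\Im(\overline u\,\partial_{x}u)$, $\rho_{P}=-\tfrac12\Im(\overline u\,\partial_{x}u)+\tfrac18|u|^{4}$, and $J_{P}$ an explicit polynomial in $u,\overline u,\partial_{x}u,\partial_{x}\overline u$ (containing $|\partial_{x}u|^{2}$, $|u|^{2}\Im(\overline u\,\partial_{x}u)$ and $|u|^{6}$-type terms). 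Inserting these into the representation \eqref{acf:monf}, differentiating, and integrating by parts to shift $\partial_{x}$ off the fluxes onto the cutoff yields
\[
\frac{d}{dt}\mathfrak Q(t)=\int(\Delta\omega\,\rho_{M}+\Delta c\,\rho_{P})\,\partial_{t}\mathfrak h\,dx+\int(\Delta\omega\,J_{M}+\Delta c\,J_{P})\,\partial_{x}\mathfrak h\,dx,
\]
where $\Delta\omega=\omega_{2}(0)-\omega_{1}(0)$ and $\Delta c=c_{2}(0)-c_{1}(0)$.

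Using the cutoff identity to eliminate $\partial_{t}\mathfrak h$ rewrites the right-hand side as
\[
\int\bigl[\Delta\omega(J_{M}-\sigma\rho_{M})+\Delta c(J_{P}-\sigma\rho_{P})\bigr]\partial_{x}\mathfrak h\,dx-\frac{1}{2(t+a)}\int(\Delta\omega\,\rho_{M}+\Delta c\,\rho_{P})\,z\,h'(z)\,dx.
\]
The relation $\sigma\Delta c=2\Delta\omega$ is exactly what makes the quadratic-in-$u$ part of the first bracket a perfect spatial derivative $\partial_{x}\Phi[u]$; one further integration by parts then transfers the remaining derivative onto $\mathfrak h$, producing $\partial_{x}^{2}\mathfrak h=h''(z)/(t+a)$ and hence the extra $1/\sqrt{t+a}$ factor needed for the $(t+a)^{-3/2}$ decay. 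The residual higher-order (quartic and sextic) contributions in both integrals are controlled by the uniform $H^{1}$-bound $\|u(t)\|_{H^{1}}\leq C$ inherited from \eqref{tsln:sl:rough}, together with 1D Sobolev embedding $\|u\|_{L^{\infty}}\leq C\|u\|_{H^{1}}$ and the cutoff inequalities $(h')^{2}\leq Ch$, $(h'')^{2}\leq Ch'$ from \eqref{est:ctff}; a localized Gagliardo--Nirenberg estimate on the strip of width $\sqrt{t+a}$ converts the sextic piece into the $\bigl(\int_{|z|<1}|u|^{2}\bigr)^{3}$ contribution.

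The main obstacle is the heavy but purely algebraic bookkeeping needed to write $J_{P}$ explicitly and to verify that the quadratic-in-$u$ part of $\Delta\omega(J_{M}-\sigma\rho_{M})+\Delta c(J_{P}-\sigma\rho_{P})$ really does collapse to a perfect $\partial_{x}$-derivative precisely when $\sigma\Delta c=2\Delta\omega$. Once that cancellation is identified, the remaining estimates reduce to routine applications of Cauchy--Schwarz, Sobolev embedding, and the cutoff bounds \eqref{est:ctff}, all of which are confined to the strip $|x-\overline x^{0}-\sigma t|<\sqrt{t+a}$ by the support of $h'$ and $h''$.
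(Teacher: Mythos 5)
Your overall framework---local conservation laws $\partial_t\rho_M+\partial_xJ_M=0$, $\partial_t\rho_P+\partial_xJ_P=0$, the transport identity $\partial_t\mathfrak h+\sigma\partial_x\mathfrak h=-\tfrac{zh'(z)}{2(t+a)}$, and the regrouping into $\int[\Delta\omega(J_M-\sigma\rho_M)+\Delta c(J_P-\sigma\rho_P)]\partial_x\mathfrak h$ plus a $zh'$ term---is a correct reformulation of what the paper does by direct differentiation. But your key structural claim is wrong, and the error is fatal to the rest of the argument. The quadratic part of $\Delta\omega(J_M-\sigma\rho_M)+\Delta c(J_P-\sigma\rho_P)$ does \emph{not} collapse to a perfect spatial derivative when $\sigma\Delta c=2\Delta\omega$. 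Writing $v=e^{-i\sigma x/2}u$, one finds
\begin{equation*}
\Delta\omega\bigl(J_M-\sigma\rho_M\bigr)+\Delta c\bigl(J_P-\sigma\rho_P\bigr)\Big|_{\text{quadratic}}
=-\Delta c\,\abs{\partial_x v}^2+\frac{\Delta c}{4}\,\partial_x^2\abs{u}^2 ,
\end{equation*}
i.e.\ the choice of $\sigma$ completes a \emph{square of fixed (negative) sign}, not a total derivative; only the residual $\tfrac{\Delta c}{4}\partial_x^2\abs{u}^2$ piece is a perfect derivative, and to extract $(t+a)^{-3/2}$ from it you must integrate by parts \emph{twice}, landing on $h'''/(t+a)^{3/2}$ --- your single integration by parts onto $h''/(t+a)$ only yields $(t+a)^{-1}$, which is not integrable against the $1/L$ gain you ultimately need.

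The negative term $-\tfrac{\Delta c}{\sqrt{t+a}}\int\abs{\partial_x v}^2h'$ that you have discarded is indispensable. The contributions you propose to handle by ``uniform $H^1$ bound + Sobolev + localized Gagliardo--Nirenberg'' all contain a factor of $\partial_x v$ (or $\abs{v}^6$, $\abs{v}^4$ whose localized estimates produce $\int\abs{\partial_x v}^2h'$ on the right-hand side), and without absorption into the negative square they only decay like $(t+a)^{-1/2}$ or $(t+a)^{-1}$: e.g.\ $\tfrac{1}{\sqrt{t+a}}\int\abs{v}^6h'\le \tfrac{C}{\sqrt{t+a}}\int_{\mathrm{strip}}\abs{v}^2$ by $L^\infty$ bounds alone, and the term $\tfrac{1}{2(t+a)}\int\rho_P\,zh'$ contains $\Im(\overline v\partial_xv)$, which requires the Cauchy--Schwarz splitting $\tfrac{1}{t+a}\abs{v}\abs{v_x}\le\tfrac{\epsilon}{\sqrt{t+a}}\abs{v_x}^2+\tfrac{C}{(t+a)^{3/2}}\abs{v}^2$ with the first piece absorbed. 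This absorption is exactly how the paper converts every remaining term into the claimed $(t+a)^{-3/2}$ bound. Finally, you never use the sign condition $\sigma>0$: the quartic term $-\tfrac{\sigma}{8\sqrt{t+a}}\int\abs{v}^4h'$ is discarded precisely because it is nonpositive, and the paper notes explicitly that without this sign one is stuck at the non-integrable rate $(t+a)^{-1}$. So the cancellation you need to ``verify by bookkeeping'' is not the one you stated, and the plan as written cannot reach the $(t+a)^{-3/2}$ rate.
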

\begin{proof}By
 \eqref{DNLS}, we have
\begin{align*}
 \frac{1}{c_2(0)-c_1(0)}  &  \frac{\mathrm{d}}{\mathrm{d}t}\mathfrak{Q}\sts{t}
\\
= &\;   \frac{
            \sigma}{
            2\sqrt{t+a}
        }
       \int \Im
            \sts{\overline{u(t)}\partial_{x} u(t)}
            \;
            h'\sts{ \frac{x-\overline{x}^{0}-\sigma t }{\sqrt{t+a}} }
        \;\dx
        \\
&   +\frac14 \sigma  \int
            \abs{u\sts{t}}^{2}
            \;
            h'\sts{ \frac{x-\overline{x}^{0}-\sigma t }{\sqrt{t+a}}
            }\left[-\frac{\sigma}{\sqrt{t+a}} - \frac{x-\overline{x}^{0}-\sigma t }{2\sts{t+a}^{3/2}}\right]
        \;\dx
\\
& -   \frac{1 }{ \sqrt{t+a} }
        \int
            \abs{\partial_{x} u\sts{t}}^{2}
            \;
            h'\sts{ \frac{x-\overline{x}^{0}-\sigma t }{\sqrt{t+a}} }
        \;\dx
\\
&
    +   \frac{1}{4\sts{t+a}^{3/2}}
        \int
            \abs{u\sts{t}}^{2}
            \;
            h'''\sts{ \frac{x-\overline{x}^{0}-\sigma t }{\sqrt{t+a}} }
        \;\dx
\\
&
    +   \frac{1}{16 \sqrt{t+a} }
        \int
            \abs{u\sts{t}}^{6}
            \;
            h'\sts{ \frac{x-\overline{x}^{0}-\sigma t }{\sqrt{t+a}} }
        \;\dx
\\
&
   -\frac12
        \int
           \Im \sts{\overline{u(t)}\partial_{x} u(t)}
            \;
            h'\sts{ \frac{x-\overline{x}^{0}-\sigma t }{\sqrt{t+a}} }
            \left[-\frac{\sigma}{\sqrt{t+a}} - \frac{x-\overline{x}^{0}-\sigma t }{2\sts{t+a}^{3/2}}\right]
        \;\dx
\\
&
   +  \frac18
        \int
            \abs{u\sts{t}}^{4}
            \;
            h'\sts{ \frac{x-\overline{x}^{0}-\sigma t }{\sqrt{t+a}} }
            \left[-\frac{\sigma}{\sqrt{t+a}} - \frac{x-\overline{x}^{0}-\sigma t }{2\sts{t+a}^{3/2}}\right]
        \;\dx.
\end{align*}
Now by introducing
\begin{align*}
v\sts{t,x}\triangleq ~e^{-\i\frac{1}{2}\; \sigma x }u\sts{t,x},
\end{align*}
we have
\begin{align}
  \frac{1}{c_2(0)-c_1(0)}  &
  \frac{\mathrm{d}}{\mathrm{d}t}\mathfrak{Q}\sts{t} \label{est:monf}
\\
=&  -   \frac{1 }{ \sqrt{t+a} }
        \int
            \abs{\partial_{x} v\sts{t}}^{2}
            \;
            h'\sts{ \frac{x-\overline{x}^{0}-\sigma t }{\sqrt{t+a}} }
        \;\dx \notag
\\
& +\frac{ 1}{
            4(t+a)
        }
       \int \Im
            \sts{\overline{v(t)}\partial_{x} v(t)}
            \;
            h'\sts{ \frac{x-\overline{x}^{0}-\sigma t }{\sqrt{t+a}}
            } \frac{x-\overline{x}^{0}-\sigma t }{\sqrt{t+a}}
        \;\dx \label{est:im}
\\
&
    +   \frac{1}{4\sts{t+a}^{3/2}}
        \int
            \abs{v\sts{t}}^{2}
            \;
            h'''\sts{ \frac{x-\overline{x}^{0}-\sigma t }{\sqrt{t+a}} }
        \;\dx \notag
\\
&
    +   \frac{1}{16 \sqrt{t+a} }
        \int
            \abs{v\sts{t}}^{6}
            \;
            h'\sts{ \frac{x-\overline{x}^{0}-\sigma t }{\sqrt{t+a}} }
        \;\dx \label{est:sth}
\\
&
   +  \frac18
        \int
            \abs{v\sts{t}}^{4}
            \;
            h'\sts{ \frac{x-\overline{x}^{0}-\sigma t }{\sqrt{t+a}} }
            \left[-\frac{\sigma}{\sqrt{t+a}} - \frac{x-\overline{x}^{0}-\sigma t }{2\sts{t+a}^{3/2}}\right]
        \;\dx. \label{est:fth}
\end{align}

\step{Estimate of \eqref{est:im}} By the Cauchy-Schwarz inequality
and the support property of $h'$, we have
\begin{align}
 &  \left|  \frac{ 1}{
            4(t+a)
        }
       \int \Im
            \sts{\overline{v(t)}\partial_{x} v(t)}
            \;
            h'\sts{ \frac{x-\overline{x}^{0}-\sigma t }{\sqrt{t+a}}
            } \frac{x-\overline{x}^{0}-\sigma t }{\sqrt{t+a}}
        \;\dx \right| \label{est:im:ok}
\\
\leq &\;
    \frac{ 1 }{ 8 \sqrt{t+a} }
        \int \abs{\partial_{x}v\sts{t}}^{2}
            h'\sts{ \frac{x-\overline{x}^{0}-\sigma t }{\sqrt{t+a}} }
        \;\dx \notag
            +
    \frac{C}{\sts{t+a}^{3/2} }
           \int_{ \abs{x-\overline{x}^{0}-\sigma t}<\sqrt{t+a} }
                \abs{v\sts{t}}^{2}
            \;\dx. \notag
\end{align}

\step{Estimate of \eqref{est:sth}}By \eqref{est:ctff}, Lemma $3.3$
in \cite{MartelMT:Stab:NLS}, we have
\begin{align} \label{est:sth:ok}
 & \frac{1}{16 \sqrt{t+a} }
        \int
            \abs{v\sts{t}}^{6}
            \;
            h'\sts{ \frac{x-\overline{x}^{0}-\sigma t }{\sqrt{t+a}} }
        \;\dx
\\
        \leq  &\;
    \frac{ 1 }{ 8 \sqrt{t+a} }
        \int \abs{\partial_{x}v(t)}^{2}
            h'\sts{ \frac{x-\overline{x}^{0}-\sigma t }{\sqrt{t+a}} }
        \;\dx
+
    \frac{C}{ \sts{t+a}^{3/2} }
    \sts{   \int_{ \abs{x-\overline{x}^{0}-\sigma t}<\sqrt{t+a} }
                \abs{v(t)}^{2}
            \;\dx
    }^{3}. \notag
\end{align}

Before we estimate the fourth order term, we first give the
following useful lemma, which is similar to Lemma $3.3$ in
\cite{MartelMT:Stab:NLS} for the sixth order term.
\begin{lemma}
\label{lem:est:fth} Let $h(x)\geq 0$ be a $\mathcal{C}^{2}$ bounded
function. Then for any $w\in H^{1},$ we have
    \begin{align*}
        \int\abs{w(x)}^{4}h(x)\;\dx
    \leq
       C   \sts{ \int \abs{w_{x}}^{2}h \;\dx }^{ 1/2}
            \sts{ \int_{\mathrm{supp}\; h}\abs{w}^{2}\;\dx }^{3/2}
        +   \int \abs{w}^{2}h_{x}\;\dx \int_{\mathrm{supp}\;
        h}\abs{w}^{2}\;\dx,
  \end{align*}
  where $\mathrm{supp}\; h$ denotes the support of $h$.
\end{lemma}
\begin{proof}First, the Leibnitz rule gives us that
\begin{align*}
    \frac{ \mathrm{d} }{ \mathrm{d}x }\sts{ \abs{w}^{2}h }
&
    =
    2h\Re\sts{ \overline{w}w_{x} } + \abs{w}^{2}h_{x},
\end{align*}
therefore, it follows from $w\in H^{1}(\R)$ and the boundedness of
$h$ that
\begin{align*}
  \sts{ \abs{w}^{2}h }\sts{x} &=
        2\int_{-\infty}^{x}h\Re\sts{ \overline{w}w_{x} }\;\dx
    +   \int_{-\infty}^{x}\abs{w}^{2}h_{x}\;\dx,
\end{align*}
which implies that
\begin{align*}
    \left\|~\abs{w}^{2}h~\right\|_{ L^{\infty} }
&   \leq
    2\int\abs{w}\abs{w_{x}}h\;\dx + \int\abs{w}^{2}\abs{h_{x}}\;\dx
\\
&   \leq
    2\sts{ \int\abs{w_{x}}^{2}h }^{1/2}\sts{ \int\abs{w}^{2}h }^{1/2}
    +
    \int\abs{w}^{2}\abs{h_{x}}\;\dx.
\end{align*}
Therefore,  we have
\begin{align*}
    \int\abs{w}^{4}h\;\dx
\leq &
    \left\|~\abs{w}^{2}h~\right\|_{ L^{\infty} }\int_{\mathrm{supp}\;h}\abs{w}^{2}
\\
    \leq &
    \sts{
            2   \sts{ \int \abs{w_{x}}^{2}h \;\dx }^{1/2 }
                \sts{ \int_{\mathrm{supp}\;h}\abs{w}^{2}h \;\dx }^{ 1/2}
            +   \int \abs{w}^{2}h_{x}\;\dx
        }
   \cdot
            \int_{\mathrm{supp}\;h}\abs{w}^{2}
\\
    \leq &\;
       C  \sts{ \int \abs{w_{x}}^{2}h \;\dx }^{ 1/2 }
            \sts{ \int_{\mathrm{supp}\;h}\abs{w}^{2} \;\dx }^{3/2}
        +   \int \abs{w}^{2}h_{x}\;\dx
        \int_{\mathrm{supp}\;h}\abs{w}^{2}\;\dx.
\end{align*}
This completes the proof.
\end{proof}

\step{Estimate of \eqref{est:fth} } First, by the fact that $h'\geq
0$ and $\sigma>0$, we have\footnote{Here we throw away the first
term for $\sigma>0$ because of $c^0_2>c^0_1>0$ and
$\omega^0_2>\omega^0_1$. In general case, we cannot use the estimate
in Lemma \ref{lem:est:fth} since the resulted decay in $t$ is just
$(t+a)^{-1}$, which is non-integral(critical).}
\begin{align}\label{est:fth:a}
- \frac18 \int
            \abs{v\sts{t}}^{4}
            \;
            h'\sts{ \frac{x-\overline{x}^{0}-\sigma t }{\sqrt{t+a}}
            } \frac{\sigma}{\sqrt{t+a}}
                    \;\dx \leq 0.
\end{align}
Second, by the boundedness of $h'$ and $h'',$ and the fact that
$\mathrm{supp}\;h'\subset\sts{-1~,~1},$
 $h'\geq 0$, Lemma \ref{lem:est:fth} and the Cauchy-Schwarz inequality, we have
\begin{align}\label{est:fth:b}
& \left|- \frac18
        \int
            \abs{v\sts{t}}^{4}
            \;
            h'\sts{ \frac{x-\overline{x}^{0}-\sigma t }{\sqrt{t+a}} }
            \frac{x-\overline{x}^{0}-\sigma t }{2\sts{t+a}^{3/2}}
        \;\dx \right|
\\
\leq &\; \frac1{16(t+a)}
        \int
            \abs{v\sts{t}}^{4}
            \;
            h'\sts{ \frac{x-\overline{x}^{0}-\sigma t }{\sqrt{t+a}} }
                  \;\dx \notag
                  \\
\leq &\; \frac{C}{t+a}
           \sts{
            \int \abs{\partial_{x}v(t)}^{2}
                h'\sts{ \frac{x-\overline{x}^{0}-\sigma t}{\sqrt{t+a}} }
            \;\dx
            }^{1/2}
            \sts{
            \int_{ \abs{x-\overline{x}^{0}-\sigma t}<\sqrt{t+a} } \abs{v(t)}^{2}
            \;\dx
            }^{3/2} \notag
\\
\notag &\quad  +
    \frac{ 1 }{ 16\sts{t+a}^{3/2} }
    \sts{
        \int_{ \abs{x-\overline{x}^{0}-\sigma t}<\sqrt{t+a} }
            \abs{v(t)}^{2}
        \;\dx
    }^{2}
\\
\leq &\;
    \frac{ 1}{8\sqrt{t+a} }
        \int \abs{\partial_{x}v(t)}^{2}
            h'\sts{ \frac{x-\overline{x}^{0}-\sigma t }{\sqrt{t+a}} }
        \;\dx \notag
        \\
    & \quad +
    \frac{C}{ \sts{t+a}^{3/2 }}
    \sts{   \int_{ \abs{x-\overline{x}^{0}-\sigma t}<\sqrt{t+a} }
                \abs{v(t)}^{2}
            \;\dx
    }^{2} +
    \frac{C}{ \sts{t+a}^{3/2 }}
    \sts{   \int_{ \abs{x-\overline{x}^{0}-\sigma t}<\sqrt{t+a} }
                \abs{v(t)}^{2}
            \;\dx
    }^{3} . \notag
\end{align}

Inserting \eqref{est:im:ok}, \eqref{est:sth:ok}, \eqref{est:fth:a}
and \eqref{est:fth:b} into \eqref{est:monf}, we can obtain the
result by the fact that $\abs{v(t)}=\abs{u(t)}$.
\end{proof}

Next, we turn to compute the local mass of the solution $u(t,x)$ to
\eqref{DNLS} in the region $\abs{x-\overline{x}^{0}-\sigma
t}<\sqrt{t+a} $.

\begin{lemma}
\label{lem:lm:sm}
 Let $u\sts{t}$ be a solution of \eqref{DNLS} satisfying the assumption of Lemma \ref{lem:tsln:decomp} on $[0, T_0]$.
 Then there exist $C$, $L_2$, $\alpha_2>0$   such that if $L\geq L_2$ and $0<\alpha<\alpha_2$ we have
\begin{align}\label{est:mass}
    \int_{ \abs{x-\overline{x}^{0}-\sigma t}<\sqrt{t+a} }
        \abs{u(t,x)}^{2}
    \;\dx \leq 2\int \abs{\varepsilon\sts{t,x}}^{2}\;\dx+ C e^{ -\theta_2\sts{L + \theta_2 t} }  < 1
\end{align}
 for
$t\in\left[0, T_{0}\right]$, where
$$0<\theta_2\triangleq \frac{1}{16}\min\left(\sigma-c^0_1,\; c^0_2-\sigma,\;
\frac{\sqrt{4\omega^0_1-(c^0_1)^2}}{4},\;
\frac{\sqrt{4\omega^0_2-(c^0_2)^2}}{4}\right)<\frac{\theta_1}{4}.$$
\end{lemma}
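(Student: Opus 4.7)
The plan is to substitute the modulation decomposition $u(t,x) = R_1(t,x) + R_2(t,x) + \varepsilon(t,x)$ from Lemma~\ref{lem:tsln:decomp} into the local mass integral and apply the pointwise inequality $|u|^2 \leq 2|\varepsilon|^2 + 4|R_1|^2 + 4|R_2|^2$. Writing $\mathcal{S}_t \triangleq \{x : |x-\overline{x}^0-\sigma t|<\sqrt{t+a}\}$, this gives
\[
\int_{\mathcal{S}_t}|u(t,x)|^{2}\,dx \leq 2\|\varepsilon(t)\|_{L^{2}}^{2} + 4\sum_{k=1,2}\int_{\mathcal{S}_t}|R_k(t,x)|^{2}\,dx,
\]
so the $\varepsilon$-term is already the first contribution on the right-hand side of \eqref{est:mass}, and the whole task reduces to showing that each $\int_{\mathcal{S}_t}|R_k|^{2}\,dx$ is exponentially small in $L+t$.

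The crucial step is to locate the soliton center $x_k(t)$ relative to the midline $x = \overline{x}^0+\sigma t$. From the parameter dynamics \eqref{tsln:para:dyn} one has $|\dot{x}_k(s)-c_k(s)| \leq C(\|\varepsilon(s)\|_{H^{1}} + e^{-\theta_1(L/2+\theta_1 s)})$, while \eqref{tsln:sl:rough} yields $|c_k(s)-c_k^0|\leq C_{II}\alpha$. Integrating in $s \in [0,t]$, combining with $\overline{x}^0-x_1^0 = x_2^0-\overline{x}^0 \geq L/2$, and invoking hypothesis $(c)$ of Theorem~\ref{thm:stab:tsln} in the form $c_1^0<\sigma<c_2^0$, I obtain, for $\alpha$ small enough,
\[
|x_k(t)-\overline{x}^0-\sigma t| \geq \tfrac{L}{2} + \tfrac{1}{2}\min(\sigma-c_1^0,\,c_2^0-\sigma)\,t - C_{\ast}.
\]
Subtracting the half-width $\sqrt{t+a}\leq \sqrt{t}+L/8$ of $\mathcal{S}_t$ and taking $L$ large, this yields $\mathrm{dist}(\mathcal{S}_t,\,x_k(t)) \geq c_{0}(L+\theta_2 t)$ for some $c_0>0$ depending only on the gap between $\sigma$ and $c_1^0, c_2^0$.

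Coupled with the explicit exponential decay $\phi_{\omega_k(t),c_k(t)}^{2}(y) \leq C e^{-\sqrt{4\omega_k(t)-c_k(t)^{2}}\,|y|}$ read off from \eqref{tw:rsol}, together with $\sqrt{4\omega_k(t)-c_k(t)^{2}}\geq \tfrac{1}{2}\sqrt{4\omega_k^0-(c_k^0)^{2}}$ for small $\alpha$, this gives the pointwise bound $|R_k(t,x)|^{2} \leq C e^{-c_{1}(L+\theta_2 t)}$ on $\mathcal{S}_t$. Integrating over $\mathcal{S}_t$ (length $2\sqrt{t+a}$) and absorbing the polynomial factor into a marginally smaller exponent produces $\int_{\mathcal{S}_t}|R_k|^{2}\,dx \leq C e^{-\theta_2(L+\theta_2 t)}$, yielding \eqref{est:mass}. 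The strict inequality $<1$ then follows from $\|\varepsilon(t)\|_{L^{2}}\leq C_{II}\alpha$ by choosing $\alpha_2$ small and $L_2$ large.

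The main technical obstacle is that the $O(\alpha t)$ drift in $x_k(t)$, arising from the modulation error $|\dot{x}_k-c_k^0|\lesssim \alpha$, must be strictly dominated by the speed gap $\min(\sigma-c_1^0,\,c_2^0-\sigma)$; this forces the smallness threshold $\alpha_2$ to depend quantitatively on that gap, which is precisely why assumption $(c)$ of Theorem~\ref{thm:stab:tsln} is invoked at this point rather than just the weaker condition $c_1^0<c_2^0$.
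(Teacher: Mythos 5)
Your proposal is correct and follows essentially the same route as the paper's proof: decompose $u=R_1+R_2+\varepsilon$, bound the $\varepsilon$-contribution by $2\|\varepsilon(t)\|_{L^2}^2$, use the modulation equations \eqref{tsln:para:dyn}, \eqref{tsln:sl:rough} together with $c_1^0<\sigma<c_2^0$ (from assumption $(c)$) to show $|x-x_k(t)|\gtrsim L+\theta_2 t$ on the slab $|x-\overline{x}^0-\sigma t|<\sqrt{t+a}$, and then invoke the exponential decay of $\varphi_{\omega_k(t),c_k(t)}$. The only cosmetic differences are the elementary inequality you use ($|u|^2\le 2|\varepsilon|^2+4|R_1|^2+4|R_2|^2$ versus the paper's grouping of $R_1+R_2$) and bookkeeping of constants, neither of which affects the argument.
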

\begin{proof}By \eqref{tsln:sl:rough}, the second inequality in \eqref{est:mass} is obvious
for sufficient small $\alpha$ and large $L$. We now show the first
inequality in \eqref{est:mass}. It follows from Lemma
\ref{lem:tsln:decomp} that
\begin{align*}
  u\sts{t,x}=\sum_{k=1}^{2}R_{k}\sts{t,x} + \varepsilon\sts{t,x},
\end{align*}
where $ R_{k}\sts{t,x}
  = \; \varphi_{\omega_{k}\sts{t},c_{k}\sts{t}}\sts{x-x_{k}\sts{t}}e^{\i\gamma_{k}\sts{t}}.
$

First, by the Sobolev inequality and \eqref{tsln:sl:rough}, we have
\begin{align*}
  \| \varepsilon\sts{t} \|_{L^{\infty}(\R)}\leq C\normhone{ \varepsilon\sts{t} }\leq C~C_{II}~\alpha.
\end{align*}
This yields $\| \varepsilon\sts{t} \|_{L^{\infty}}<\frac{1}{2}$ for
$\alpha$ is small enough.

Next, we estimate the contribution of the traveling waves. When $x$
is in the region $\abs{x-\overline{x}^{0}-\sigma
t}<\sqrt{t+a}\leq \sqrt{t} + \frac{L}{8},$ we have
\begin{align}
\label{eq:tw1:dist} \notag
    \abs{ x-x_{1}\sts{t} }
&
=
    \abs{ \sts{ x-\overline{x}^{0}-\sigma t } -\sts{ x_{1}\sts{t} -\overline{x}^{0}-\sigma t } }
\\
\notag
&
\geq
    \abs{ x_{1}\sts{t} -\overline{x}^{0}-\sigma t } - \abs{ x-\overline{x}^{0}-\sigma t }
\\
&
\geq
    \abs{ x_{1}\sts{t} -\overline{x}^{0}-\sigma t } - \sqrt{t} - \frac{L}{8}.
\end{align}
By \eqref{tsln:sl:rough} and \eqref{tsln:para:dyn}, we have for
sufficient small $\alpha$ and sufficient large $L$ that
\begin{align*}
    \frac{d}{dt}\sts{\overline{x}^{0}+\sigma t - x_{1}\sts{t}}
& =
    \sigma - \sts{ \dot{x}_{1}\sts{t} - c_{1}\sts{t} } - c_{1}\sts{t}
\\
&
\geq
    \sigma - C_{II}\left(C_{II}\alpha +
                e^{ -\theta_{1}\sts{ \frac{L}{2} + \theta_1 t } } \right) - c_{1}^0 - C_{II}~\alpha
\\
&
\geq
    \frac{\sigma - c_{1}^0}{2},
\end{align*}
and so,
\begin{align*}
    \overline{x}^{0}+\sigma t - x_{1}\sts{t}
 \geq
    \overline{x}^{0} - x_{1}\sts{0} + \frac{ \sigma - c_{1}^0}{2}\;t
\geq
    \frac{L}{4} + \frac{\sigma - c_{1}^0}{2}\;t.
\end{align*}
Now inserting the above estimate into \eqref{eq:tw1:dist},
 we obtain for $\alpha$ small enough and $L$ large enough that
\begin{align}
    \abs{ x-x_{1}\sts{t} }
 & \geq
     \overline{x}^{0}  +\sigma t - x_{1}\sts{t}   - \sqrt{t} -
     \frac{L}{8} \notag
\\
&
\geq
       \frac{L}{16} + \frac{\sigma - c_{1}^0}{4}\;t
    +
    \sts{ \frac{\sigma - c_{1}^0}{4}\;t - \sqrt{t} + \frac{L}{16}} \notag
\\
&
\geq
    \frac{L}{16} + \frac{\sigma - c_{1}^0}{4}\;t \geq \frac{L}{16} + 4 \theta_2 t
    , \label{efreg:h}
\end{align}
which implies that
\begin{align*}
  \abs{
        R_{1}\sts{t,x}
    }
    \leq
    C e^{ -\frac{ \sqrt{ 4\omega_{1}\sts{t}- c_{1}^{2}\sts{t}  }}{2}\abs{ x-x_{1}\sts{t} } }
    \leq
    C~
    e^{
        -\frac{ \sqrt{ 4\omega_{1}^0- \sts{c_{1}^0}^{2} } }{4}\sts{\frac{L}{16}
        + 4\theta_2 t}}
    \leq
    C~    e^{ - 16 \theta_2 \sts{\frac{L}{16}+ 4\theta_2 t} }.
\end{align*}
In a similar way, we have
\begin{align*}
  \abs{
        R_{2}\sts{t,x}
    }
    \leq
    C~    e^{ - 16 \theta_2 \sts{\frac{L}{16}+ 4\theta_2 t} }.
\end{align*}
Thus, we have
\begin{align*}
    \int_{ \abs{x-\overline{x}^{0}-\sigma t}<\sqrt{t+a} }
        \abs{u(t,x)}^{2}
    \;\dx
\leq & \; 2
        \int_{ \abs{x-\overline{x}^{0}-\sigma t}<\sqrt{t+a} }
            \abs{\varepsilon\sts{t,\,x}}^{2}
        \;\dx
       +  C~e^{
        -  \theta_2 \sts{L
        +  \theta_{2} t}
        }.
\end{align*}
This completes the proof.
\end{proof}
As a consequence of \eqref{tsln:sl:rough}, Lemma \ref{lem:monf:dert}
and Lemma \ref{lem:lm:sm}, we have
\begin{proposition}\label{prop:monf}  Let $u\sts{t}$ be a solution of \eqref{DNLS} satisfying the assumptions
 of Lemma \ref{lem:tsln:decomp} on $[0, T_0]$,
  then there exist $C$, $L_2$, $\alpha_2>0$   such that if $L\geq L_2$ and $0<\alpha<\alpha_2$ we have
    \begin{align*}
        \mathfrak{Q}\sts{t}-\mathfrak{Q}\sts{0}
        \leq
        \frac{C}{L}\sup_{0<s<t}\int\abs{\varepsilon\sts{s, x}}^{2} \; \dx + C~e^{-\theta_2 L} \quad \text{for}\; t\in\left[0, T_{0}\right].
    \end{align*}
    \end{proposition}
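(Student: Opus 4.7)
The plan is to obtain Proposition \ref{prop:monf} by simply integrating Lemma \ref{lem:monf:dert} over $[0,t]$ and invoking Lemma \ref{lem:lm:sm} to convert the local mass on the right-hand side into a quantity controlled by $\normto{\varepsilon(s)}^2$ plus an exponentially small error. The choice $a = L^2/64$ is what will eventually turn the time integral of $(s+a)^{-3/2}$ into the $C/L$ factor appearing in the statement.

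First, I would denote the local mass by $\MMM(t) \triangleq \int_{|x-\overline{x}^0-\sigma t|<\sqrt{t+a}} |u(t,x)|^2\,\dx$. The second inequality in Lemma \ref{lem:lm:sm} gives $\MMM(t) < 1$ for $\alpha<\alpha_2$ and $L\geq L_2$, so $\MMM(t)^3 \leq \MMM(t)$, and Lemma \ref{lem:monf:dert} collapses to
\begin{align*}
 \frac{d}{dt}\mathfrak{Q}(t) \leq \frac{C}{(t+a)^{3/2}}\,\MMM(t).
\end{align*}
Feeding the first inequality of Lemma \ref{lem:lm:sm} into this and integrating from $0$ to $t$ yields
\begin{align*}
 \mathfrak{Q}(t)-\mathfrak{Q}(0) \leq \int_0^t \frac{2C}{(s+a)^{3/2}}\,\normto{\varepsilon(s)}^2\,\mathrm{d}s + C\int_0^t \frac{e^{-\theta_2(L+\theta_2 s)}}{(s+a)^{3/2}}\,\mathrm{d}s.
\end{align*}

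The next step is to evaluate the two time integrals. Using $\int_0^t (s+a)^{-3/2}\mathrm{d}s \leq 2/\sqrt{a}$ together with $\sqrt{a}=L/8$, the first term is bounded by
\begin{align*}
 \frac{4C}{\sqrt{a}}\,\sup_{0<s<t}\normto{\varepsilon(s)}^2 \;=\; \frac{32\,C}{L}\,\sup_{0<s<t}\normto{\varepsilon(s)}^2.
\end{align*}
For the second term, pulling out $e^{-\theta_2 L}$ and again using $\int_0^t (s+a)^{-3/2}\mathrm{d}s \leq 2/\sqrt{a}$ gives $(C'/L)e^{-\theta_2 L} \leq Ce^{-\theta_2 L}$ for $L\geq L_2$. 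Combining the two pieces yields exactly the claimed bound.

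There is no genuine obstacle here, since the two preceding lemmas already contain all the analytic content: the monotonicity identity for $\mathfrak{Q}$ with the sign of the quartic term exploited through $\sigma>0$, and the propagation estimate showing that the traveling waves $R_1,R_2$ contribute only exponentially small mass inside the moving window $|x-\overline{x}^0-\sigma t|<\sqrt{t+a}$. The only points that require a touch of care are (i) using the a priori $L^2$ smallness $\MMM(t)<1$ to dominate the cubic term by the linear one, and (ii) the bookkeeping $a=L^2/64 \Rightarrow 1/\sqrt{a}=8/L$ that produces the desired $1/L$ gain in front of $\sup_s\normto{\varepsilon(s)}^2$.
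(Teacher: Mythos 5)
Your proof is correct and follows exactly the route the paper intends: the paper states Proposition \ref{prop:monf} as an immediate consequence of Lemma \ref{lem:monf:dert} and Lemma \ref{lem:lm:sm}, and your write-up supplies precisely the missing details (using $\MMM(t)<1$ to absorb the cubic term, integrating $(s+a)^{-3/2}$ in time, and exploiting $a=L^{2}/64$ so that $\int_{0}^{\infty}(s+a)^{-3/2}\,\mathrm{d}s=2/\sqrt{a}=16/L$ produces the $C/L$ prefactor). No gaps.
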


\subsection{Monotone results for the lines $x=\overline{x}^{0}+\sigma_{0,\pm} t $ and $x=\overline{x}^{0}+\sigma_{\pm, 0} t $}
In order to get the refined estimates of $|\omega_k(t)-\omega_k(0)|$
and $|c_k(t)-c_k(0)|$ for $k=1,2$ in next section, we introduce the
following monotone functionals for the different lines and
characterize its property here. First we denote\footnote{It is easy
to verify that $c_{1}^{0}<\sigma_{\pm,0},\; \sigma_{0,\pm}
<c_{2}^{0}$ by assumption $(c)$ in Theorem \ref{thm:stab:tsln}.}
\begin{align}\label{tw:relspds}
&
    \sigma_{+,0}
    =   \sigma_{0,-}=
    \frac{\sigma+c_2^0}{2},
\quad
    \sigma_{-,0}=  \sigma_{0,+}
    =
    \frac{\sigma+\max\{c_1^0, 0\}}{2},
\end{align}
and define
\begin{align*}
    \mathfrak{h}_{\pm,0} \sts{t,x}
     =h\sts{ \frac{x-\overline{x}^{0}-\sigma_{\pm,0} t
    }{\sqrt{t+a}}},\quad
    \mathfrak{h}_{0,\pm} \sts{t,x}
    =h\sts{ \frac{x-\overline{x}^{0}-\sigma_{0,\pm} t }{\sqrt{t+a}}},
\end{align*}
and
\begin{align*}
  \frac{1}{c_2(0)-c_1(0)}  \mathfrak{Q}_{+,0}\sts{t}
=
&
    \frac{\sigma_{+,0}}{2}
        \int
            \frac12 \abs{u\sts{t,x}}^{2}\mathfrak{h}_{+,0}\sts{t,x}
        \;\dx
\\
&
    +
       \int \left(-   \frac{1}{2} \Im \left(
            \overline{u\sts{t,x}}\partial_{x}u\sts{t,x}\right) + \frac18 \abs{u\sts{t,x}}^{4} \right)\mathfrak{h}_{+,0}\sts{t,x}
        \;\dx,
\end{align*}
\begin{align*}
   \frac{1}{c_2(0)-c_1(0)}   \mathfrak{Q}_{-,0}\sts{t}
= &
    \frac{\sigma_{-,0}}{2}
        \int
            \frac12 \abs{u\sts{t,x}}^{2}\mathfrak{h}_{-,0}\sts{t,x}
        \;\dx
\\
&
    +
       \int \left(-   \frac{1}{2} \Im \left(
            \overline{u\sts{t,x}}\partial_{x}u\sts{t,x}\right) + \frac18 \abs{u\sts{t,x}}^{4} \right)\mathfrak{h}_{-,0}\sts{t,x}
        \;\dx,
\end{align*}
\begin{align*}
    \frac{1}{\omega_2(0)-\omega_1(0)}\mathfrak{Q}_{0,+}\sts{t}
= &
            \int
            \frac12 \abs{u\sts{t,x}}^{2}\mathfrak{h}_{0,+}\sts{t,x}
        \;\dx
\\
&
    + \frac{2}{\sigma_{0,+}}
       \int \left(-   \frac{1}{2} \Im \left(
            \overline{u\sts{t,x}}\partial_{x}u\sts{t,x}\right) + \frac18 \abs{u\sts{t,x}}^{4} \right)\mathfrak{h}_{0,+}\sts{t,x}
        \;\dx,
\end{align*}
and
\begin{align*}
    \frac{1}{\omega_2(0)-\omega_1(0)} \mathfrak{Q}_{0,-}\sts{t}
= &
            \int
            \frac12 \abs{u\sts{t,x}}^{2}\mathfrak{h}_{0,-}\sts{t,x}
        \;\dx
\\
&
    + \frac{2}{\sigma_{0,-}}
       \int \left(-   \frac{1}{2} \Im \left(
            \overline{u\sts{t,x}}\partial_{x}u\sts{t,x}\right) + \frac18 \abs{u\sts{t,x}}^{4} \right)\mathfrak{h}_{0,-}\sts{t,x}
        \;\dx.
\end{align*}

By the similar arguments as that in the proof of Proposition
\ref{prop:monf}, we have
\begin{corollary}\label{coro:monf} Let $u\sts{t}$ be a solution of \eqref{DNLS}
satisfying the assumption of Lemma \ref{lem:tsln:decomp} on $[0,
T_0]$. Then there exist $C$, $L_3$, $\alpha_3>0$   such that if
$L\geq L_3$ and $0<\alpha<\alpha_3$ we have for $t\in\left[0,
T_{0}\right],$
    \begin{align*}
        &
        \mathfrak{Q}_{\pm,0}\sts{t}-\mathfrak{Q}_{\pm,0}\sts{0}
        \leq
        \frac{C}{L}\sup_{0<s<t}\int\abs{\varepsilon\sts{s, x}}^{2}\dx + C~e^{-\theta_3 L},
        \\
        &
        \mathfrak{Q}_{0,\pm}\sts{t}-\mathfrak{Q}_{0,\pm}\sts{0}
        \leq
        \frac{C}{L}\sup_{0<s<t}\int\abs{\varepsilon\sts{s, x}}^{2}\dx + C~e^{-\theta_3
        L},
    \end{align*}
    where $$0<\theta_3=\frac{1}{16}\min\left(\sigma_{\pm, 0}-c^0_1,\;
c^0_2-\sigma_{\pm, 0},\; \frac{\sqrt{4\omega^0_1-(c^0_1)^2}}{4},\;
\frac{\sqrt{4\omega^0_2-(c^0_2)^2}}{4}\right)\leq \theta_2.$$
\end{corollary}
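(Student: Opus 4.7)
The plan is to repeat the three-step argument used to establish Proposition~\ref{prop:monf}, applied in parallel to each of the four alternative speeds $\sigma_{\pm,0}$ and $\sigma_{0,\pm}$ in place of $\sigma$. Concretely, I would first derive an analogue of Lemma~\ref{lem:monf:dert} for each of $\mathfrak{Q}_{\pm,0}$ and $\mathfrak{Q}_{0,\pm}$, then bound the local mass in the corresponding transported strip as in Lemma~\ref{lem:lm:sm}, and finally integrate in time.

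For the differential inequality, I would first observe that, up to a fixed multiplicative constant, each of $\mathfrak{Q}_{\pm,0}$ and $\mathfrak{Q}_{0,\pm}$ has exactly the same structural form as $\mathfrak{Q}$ in \eqref{acf:monf}, with the transport speed $\sigma$ replaced by the corresponding $\sigma_\ast\in\{\sigma_{\pm,0},\sigma_{0,\pm}\}$ and the cut-off $\mathfrak{h}(t,x)$ replaced by $h\!\left(\frac{x-\overline{x}^0-\sigma_\ast t}{\sqrt{t+a}}\right)$; indeed, for $\mathfrak{Q}_{0,\pm}$ one may factor out $\omega_2(0)-\omega_1(0)$ and rescale by $\sigma_{0,\pm}/2$ to match the normalization used for $\mathfrak{Q}_{\pm,0}$. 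Hence the computation leading to \eqref{est:monf} applies verbatim upon substituting $\sigma\mapsto\sigma_\ast$ and introducing the gauge $v(t,x)=e^{-i\sigma_\ast x/2}u(t,x)$. The decisive sign condition is the one exploited in \eqref{est:fth:a}: the leading quartic contribution is $-\tfrac{1}{8}\int |v|^4 h'(\cdot)\,\sigma_\ast/\sqrt{t+a}\,\dx$, which remains nonpositive because assumption~(c) of Theorem~\ref{thm:stab:tsln} forces $\sigma>\max(c_1^0,0)\ge0$, so that \eqref{tw:relspds} yields $\sigma_\ast>0$ in every case. The kinetic, quadratic, sixth-order, and residual quartic terms are controlled exactly as in Lemma~\ref{lem:monf:dert}, yielding
\[
\frac{d}{dt}\mathfrak{Q}_\ast(t)\le\frac{C}{(t+a)^{3/2}}\bigl(\mathcal{M}_\ast(t)+\mathcal{M}_\ast(t)^3\bigr),
\]
where $\mathcal{M}_\ast(t)=\int_{|x-\overline{x}^0-\sigma_\ast t|<\sqrt{t+a}}|u(t,x)|^2\,\dx$.

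For the local-mass bound, I would rerun the proof of Lemma~\ref{lem:lm:sm} with $\sigma_\ast$ replacing $\sigma$. The key transport-separation estimates
\[
\overline{x}^0+\sigma_\ast t - x_1(t)\ge \tfrac{L}{4}+\tfrac{\sigma_\ast-c_1^0}{2}t,\qquad x_2(t)-\overline{x}^0-\sigma_\ast t\ge \tfrac{L}{4}+\tfrac{c_2^0-\sigma_\ast}{2}t,
\]
follow from \eqref{tsln:para:dyn} together with the strict inclusions $c_1^0<\sigma_\ast<c_2^0$, which are immediate from the definitions \eqref{tw:relspds} combined with $c_1^0<\sigma<c_2^0$ (assumption~(c) and (b)). These bounds produce exponential smallness of each traveling wave on the strip $I_\ast(t)=\{|x-\overline{x}^0-\sigma_\ast t|<\sqrt{t+a}\}$ at the rate $\theta_3$ stated in the corollary, and consequently $\mathcal{M}_\ast(t)\le 2\|\varepsilon(t)\|_{L^2}^2 + Ce^{-\theta_3(L+\theta_3 t)}$.

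Integrating the differential inequality on $[0,t]$ and inserting the local-mass bound, the time weight obeys $\int_0^\infty(s+a)^{-3/2}\,ds = O(1/\sqrt{a}) = O(1/L)$, so the $\varepsilon$-contribution is absorbed into $\tfrac{C}{L}\sup_{0<s<t}\|\varepsilon(s)\|_{L^2}^2$ while the exponential tail contributes $Ce^{-\theta_3 L}$; this produces both stated estimates. I do not anticipate any new conceptual obstacle beyond Proposition~\ref{prop:monf}: the only genuinely delicate point is verifying that each $\sigma_\ast>0$, which is precisely the role of assumption~(c) together with the particular interpolations in \eqref{tw:relspds}. Finally $L_3\ge L_2$ and $\alpha_3\le\alpha_2$ are chosen large and small enough that all four separation estimates and the gauge bounds hold uniformly, with $\theta_3\le\theta_2$ as asserted.
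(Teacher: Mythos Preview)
Your proposal is correct and takes essentially the same approach as the paper, which simply states that the corollary follows ``by the similar arguments as that in the proof of Proposition~\ref{prop:monf}.'' You have accurately identified the structural reduction (each $\mathfrak{Q}_\ast$ has the same form as $\mathfrak{Q}$ after normalization), the crucial sign check $\sigma_\ast>0$ needed for \eqref{est:fth:a}, and the separation inequalities $c_1^0<\sigma_\ast<c_2^0$ needed for the local-mass estimate, all of which the paper leaves implicit.
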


\begin{remark}
   The introduction of the constant ``a'' aims to ensure that the variations of $\mathfrak{Q}\sts{t}$, $\mathfrak{Q}_{\pm,0}\sts{t}$ and $\mathfrak{Q}_{0,\pm}\sts{t}$ in Proposition \ref{prop:monf} and Corollary \ref{coro:monf} can be small by comparing with the $L^2$ estimate of the remainder term $\varepsilon\sts{t}$ in the interval $[0,T_0]$, up to small errors $e^{-\theta_2 L}$ and  $e^{-\theta_3 L}$.
\end{remark}

\section{Stability of the sum of two traveling waves}\label{sect:stab:tsln}
In this section, we show the result in Theorem \ref{thm:stab:tsln}.
Let $\omega^{0}_k$ and $c^{0}_k$ satisfy the assumptions in Theorem
\ref{thm:stab:tsln}. Fix
\begin{align*}
\theta_0 = \min(\theta_1, \theta_2, \theta_3).
\end{align*}
Let $\alpha_0$ be defined by Lemma \ref{lem:tslnst:decomp} and
$A_0>2$, $\delta_0=\delta_0(A_0), L_0=L_0(A_0)$ be chosen later.
Suppose that $u\sts{t}$ is the solution of \eqref{DNLS} with initial
data
$u_{0}\in\mathcal{U}\sts{\delta~,~L,~\omega^{0}_1~,~c^{0}_1,~\omega^{0}_2~,~c^{0}_2}.$
Then there exist $x^{0}_k\in\R$ and $\gamma^{0}_k\in\R$, $k=1,2$
such that
\begin{align*}
    \normhone{ u_{0}(\cdot) - \sum^2_{k=1}\varphi_{\omega^{0}_k,c^{0}_k}\sts{\cdot-x^{0}_k}e^{\i\gamma^{0}_k} }<\delta,
\end{align*}
where $\delta<\alpha_0$ and $x^0_2-x^0_1 > L$. Now we define
\begin{align*}
       T^*=\sup\left\{t\geq 0, \;
       \sup_{\tau\in[0,t]}\inf_{\substack{x_{2}^0-x_{1}^0 > \frac{L}{2} \\ \gamma^0_k\in\R,~ k=1,2}}
       \normhone{ u(\tau,\cdot)-\sum^{2}_{k=1}\varphi_{\omega^{0}_k,c^{0}_k}\sts{\cdot -x^0_k}e^{\i\gamma^0_k}}
    \leq A_0\left(\delta+e^{-\theta_0 \frac{L}{2}}\right)\right\}.
\end{align*}
By the continuity of $u(t)$ in $H^1$, we know that $T^*>0$. In order
to prove Theorem \ref{thm:stab:tsln}, it suffices to show
$T^*=+\infty$ for some $A_0>2$, $\delta_0>0$, and $L_0$.

We argue with contradiction. Suppose that $T^*<+\infty$, we know
that for any $t\in [0, T^*]$, there exist $(x^0_k(t),
\gamma^0_k(t))\in \R^2$, $k=1, 2$ such that $x^0_2(t)\geq
x^0_1(t)+\frac{L}{2}$ and
\begin{align*}
      \normhone{ u(t,\cdot)-\sum^2_{k=1}\varphi_{\omega^{0}_k,c^{0}_k}\sts{\cdot-x^{0}_k(t)}e^{\i\gamma^{0}_k(t)}}   \leq A_0\left(\delta +e^{-\theta_0
      \frac{L}{2}} \right).
\end{align*}

\noindent \textbf{Step 1: Decomposition of the solution $u(t)$
around the sum of two traveling waves.} Let $\widetilde{L}_{0}>0$ be
determined by Lemma \ref{lem:tslnst:decomp}, and $L_2, L_3$ be
determined by Proposition \ref{prop:monf} and Corollary
\ref{coro:monf}, and choose $\delta_0>0$ small enough and $L_{0}$
large enough, such that for $\delta<\delta_0$ and
$L>L_0(A_0)>\max\ltl{ 2\widetilde{L}_{0}, L_2, L_3}$,
\begin{align*}
    A_{0}~\sts{ \delta + e^{ - \theta_{0} \frac{L}{2} }
    }<\alpha_{0}.
\end{align*}
Now by Lemma \ref{lem:tsln:decomp}, there exists a unique
$\mathcal{C}^{1}$ functions $$
    \overrightarrow{\mathbf{q}}\sts{t}~
    =\,
    \sts{
        \omega_{1}\sts{t},c_{1}\sts{t},x_{1}\sts{t},\gamma_{1}\sts{t},
        \omega_{2}\sts{t},c_{2}\sts{t},x_{2}\sts{t},\gamma_{2}\sts{t}
    }
$$
on $[0~,~T^{*}]$ with
 $\left(c_{k}\sts{t}\right)^2 < 4\omega_{k}\sts{t}$, $k=1, 2$ such that
the remainder term
\begin{align}
\label{tsln:rem}
    \varepsilon
    \sts{t\,,\,x}
    = u\sts{t\,,\,x} - \sum^2_{k=1}R_{k}\sts{t\,,\,x}
\end{align}
 has the orthogonality conditions \eqref{tsl:orth:nondeg}-\eqref{tsl:orth:sym} on
$\left[0~,~T^{*}\right],$
for $k=1,~2$, where $ R_{k}\sts{t\,,\,x}
    \triangleq
    \varphi_{\omega_{k}\sts{t},c_{k}\sts{t}}\sts{x-x_{k}\sts{t}}e^{\i\gamma_{k}\sts{t}}$.
      Moreover, we have
\begin{align}
\label{tsln:para:rest}
      \normhone{\varepsilon\sts{t}}
      + &
      \sum_{k=1}^{2}\left(\abs{ \omega_k\sts{t}-\omega_{k}^{0} }
      +
      \abs{ c_k\sts{t}-c_{k}^{0} } \right)
      \leq
     C_{II}A_{0}\sts{ \delta + e^{ -  \theta_{0}  \frac{L}{2} } }, \\
          \label{tsln:cent:dist}
      & \left|x_2(t)-x_1(t)\right| >\;  \frac{L}{2} + \theta_0 t.
\end{align}
for any $t\in \left[0~,~T^{*}\right],$ and
\begin{align}
\label{tsln:para:iest}
      \normhone{\varepsilon\sts{0}}
      + & \;
      \sum_{k=1}^{2}\left(\abs{ \omega_k\sts{0}-\omega_{k}^{0} }
      +
    \abs{ c_k\sts{0}-c_{k}^{0} } \right)
      \leq
     C_{II}\delta. \end{align}
If necessary, we can take $\delta_0$ sufficiently small and $L_0$
sufficiently large to ensure that $C_{II}A_{0}\sts{ \delta + e^{ -
\theta_{0}  \frac{L}{2} } }<1$.

\noindent \textbf{Step 2: Action functional and refined estimate of
$\varepsilon(t)$ and $\left| \mathfrak{Q}\sts{t} -
\mathfrak{Q}\sts{0} \right|$.} For this purpose, we introduce the
following localized action functional,
\begin{align}
\label{acf:tsln}
    \mathfrak{E}\sts{u\sts{t}}
=
&
    E\sts{u\sts{t}}+\mathfrak{F}\sts{t},
\end{align}
where $\mathfrak{F}(t)$ is defined by \eqref{acf:monf}, which is not
conserved along the flow \eqref{DNLS}. Nevertheless from Proposition
\ref{prop:monf}, we know that it is almost conserved or
controllable. First, by the linearized argument and the orthogonal
structures \eqref{tsl:orth:nondeg} and \eqref{tsl:orth:sym}, we
have
\begin{lemma}
\label{lem:tsln:acfln}For $t\in [0, T^*]$, we have the following expansion formula.
\begin{align*}
  \mathfrak{E}\sts{u\sts{t}}
= &\; \sum^2_{k=1}
    \mathfrak{J}_{\omega_{k}\sts{0},c_{k}\sts{0}}\sts{R_{k}\sts{0}}
    +
    \mathfrak{H}\bilin{\varepsilon\sts{t}}{\varepsilon\sts{t}}
\\
&    +
    \bigo{ \sum^{2}_{k=1}\left(\abs{ \omega_{k}\sts{t} - \omega_{k}\sts{0}
    }^{2}+ \abs{ c_{k}\sts{t} - c_{k}\sts{0} }^{2}\right)
    }
\\
&
    +
    \normhone{\varepsilon\sts{t}}^{2}\beta\sts{ \normhone{\varepsilon\sts{t}} }
    +
    \bigo{ e^{ -8\theta_0 \left(\frac{L}{2}+ 8 \theta_0 t\right) }}
\end{align*}
where
\begin{align*}
   & \;  \mathfrak{H}\bilin{\varepsilon\sts{t}}{\varepsilon\sts{t}}
\\ = &\;   \frac{1}{2}\int
\abs{\partial_{x}\varepsilon\sts{t}}^{2}\;\dx
    -   \frac{3}{32} \sum^2_{k=1}
            \int \abs{ R_{k}\sts{t} }^{4} \abs{ \varepsilon\sts{t}
            }^{2}\; dx
           -\frac38 \sum^2_{k=1}
            \int \abs{ R_{k}\sts{t} }^{2}
                \left[ \Re{\sts{ \overline{R}_{k}\varepsilon}\sts{t}}
                \right]^{2}\; dx
\\
&
    +
    \frac{\omega_{1}\sts{t}}{2}
    \int
        \abs{
            \varepsilon\sts{t}
        }^{2}\mathfrak{g}\sts{t}
    \;\dx
    +
    \frac{\omega_{2}\sts{t}}{2}
    \int
        \abs{
            \varepsilon\sts{t}
        }^{2}\mathfrak{h}\sts{t}
    \;\dx
\\
&
    -
    \frac{c_{1}\sts{t}}{2}
    \Im\int
        \sts{\overline{\varepsilon}\partial_{x}\varepsilon}\sts{t}
        \mathfrak{g}\sts{t}\;\dx
    -
    \frac{c_{2}\sts{t}}{2}
    \Im\int
        \sts{ \overline{\varepsilon} \partial_{x}\varepsilon }\sts{t}
        \mathfrak{h}\sts{t}\;\dx
\\
&
    + \sum^2_{k=1}
    \frac{c_{k}\sts{t}}{4}
    \int \abs{R_{k}\sts{t}}^{2}\abs{\varepsilon\sts{t}}^{2} \;\dx
    + \sum^2_{k=1}
    \frac{c_{k}\sts{t}}{2}
    \int\left[\Re\sts{\overline{R}_{k}\varepsilon}\sts{t}\right]^{2}\;\dx.
\end{align*}
\end{lemma}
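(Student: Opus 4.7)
The plan is to mirror the single-traveling-wave computation of Lemma~\ref{lem:enlin:asln}, but accommodate (i) the two-center structure via the cut-off partition $\mathfrak{g}(t,x)+\mathfrak{h}(t,x)=1$, and (ii) the exponentially small interaction between $R_{1}(t)$ and $R_{2}(t)$ guaranteed by the separation $x_{2}(t)-x_{1}(t)\ge L/2+\theta_{0}t$ from \eqref{tsln:cent:dist}. First I would substitute the geometric decomposition $u(t)=R_{1}(t)+R_{2}(t)+\varepsilon(t)$ into each of the densities defining $E(u(t))$ and $\mathfrak{F}(t)$ (that is, $|\partial_{x}u|^{2}$, $|u|^{6}$, $|u|^{2}\mathfrak{g}$, $|u|^{2}\mathfrak{h}$, $\mathrm{Im}(\bar{u}\partial_{x}u)\mathfrak{g}+\tfrac14 |u|^{4}\mathfrak{g}$, and the analogue with $\mathfrak{h}$) and order the resulting expression by degree in $\varepsilon$. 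This produces a constant piece $I_{0}(t)$ in $R_{1},R_{2}$ alone, a linear piece $I_{1}(t)$, the quadratic piece $I_{2}(t)$ and a cubic-or-higher remainder bounded by $\normhone{\varepsilon(t)}^{2}\beta(\normhone{\varepsilon(t)})$.

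For $I_{1}(t)$, I would organise the integrand on the support of $\mathfrak{g}$ (resp.\ $\mathfrak{h}$) so that $\overline{\varepsilon(t)}$ is multiplied by $-\partial_{x}^{2}R_{1}-\tfrac{3}{16}|R_{1}|^{4}R_{1}+\omega_{1}(0)R_{1}-ic_{1}(0)\partial_{x}R_{1}+\tfrac12 c_{1}(0)|R_{1}|^{2}R_{1}$ (resp.\ the analogous expression with $R_{2}$ and $(\omega_{2}(0),c_{2}(0))$). Using the elliptic identity \eqref{eq:tw:tvar} at the current parameters $(\omega_{k}(t),c_{k}(t))$, each bracket collapses to $(\omega_{k}(0)-\omega_{k}(t))R_{k}+(c_{k}(0)-c_{k}(t))(-i\partial_{x}R_{k}+\tfrac12|R_{k}|^{2}R_{k})$, and the four orthogonality conditions \eqref{tsl:orth:nondeg} annihilate the pairing with $\bar\varepsilon$ exactly as in \eqref{acf:asln:1st}. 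The only leftovers are ``crossed'' interactions, namely terms containing $|R_{1}|^{a}|R_{2}|^{b}$ with $a,b\ge 1$ and the ``wrong cut-off'' terms in which $R_{k}$ is paired with the cut-off adapted to $R_{k'}$, $k'\ne k$; using the exponential decay of $\phi_{\omega_{k}^{0},c_{k}^{0}}$ together with \eqref{tsln:cent:dist} and the position of the transition at $\overline{x}^{0}+\sigma t$, all such integrals are bounded by $e^{-8\theta_{0}(L/2+8\theta_{0}t)}\|\varepsilon(t)\|_{L^{2}}$, absorbed into the stated exponential error via Young's inequality.

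For $I_{2}(t)$, I would check that the pure-$\varepsilon$ quadratic integrals coincide, up to interaction errors of the same form, with $\mathfrak{H}\bilin{\varepsilon(t)}{\varepsilon(t)}$: the coefficient $|R(t)|^{4}$ splits as $|R_{1}|^{4}+|R_{2}|^{4}$ modulo $|R_{1}|^{a}|R_{2}|^{b}$ (crossed) contributions, which are bounded by $Ce^{-8\theta_{0}(L/2+8\theta_{0}t)}\|\varepsilon(t)\|_{H^{1}}^{2}$; the same localisation places the $(\omega,c)$-dependent quadratic terms on the appropriate cut-off. Replacing $\omega_{k}(0),c_{k}(0)$ by $\omega_{k}(t),c_{k}(t)$ in the coefficients of $\mathfrak{H}$ costs $O((|\omega_{k}(t)-\omega_{k}(0)|+|c_{k}(t)-c_{k}(0)|)\|\varepsilon(t)\|_{H^{1}}^{2})$, which is absorbed by the claimed $O(\sum_{k}|\omega_{k}(t)-\omega_{k}(0)|^{2}+|c_{k}(t)-c_{k}(0)|^{2})+\normhone{\varepsilon(t)}^{2}\beta(\normhone{\varepsilon(t)})$. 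For $I_{0}(t)$, the same cut-off partition and the separation estimate give $I_{0}(t)=\sum_{k=1}^{2}\mathfrak{J}_{\omega_{k}(t),c_{k}(t)}(R_{k}(t))+O(e^{-8\theta_{0}(L/2+8\theta_{0}t)})$, and a second-order Taylor expansion at $(\omega_{k}(0),c_{k}(0))$, using $\partial_{\omega}\mathfrak{J}_{\omega,c}(\varphi_{\omega,c})=M(\varphi_{\omega,c})$, $\partial_{c}\mathfrak{J}_{\omega,c}(\varphi_{\omega,c})=P(\varphi_{\omega,c})$ exactly as in \eqref{est:main} (applied once to move from $(\omega_{k}(t),c_{k}(t))$ to $(\omega_{k}(0),c_{k}(0))$, and once to reconcile the auxiliary $(\omega_{1}(0),c_{1}(0))$-weighting implicit in $\mathfrak{F}$), reduces it to $\sum_{k}\mathfrak{J}_{\omega_{k}(0),c_{k}(0)}(R_{k}(0))$ up to the two acceptable error terms.

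The main obstacle is a careful and uniform bookkeeping of the \emph{cross-interaction} errors: each integrand that arises has to be split along $\mathfrak{g}+\mathfrak{h}=1$ so that $R_{k}$ is only ever paired with the cut-off adapted to it, and every mismatch of the form $|R_{k}|\cdot(\text{wrong cut-off})$ or $|R_{1}|^{\alpha}|R_{2}|^{\beta}$ must be shown, using the exponential tails of $\phi_{\omega_{k},c_{k}}$, the lower bound \eqref{tsln:cent:dist}, and the specific slope $\sigma$ placing the transition strictly between $x_{1}(t)$ and $x_{2}(t)$, to decay at least like $e^{-8\theta_{0}(L/2+8\theta_{0}t)}$ uniformly in $t\in[0,T^{*}]$. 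Once this localisation bookkeeping is in place, each half-line contribution reduces to the computation carried out in the proof of Lemma~\ref{lem:enlin:asln}, and summing over $k=1,2$ delivers the claimed identity.
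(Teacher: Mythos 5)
Your proposal is correct and follows essentially the same route as the paper's proof in Appendix \ref{app:tsln:acfln}: decompose $u=R_{1}+R_{2}+\varepsilon$, control the cross-interaction and wrong-cutoff terms via the exponential decay of the profiles together with the separation estimate \eqref{tsln:cent:dist} (the paper isolates this as Claim \ref{decay}), cancel the linear terms using the elliptic equation \eqref{eq:tw:tvar} and the orthogonality conditions \eqref{tsl:orth:nondeg}, and Taylor-expand the zeroth-order piece as in \eqref{est:main}. The only difference is organizational: you order the expansion by degree in $\varepsilon$, whereas the paper expands each term of the functional separately, which does not change the substance.
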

\begin{proof}
See the proof in Appendix \ref{app:tsln:acfln}.
\end{proof}

On one hand, by Lemma \ref{lem:tsln:acfln}, we have
\begin{align}
  \mathfrak{E}\sts{u\sts{0}}
= & \sum^2_{k=1}
    \mathfrak{J}_{\omega_{k}\sts{0},c_{k}\sts{0}}\sts{R_{k}\sts{0}}
        +
    \mathfrak{H}\bilin{\varepsilon\sts{0}}{\varepsilon\sts{0}}
    \notag
\\
&
    +
    \normhone{\varepsilon\sts{0}}^{2}\beta\sts{ \normhone{\varepsilon\sts{0}} }
    +
    \bigo{ e^{ - 8\theta_{0} \frac{L}{2} } }, \label{acf:tsln:id}
\end{align}
and
\begin{align}
  \mathfrak{E}\sts{u\sts{t}}
= &
\sum^2_{k=1}\mathfrak{J}_{\omega_{k}\sts{0},c_{k}\sts{0}}\sts{R_{k}\sts{0}}
    +
    \mathfrak{H}\bilin{\varepsilon\sts{t}}{\varepsilon\sts{t}}
    \notag
\\
&
    +
    \bigo{ \sum^2_{k=1}\left( \abs{ \omega_{k}\sts{t} - \omega_{k}\sts{0} }^{2}
     + \abs{ c_{k}\sts{t} - c_{k}\sts{0} }^{2}
    \right)} \notag
\\
&
    +
    \normhone{\varepsilon\sts{t}}^{2}\beta\sts{ \normhone{\varepsilon\sts{t}} }
    +
    \bigo{ e^{ -8 \theta_0\sts{ \frac{L}{2} +8\theta_0 t } } }.
    \label{acf:tsln}
\end{align}
On the other hand, by \eqref{acf:monf}, the conservation laws of
mass, momentum and energy, we have
\begin{align}\label{acf:tsln:alc}
  \mathfrak{E}\sts{u\sts{t}} -   \mathfrak{E}\sts{u\sts{0}}  = \mathfrak{Q}\sts{t} - \mathfrak{Q}\sts{0} .
\end{align}
Combining \eqref{acf:tsln:id}, \eqref{acf:tsln} and
\eqref{acf:tsln:alc}, we have
\begin{align}
    \mathfrak{H}\bilin{\varepsilon\sts{t}}{\varepsilon\sts{t}} + &
    \normhone{\varepsilon\sts{t}}^{2}\beta\sts{ \normhone{\varepsilon\sts{t}}
    } \notag
 \\
\leq &\; \mathfrak{Q}\sts{t} - \mathfrak{Q}\sts{0}
    +
    \mathfrak{H}\bilin{\varepsilon\sts{0}}{\varepsilon\sts{0}}
    +
    \normhone{\varepsilon\sts{0}}^{2}\beta\sts{ \normhone{\varepsilon\sts{0}}
    } \notag
\\
&
    +
    \bigo{ \sum^2_{k=1}\left(\abs{ \omega_{k}\sts{t} - \omega_{k}\sts{0} }^{2}+ \abs{ c_{k}\sts{t} - c_{k}\sts{0} }^{2}
    \right)}
    +
    \bigo{ e^{ - 4 \theta_0 L } } \notag
\\
\leq &\; \mathfrak{Q}\sts{t} - \mathfrak{Q}\sts{0}
        + C
    \normhone{\varepsilon\sts{0}}^{2}
     +
    \bigo{ e^{ -4 \theta_0 L } } \notag
\\
&
    +
    \bigo{ \sum^2_{k=1}\left(\abs{ \omega_{k}\sts{t} - \omega_{k}\sts{0} }^{2}+ \abs{ c_{k}\sts{t} - c_{k}\sts{0} }^{2}
    \right)},  \label{est:quad term}
\end{align}
where we use \eqref{tsln:para:iest} and the fact that $
\mathfrak{H}\bilin{\varepsilon\sts{0}}{\varepsilon\sts{0}}
    \leq C \normhone{ \varepsilon\sts{0} }^{2}
$ in the second inequality.

Now by the orthogonal structures \eqref{tsl:orth:nondeg},
\eqref{tsl:orth:sym} and the standard localized argument, we have
\begin{lemma}
\label{lem:tsln:coer}
  There exists $C_{1}>0$ such that
  \begin{align*}
    \mathfrak{H}\bilin{\varepsilon\sts{t}}{\varepsilon\sts{t}}
    \geq
   C_{1}\normhone{ \varepsilon\sts{t} }^{2}\,.
  \end{align*}
\end{lemma}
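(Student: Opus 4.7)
The plan is to reduce Lemma \ref{lem:tsln:coer} to two applications of the single-wave coercivity from Lemma \ref{lem:asln:quadcoer}, by exploiting the spatial separation $x_2(t)-x_1(t)\geq L/2 + \theta_0 t$ recorded in \eqref{tsln:cent:dist}. The cutoffs $\mathfrak{g}, \mathfrak{h}$ of \eqref{ctf} form a partition of unity whose transition zone $|x-\overline{x}^0-\sigma t|\lesssim \sqrt{t+a}$ lies at distance $\gtrsim L/4 + \theta_0 t$ from each soliton center $x_k(t)$. In particular, on this zone both $R_1(t)$ and $R_2(t)$ are pointwise $\bigo{e^{-\theta_0 L/4}}$, while $\mathfrak{h}$ is exponentially small where $R_1$ lives and $\mathfrak{g}$ is exponentially small where $R_2$ lives. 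I introduce the localized remainders $\varepsilon_1 := \varepsilon\sqrt{\mathfrak{g}}$, $\varepsilon_2 := \varepsilon\sqrt{\mathfrak{h}}$ (writing $\mathfrak{g}_1:=\mathfrak{g}$, $\mathfrak{g}_2:=\mathfrak{h}$), and aim to identify $\mathfrak{H}\bilin{\varepsilon}{\varepsilon}$ with $\sum_{k=1}^{2} \mathcal{H}_{\omega_k(t), c_k(t)}\bilin{\varepsilon_k}{\varepsilon_k}$ modulo small errors, where $\mathcal{H}_{\omega_k, c_k}$ denotes the single-wave quadratic form from Lemma \ref{lem:asln:quadcoer} with $R_k$ playing the role of $R$ in its definition.

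The algebraic identification proceeds termwise. Using $\mathfrak{g}+\mathfrak{h}\equiv 1$ together with the decay noted above, every potential piece splits as
\begin{align*}
   \int |R_k|^m |\varepsilon|^2\, dx = \int |R_k|^m |\varepsilon_k|^2\, dx + \bigo{e^{-\theta_0 L/4}\normto{\varepsilon}^2}, \quad m\in\{2,4\},
\end{align*}
and similarly for the $[\Re(\overline{R}_k\varepsilon)]^2$ contributions, because $|R_k|^m \mathfrak{g}_{3-k} = \bigo{e^{-\theta_0 L/4}}$ pointwise. The momentum pieces split \emph{exactly}: a direct calculation yields $\Im(\overline{\varepsilon}_k\partial_x\varepsilon_k) = \Im(\overline{\varepsilon}\partial_x\varepsilon)\,\mathfrak{g}_k$ pointwise, since the cross term $\tfrac12|\varepsilon|^2\partial_x\mathfrak{g}_k$ is real. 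For the kinetic piece, expanding $|\partial_x\varepsilon_k|^2 = |\partial_x\varepsilon|^2\mathfrak{g}_k + \tfrac12\partial_x(|\varepsilon|^2)\partial_x\mathfrak{g}_k + |\varepsilon|^2(\partial_x\sqrt{\mathfrak{g}_k})^2$ and summing in $k$, the cross term cancels through $\sum_k\partial_x\mathfrak{g}_k = 0$, while \eqref{est:ctff} and $(\partial_x\sqrt{\mathfrak{g}_k})^2\lesssim(t+a)^{-1}\leq L^{-2}$ give
\begin{align*}
   \int|\partial_x\varepsilon|^2\, dx = \sum_{k=1}^{2}\int|\partial_x\varepsilon_k|^2\, dx + \bigo{L^{-2}\normto{\varepsilon}^2}.
\end{align*}
Collecting all contributions,
\begin{align*}
   \mathfrak{H}\bilin{\varepsilon(t)}{\varepsilon(t)} = \sum_{k=1}^{2}\mathcal{H}_{\omega_k(t), c_k(t)}\bilin{\varepsilon_k(t)}{\varepsilon_k(t)} + \bigo{\bigl(L^{-2}+e^{-\theta_0 L/4}\bigr)\normhone{\varepsilon}^2}.
\end{align*}

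It remains to transfer the orthogonality conditions \eqref{tsl:orth:nondeg}--\eqref{tsl:orth:sym} from $\varepsilon$ to each $\varepsilon_k$ so that Lemma \ref{lem:asln:quadcoer} applies. Since $R_k$ and $\partial_x R_k$ are exponentially localized near $x_k(t)$ where $\sqrt{\mathfrak{g}_k}\equiv 1$ up to $\bigo{e^{-\theta_0 L/8}}$, each scalar product of $\varepsilon_k$ against one of $R_k$, $\i\partial_x R_k + \tfrac12|R_k|^2 R_k$, $\partial_x R_k$, $\i R_k$ differs from the corresponding vanishing scalar product of $\varepsilon$ by $\bigo{e^{-\theta_0 L/8}\normto{\varepsilon}}$. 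The Gram matrix of these four directions is uniformly non-degenerate in a small neighborhood of $(\omega_k^0, c_k^0)$ --- this is essentially $\det\Xi_{k,k}\neq 0$ from \eqref{eq:djacob} together with \eqref{nondeg} --- hence the $H^1$-projection $\widetilde{\varepsilon}_k$ of $\varepsilon_k$ onto the joint kernel of the four linear forms satisfies $\normhone{\varepsilon_k-\widetilde{\varepsilon}_k} = \bigo{e^{-\theta_0 L/8}\normhone{\varepsilon}}$. Lemma \ref{lem:asln:quadcoer} applied to $\widetilde{\varepsilon}_k$ then gives $\mathcal{H}_{\omega_k(t), c_k(t)}\bilin{\widetilde{\varepsilon}_k}{\widetilde{\varepsilon}_k}\geq C_0\normhone{\widetilde{\varepsilon}_k}^2$, and combining with $\normhone{\varepsilon_1}^2+\normhone{\varepsilon_2}^2\geq \normhone{\varepsilon}^2 - \bigo{L^{-2}\normto{\varepsilon}^2}$ closes the argument for $L$ sufficiently large. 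The main obstacle I expect is precisely this orthogonality transfer: one must verify the uniform Gram non-degeneracy that governs $\varepsilon_k\mapsto\widetilde{\varepsilon}_k$ and then absorb the resulting $e^{-\theta_0 L/8}$ and $L^{-2}$ errors into $C_0\normhone{\varepsilon}^2$ by choosing $L_0$ large enough, compatibly with the choices already fixed in Sections \ref{sect:decomp:tsln} and \ref{sect:mono forms}.
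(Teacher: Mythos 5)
Your argument is correct, and it reaches the same destination as the paper's Appendix \ref{app:tsln:coer} by a recognizably different localization scheme. The paper does not use a partition of unity: it introduces two exponential weights $\Phi_{B,x_1}$, $\Phi_{B,x_2}$, each identically $1$ near one center and decaying like $e^{-\abs{x-x_k}/B}$, proves a weighted single-wave coercivity (Lemma \ref{claim:app:loc:coercive}) by substituting $\zeta=\sqrt{\Phi_{B,x_k}}\,\varepsilon$ --- absorbing the commutator errors through $\abs{\Phi'}\leq CB^{-1}\Phi$ and the orthogonality defects of $\zeta$ through a quantitative version of Proposition \ref{prop:coer:asln} in which the defects enter as additive penalties --- and then treats the region between the two weights separately: there the form reduces to $\abs{\varepsilon_x}^2+\omega_k\abs{\varepsilon}^2-c_k\Im\sts{\overline{\varepsilon}\varepsilon_x}$ multiplied by $g-\Phi_{B,x_k}\geq -e^{-L/(8B)}$, which is pointwise coercive precisely because $c_k^2<4\omega_k$. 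Your scheme instead distributes everything to the two pieces via $\varepsilon_k=\varepsilon\sqrt{\mathfrak{g}_k}$, which makes the mass and momentum terms of $\mathfrak{H}$ match the single-wave forms \emph{exactly} (a nice feature, since $\mathfrak{H}$ is built from the same cutoffs $\mathfrak{g},\mathfrak{h}$), replaces the paper's middle-region step by the $\bigo{L^{-2}}$ commutator error together with the full-$H^1$ coercivity of Lemma \ref{lem:asln:quadcoer} applied to each $\varepsilon_k$, and restores orthogonality by projection rather than by a defect inequality; these two devices are equivalent, and the non-degeneracy governing your projection is exactly the invertibility of $\Xi_{k,k}$ in \eqref{eq:djacob}. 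One point you should make explicit: the bound $\sts{\partial_x\sqrt{\mathfrak{g}}}^2\lesssim (t+a)^{-1}$ needs $\sts{h'}^2\lesssim 1-h$, which is not among the properties listed in \eqref{est:ctff} (only $\sts{h'}^2\leq Ch$ is); it does hold for free, since $1-h$ is a nonnegative $C^2$ function with bounded second derivative and hence satisfies $\sts{(1-h)'}^2\leq 2\|h''\|_{L^\infty}(1-h)$. With $L_0$ enlarged so that the combined $C\sts{L^{-2}+e^{-\theta_0 L/8}}$ error is at most half the single-wave coercivity constant, your argument closes.
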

\begin{proof} See the proof in Appendix \ref{app:tsln:coer}.
\end{proof}

Now by \eqref{tsln:para:rest}, \eqref{est:quad term} and the above
lemma, we have
\begin{align}
    \frac{ C_{1} }{ 2 }\normhone{ \varepsilon\sts{t} }^{2}
\leq
&\;
    \mathfrak{Q}\sts{t} - \mathfrak{Q}\sts{0}
    +
    C\; \normhone{ \varepsilon\sts{0} }^{2}
    +
    C\; e^{ - 4\theta_0 L } \notag
\\
 &  +
    \bigo{ \sum^2_{k=1}\left(\abs{ \omega_{k}\sts{t} - \omega_{k}\sts{0} }^{2}+ \abs{ c_{k}\sts{t} - c_{k}\sts{0} }^{2}
    \right)}. \label{tsln:rem_mon}
\end{align}
By Proposition \ref{prop:monf}, we have
\begin{align}
    \frac{ C_{1} }{ 2 }\normhone{ \varepsilon\sts{t} }^{2}
\leq &\;
\frac{C}{L}\sup_{0<s<t}\int\abs{\varepsilon\sts{s}}^{2}
    +
    C\; \normhone{ \varepsilon\sts{0} }^{2}
    +
    C\; e^{ - \theta_0 L } \notag
\\
 &  +
    \bigo{ \sum^2_{k=1}\left(\abs{ \omega_{k}\sts{t} - \omega_{k}\sts{0} }^{2}+ \abs{ c_{k}\sts{t} - c_{k}\sts{0} }^{2}
    \right)}, \label{tsln:rem:ref}
\end{align}
and
\begin{align}
    \left| \mathfrak{Q}\sts{t} - \mathfrak{Q}\sts{0} \right|
\leq &\;
\frac{C}{L}\sup_{0<s<t}\int\abs{\varepsilon\sts{s}}^{2}
    +
    C\; \normhone{ \varepsilon\sts{0} }^{2}
    +
    C\; e^{ - \theta_0 L } \notag
\\
 &  +
    \bigo{ \sum^2_{k=1}\left(\abs{ \omega_{k}\sts{t} - \omega_{k}\sts{0} }^{2}+ \abs{ c_{k}\sts{t} - c_{k}\sts{0} }^{2}
    \right)}. \label{tsln:mon:ref}
\end{align}

\noindent \textbf{Step 3: Refined estimates of $\abs{ c_{k}\sts{t} -
c_{k}\sts{0} }$ and $\abs{ \omega_{k}\sts{t} - \omega_{k}\sts{0}
}$.}

By the definitions of $\mathfrak{Q}_{+,0}\sts{t}$ and
$\mathfrak{Q}\sts{t}$, we have
\begin{align}
   \frac{ \mathfrak{Q}_{+,0}\sts{t} - \mathfrak{Q}\sts{t}  }{c_2(0)-c_1(0)}
= &\;
    \frac{\sigma_{+,0}-\sigma}{2}
        \int
           \frac{1}{2} \abs{u\sts{t,x}}^{2}\mathfrak{h}_{+,0}\sts{t,x}
        \;\dx \notag
\\
\notag
&
    +
   \frac{\sigma}{2}
        \int
           \frac{1}{2} \abs{u\sts{t,x}}^{2}
            \left[ \mathfrak{h}_{+,0}\sts{t,x} - \mathfrak{h}\sts{t,x} \right]
        \;\dx
\\
&
   +
        \int \sts{
            -\frac{1}{2}
            \Im \left(\overline{u}\partial_{x}u \right)+
            \frac18 \abs{u}^{4}
            }\sts{t,x}
            \big[ \mathfrak{h}_{+,0}\sts{t,x} - \mathfrak{h}\sts{t,x} \big]
        \;\dx \label{est:k_mass:form}
\end{align}
On one hand, by \eqref{efreg:h}, we know that $ \mathfrak{h}_{+,0} -
\mathfrak{h} \equiv 0 $ for $x<x_{1}\sts{t} +   L/16$ or
$x>x_{2}\sts{t} -  L/16,$ which implies that
\begin{align*}
    \int
        \sts{\; \abs{ R\sts{t,x} } + \abs{ \partial_{x}R\sts{t,x} } \;}
        \abs{ \mathfrak{h}_{+,0}\sts{t,x} - \mathfrak{h}\sts{t,x} }
    \;\dx
    \leq C e^{-\theta_0 L }.
\end{align*}
This together with the Cauchy-Schwarz inequality yields
\begin{align}
\label{est:k_mass:sm}
    \abs{
        \int
            \left(\abs{u\sts{t}}^{2}+ \abs{ \Im \left(
            \overline{u\sts{t}}\partial_{x}u\sts{t}\right)}+  \abs{u\sts{t}}^{4} \right)
            \left[ \mathfrak{h}_{+,0}\sts{t} - \mathfrak{h}\sts{t} \right]
        \;\dx
    }
\leq C \normhone{\varepsilon\sts{t} }^{2} + Ce^{ - \theta_0 L}.
\end{align}
On the other hand, it follows from \eqref{tsl:orth:nondeg} that
\begin{align}
\label{est:k_mass:main} & \abs{
    \int
        \abs{u\sts{t,x}}^{2}
        \mathfrak{h}_{+,0}\sts{t,x}
    \;\dx
    -
    \int |R_{2}\sts{t,x}|^2\;\dx
} \leq C \normhone{\varepsilon\sts{t}}^{2} + C~e^{ - \theta_0
L}.
\end{align}
Thus, Inserting \eqref{est:k_mass:sm} and \eqref{est:k_mass:main}
into \eqref{est:k_mass:form}, we have
\begin{align*}
    \abs{
        \frac{ \mathfrak{Q}_{+,0}\sts{t} - \mathfrak{Q}\sts{t}  }{c_2(0)-c_1(0)}
-
    \frac{\sigma_{+,0}-\sigma}{2} M\sts{ R_{2}\sts{t}}
    }
\leq C \normhone{\varepsilon\sts{t}}^{2} + C~e^{ - \theta_0 L}.
\end{align*}
Particularly, we have for $t>0$ and $t=0$ that
\begin{align*}
      -\left(   \frac{ \mathfrak{Q}_{+,0}\sts{t} - \mathfrak{Q}\sts{t}  }{c_2(0)-c_1(0)}
-
    \frac{\sigma_{+,0}-\sigma}{2} M\sts{ R_{2}\sts{t}} \right)
\leq C \normhone{\varepsilon\sts{t}}^{2} + C~e^{ - \theta_0 L},
\end{align*}
and
\begin{align*}
         \frac{ \mathfrak{Q}_{+,0}\sts{0} - \mathfrak{Q}\sts{0}  }{c_2(0)-c_1(0)}
-
    \frac{\sigma_{+,0}-\sigma}{2} M\sts{ R_{2}\sts{0}}
\leq C \normhone{\varepsilon\sts{0}}^{2} + C~e^{ - \theta_0 L}.
\end{align*}
Summing up the above two inequalities, we obtain from
\eqref{tsln:mon:ref} and Corollary \ref{coro:monf} that
\begin{align}
       \frac{\sigma_{+,0}-\sigma}{2}
               & \; \sts{ M\sts{R_{2}(t)} - M \sts{R_2(0)} } \notag
\\
\leq & \;\frac{ \mathfrak{Q}_{+,0}\sts{t} -  \mathfrak{Q}_{+,0}\sts{0}}{c_2(0)-c_1(0)} -
  \frac{ \mathfrak{Q}\sts{t}  - \mathfrak{Q}\sts{0}  }{c_2(0)-c_1(0)}
   +
    C\sup_{s\in\left[0\;,\;t\right]}\normhone{\varepsilon\sts{t}}^{2}
    +
    Ce^{ - \theta_0 L} \notag
\\
    \leq & \;
  \frac{ \mathfrak{Q}_{+,0}\sts{t} -  \mathfrak{Q}_{+,0}\sts{0}}{c_2(0)-c_1(0)} +\left|
  \frac{ \mathfrak{Q}\sts{t}  - \mathfrak{Q}\sts{0}  }{c_2(0)-c_1(0)} \right|
   +
    C\sup_{s\in\left[0\;,\;t\right]}\normhone{\varepsilon\sts{t}}^{2}
    +
    Ce^{ - \theta_0 L} \notag
\\
\leq &\;
    C\sup_{s\in\left[0\;,\;t\right]}\normhone{\varepsilon\sts{s}}^{2}
    +
    Ce^{ - \theta_0 L}. \label{est:k_mass:up}
\end{align}

By the similar arguments as above, we can show that
\begin{align*}
    \abs{
         \frac{ \mathfrak{Q}\sts{t} - \mathfrak{Q}_{-,0}\sts{t}  }{c_2(0)-c_1(0)}
-
    \frac{\sigma -\sigma_{-,0}}{2} M\sts{ R_{2}\sts{t}}
    }
\leq C \normhone{\varepsilon\sts{t}}^{2} + C~e^{ - \theta_0 L},
\end{align*}
and
\begin{align}
\label{est:k_mass:low}
       - \frac{\sigma -\sigma_{-,0}}{2}
        \sts{
             M\sts{ R_{2}(t)}
            -
            M\sts{R_{2}(0)}
        }
\leq\;
    C\sup_{s\in\left[0\;,\;t\right]}\normhone{\varepsilon\sts{s}}^{2}
    +
    Ce^{ - \theta_0 L}.
\end{align}
Combining \eqref{est:k_mass:up} with \eqref{est:k_mass:low}, we
obtain
\begin{align}
\label{est:k_mass:bdd}
    \abs{
            M\sts{ R_{2}(t)}
            -
            M\sts{ R_{2}(0)}
        }
    \leq \;
    C\sup_{s\in\left[0\;,\;t\right]}\normhone{\varepsilon\sts{s}}^{2}
    +
    Ce^{ - \theta_0 L}.
\end{align}
By the similar arguments as above and the definitions of
$\mathfrak{Q}_{0, \pm}\sts{t}$ and $\mathfrak{Q}\sts{t}$, we can
show that
\begin{align}
\label{est:k_mom:bdd}
    \abs{
            \momentum{R_{2}\sts{t}}
            -
            \momentum{R_{2}\sts{0}}
        }
    \leq
    C\sup_{s\in\left[0\;,\;t\right]}\normhone{\varepsilon\sts{t}}^{2}
    +
    Ce^{ - \theta_0 L}.
\end{align}

Now by the mass conservation and the orthogonality condition
\eqref{tsl:orth:nondeg}, we have
\begin{align}
\label{est:tsln:massdyn}
    \abs{ \sum_{k=1}^{2}\big( M\sts{ R_{k}(t)}
            -
            M\sts{ R_{k}(0)} \big)
    }
    \leq\;
    C\sup_{s\in\left[0\;,\;t\right]}\normhone{\varepsilon\sts{t}}^{2}
    +
    Ce^{ - \theta_0 L},
\end{align}
where we used the fact that $|x_2(t)-x_1(t)|> \frac{L}{2}+ \theta_0
t$. Thus, by \eqref{est:k_mass:bdd} and \eqref{est:tsln:massdyn}, we
obtain
\begin{align}
\label{est:f_mass:bdd}
    \abs{
           M( R_{1}\sts{t} )
            -
            M( R_{1}\sts{0})
        }
    \leq\;
    C\sup_{s\in\left[0\;,\;t\right]}\normhone{\varepsilon\sts{t}}^{2}
    +
    Ce^{ - \theta_0 L}.
\end{align}
In a similar way, we have
\begin{align}
\label{est:f_mom:bdd}
    \abs{
            \momentum{R_{1}\sts{t}}
            -
            \momentum{R_{1}\sts{0}}
        }
    \leq
    C\sup_{s\in\left[0\;,\;t\right]}\normhone{\varepsilon\sts{t}}^{2}
    +
    Ce^{ - \theta_0 L}.
\end{align}

By \eqref{tsln:sl:rough} and the non-degenerate condition
$$\det  d''\sts{~\omega^0_k,~c^0_k~}  <0$$ for $k=1, 2$. We can now
refine the estimates of $\abs{ \omega_k\sts{t}-\omega_k\sts{0}
}+\abs{ c_k\sts{t}-c_k\sts{0} }.$
\begin{lemma}
\label{lem:k_para:ref}
  For any $t\in\left[0~,~T^*\right],$ we have for $k=1, 2$
\begin{align*}
  \abs{\omega_{k}\sts{t}- \omega_{k}\sts{0}} + \abs{c_{k}\sts{t}- c_{k}\sts{0}}
    \leq \;
    C\sup_{ s\in\left[0\;,\;t\right] }\normhone{\varepsilon\sts{s}}^{2}
    +
    Ce^{ - \theta_0 L}.
\end{align*}
\end{lemma}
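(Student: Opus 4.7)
The plan is to repeat the argument from Step 3 of Section \ref{sect:stab:asln} (the single traveling wave case) applied separately to each traveling wave, using the four bounds \eqref{est:k_mass:bdd}, \eqref{est:k_mom:bdd}, \eqref{est:f_mass:bdd} and \eqref{est:f_mom:bdd} in place of the mass and momentum conservation laws. Since $R_k(t,x) = \varphi_{\omega_k(t),c_k(t)}(x - x_k(t)) e^{\i \gamma_k(t)}$, we have the identities $\mass{R_k(t)} = \mass{\varphi_{\omega_k(t),c_k(t)}}$ and $\momentum{R_k(t)} = \momentum{\varphi_{\omega_k(t),c_k(t)}}$, together with the Pohozaev-type relations $\partial_\omega \mass{\varphi_{\omega,c}} = \partial_\omega^2 d(\omega,c)$ etc., so that the Jacobian of the map $(\omega,c) \mapsto (\mass{\varphi_{\omega,c}}, \momentum{\varphi_{\omega,c}})$ at $(\omega_k(0), c_k(0))$ is exactly $d''(\omega_k(0), c_k(0))$.

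For $k=1,2$, I would perform a first-order Taylor expansion
\begin{align*}
\begin{pmatrix} \mass{R_k(t)} \\ \momentum{R_k(t)} \end{pmatrix}
&= \begin{pmatrix} \mass{R_k(0)} \\ \momentum{R_k(0)} \end{pmatrix}
+ d''(\omega_k(0), c_k(0)) \begin{pmatrix} \omega_k(t) - \omega_k(0) \\ c_k(t) - c_k(0) \end{pmatrix} \\
&\quad + \sts{\abs{\omega_k(t)-\omega_k(0)} + \abs{c_k(t)-c_k(0)}}\, \beta\sts{\abs{\omega_k(t)-\omega_k(0)} + \abs{c_k(t)-c_k(0)}},
\end{align*}
where $\beta(r) \to 0$ as $r \to 0$, using the $\mathcal{C}^2$ regularity of $d$ and the smallness estimate \eqref{tsln:para:rest}. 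Since the non-degeneracy condition $\det d''(\omega_k^0, c_k^0) < 0$ holds at the reference point and is stable under small perturbations (again by \eqref{tsln:para:rest}, after possibly shrinking $\delta_0$ and enlarging $L_0$), the matrix $d''(\omega_k(0), c_k(0))$ is invertible with a uniformly bounded inverse.

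Inverting this relation and plugging in the four estimates \eqref{est:k_mass:bdd}--\eqref{est:f_mom:bdd} on the right-hand side yields, for $k=1,2$,
\begin{align*}
\abs{\omega_k(t) - \omega_k(0)} + \abs{c_k(t) - c_k(0)}
\leq C \sup_{s \in [0,t]} \normhone{\varepsilon(s)}^2 + C e^{-\theta_0 L}
+ \tfrac{1}{2}\sts{\abs{\omega_k(t)-\omega_k(0)} + \abs{c_k(t)-c_k(0)}},
\end{align*}
where the last term comes from absorbing the $o(1) \cdot (\abs{\Delta\omega_k} + \abs{\Delta c_k})$ remainder, which is legitimate because \eqref{tsln:para:rest} makes $\beta(\cdot)$ arbitrarily small for $\delta_0, 1/L_0$ small. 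Moving this term to the left yields the claim. The main conceptual obstacle is mild: one must verify that the bounds \eqref{est:k_mass:bdd}--\eqref{est:f_mom:bdd} obtained from the monotonicity formulas really control $\mass{R_k(t)} - \mass{R_k(0)}$ and $\momentum{R_k(t)} - \momentum{R_k(0)}$ rather than just their sum, which is exactly what those estimates provide by separating the contributions of the left and right traveling waves via the cutoffs $\mathfrak{h}$, $\mathfrak{h}_{\pm,0}$, $\mathfrak{h}_{0,\pm}$ centered on well-separated trajectories.
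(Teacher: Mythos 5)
Your proposal is correct and follows essentially the same route as the paper: a first-order Taylor expansion of $\bigl(\mass{R_k(t)},\momentum{R_k(t)}\bigr)$ around $\bigl(\omega_k(0),c_k(0)\bigr)$ with Jacobian $d''\bigl(\omega_k(0),c_k(0)\bigr)$, inversion using the stability of the non-degeneracy condition $\det d''(\omega_k^0,c_k^0)<0$ under the smallness estimate \eqref{tsln:para:rest}, and substitution of the four bounds \eqref{est:k_mass:bdd}, \eqref{est:k_mom:bdd}, \eqref{est:f_mass:bdd}, \eqref{est:f_mom:bdd}. The only cosmetic difference is that you write the Taylor remainder as $o(1)\cdot(\abs{\Delta\omega_k}+\abs{\Delta c_k})$ and absorb it, while the paper writes it as $\bigo{\abs{\Delta\omega_k}^2+\abs{\Delta c_k}^2}$; both are handled identically using \eqref{tsln:para:rest}.
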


\begin{proof}
On one hand, by \eqref{tsln:para:rest} and
\begin{align*}
    \det d''\sts{\omega_{k}^{0}~,~c_{k}^{0}}<0,
\end{align*}
we have for sufficiently small $\alpha_0$ that
\begin{align}
    2 \det d''\sts{\omega_{k}^{0},~c_{k}^{0}}
    <
    \det d''\sts{\omega_{k}\sts{t},~c_{k}\sts{t}}
    <
    \frac{1}{2}\det d''\sts{\omega_{k}^{0},~c_{k}^{0}}<0.
    \label{est:k_para:jac}
\end{align}
On the other hand, by the Taylor formula, we have
\begin{align}
 & \begin{pmatrix}
        \mass{R_{k}\sts{t}}-\mass{R_{k}\sts{0}}\\
        \momentum{R_{k}\sts{t}}-\momentum{R_{k}\sts{0}}\\
    \end{pmatrix}
    =   \begin{pmatrix}
        \mass{\varphi_{\omega_k(t), c_k(t)}}-\mass{\varphi_{\omega_k(0), c_k(0)}}\\
        \momentum{\varphi_{\omega_k(t), c_k(t)}}-\momentum{\varphi_{\omega_k(0), c_k(0)}}\\
    \end{pmatrix} \notag
 \\
    = &\;   d''\sts{\omega_{k}\sts{0}~,~c_{k}\sts{0}}
    \begin{pmatrix}
        \omega_{k}\sts{t}-\omega_{k}\sts{0} \\
        c_{k}\sts{t}-c_{k}\sts{0} \\
    \end{pmatrix}
  +
    \bigo{
        \abs{ \omega_{k}\sts{t}-\omega_{k}\sts{0} }^{2}
        +
        \abs{ c_{k}\sts{t}-c_{k}\sts{0} }^{2}
    }.
    \label{est:k_para:taylor}
\end{align}
By \eqref{est:k_mass:bdd}, \eqref{est:k_mom:bdd},
\eqref{est:f_mass:bdd}, \eqref{est:f_mom:bdd},
\eqref{est:k_para:jac} and \eqref{est:k_para:taylor}, we can obtain
the result.
\end{proof}

\noindent \textbf{Step 4: Conclusion.} Combining
\eqref{tsln:rem:ref} and Lemma \ref{lem:k_para:ref}, we have for any
$t\in [0, T^*]$
\begin{align*}
    \frac{ C_{1} }{ 2 }\normhone{ \varepsilon\sts{t} }^{2}
\leq &\; \frac{C}{L}\sup_{s\in [0, t] }\normhone{
\varepsilon\sts{s} }^{2}
   + C \sup_{s\in [0,t]} \left( \beta \sts{\normhone{ \varepsilon\sts{t} }}  \normhone{ \varepsilon\sts{t} }^{2}\right)
\\  & +
    C\; \normhone{ \varepsilon\sts{0} }^{2}
    +
    C\; e^{ - \theta_0 L }.
     \end{align*}
By \eqref{tsln:sl:rough}, and taking $\alpha_0$ sufficiently small
and $L_0$ sufficiently large, we have for any $t\in [0, T^*]$
\begin{align*}
\normhone{ \varepsilon\sts{t} }^{2} \leq  C\; \normhone{
\varepsilon\sts{0} }^{2}
    +
    C e^{ - \theta_0 L }.
   \end{align*}
This together Lemma \ref{lem:k_para:ref} implies that
\begin{align*}
\normhone{ \varepsilon\sts{t} }^{2} + \sum^{2}_{k=1}\big(
\abs{\omega_{k}\sts{t}- \omega_{k}\sts{0}} + \abs{c_{k}\sts{t}-
c_{k}\sts{0}} \big)\leq C  \normhone{ \varepsilon\sts{0} }^{2}
    +
    C e^{ - \theta_0 L }.
   \end{align*}
Last, we have
\begin{align*}
 & \inf_{x_2^0-x_1^0>\frac{L}{2}, \gamma_k^0\in\R} \normhone{ u\sts{t,\cdot}-\sum^2_{k=1}\varphi_{\omega_k^{0},c_k^{0}}\sts{ \cdot-x_k^0}e^{\i\gamma_k^0} }
  \\
  \leq &
  \normhone{ u\sts{t,\cdot}-\sum^2_{k=1}\varphi_{\omega_k^{0},c_k^{0}}\sts{ \cdot-x_k\sts{t} }e^{\i\gamma_k\sts{t}} }
  \\
  \leq
  &
    \normhone{ u\sts{t, \cdot}-\sum^2_{k=1}\varphi_{\omega_k\sts{t},c_k\sts{t}}\sts{ \cdot-x_k\sts{t} }e^{\i\gamma_k\sts{t}} }
    +
    C \sts{\sum^2_{k=1}\left(\abs{\omega_k(t)-\omega_k^0}+\abs{c_k(t)-c_k^0}\right)}
  \\
  \leq
  &
    \normhone{\varepsilon\sts{t}}
    +
    C\sts{\sum^2_{k=1}\left(\abs{\omega_k\sts{t}-\omega_k\sts{0}} + \abs{c_k\sts{t}-c_k\sts{0}}+\abs{\omega_k\sts{0}-\omega_k^{0}}+ \abs{c_k\sts{0}-c_k^{0}}\right)}
    \\
 \leq   &\;
  C\sts{\normhone{\varepsilon\sts{0}}
    +\sum^2_{k=1}\left(\abs{\omega_k\sts{0}-\omega_k^{0}}+ \abs{c_k\sts{0}-c_k^{0}}\right)} + C e^{-\theta_0 \frac{L}{2}}
   \\
 \leq   & \;   C C_{II}\left(\delta + e^{-\theta_0 \frac{L}{2}}\right).
\end{align*}
If choosing $A_0 \geq 2CC_{II}$, then for any $t\in [0, T^*]$, we
have
\begin{align*}
 \inf_{x_2^0-x_1^0>\frac{L}{2}, \gamma_k^0\in\R} \normhone{ u\sts{t,\cdot}-\sum^2_{k=1}\varphi_{\omega_k^{0},c_k^{0}}\sts{ \cdot-x_k^0}e^{\i\gamma_k^0} }
   \leq
  \frac12 A_0 \delta,
  \end{align*}
which contradicts with the assumption $T^*<+\infty$ by the
continuity of $u(t)$ in $H^1(\R)$. This implies $T^*=+\infty$ and
completes the proof of Theorem \ref{thm:stab:tsln}.


\begin{appendices}
\section{The coercivity of the quadratic term}\label{app:coer:arf}
In this appendix, we prove Proposition \ref{prop:coer:arf}. The
proof of Part (1) is the same as that in Proposition 2.8 (a)
\cite{Wein:stab:SJMA}. As for Part (2), the proof is divided into
several steps.

{\bf Step 1: Spectral decomposition.} First of all, it follows from
the exponential decay of $\phi_{\omega,c}$ that $\mathcal{L}_{+}$ is
a relatively compact perturbation of the operator $-\frac12
\partial_{x}^{2} + \frac12\left(\omega-\frac{c^{2}}{4}\right).$ By Weyl's theorem in
\cite{ReedSimon:book:IV}, we obtain that the essential spectrum of
$\mathcal{L}_{+}$ on $L^{2}\sts{\R}$ is
\begin{align*}
  \sigma_{\ess}\sts{ \mathcal{L}_{+} } &=\sigma_{\ess}\sts{ -\frac12\partial_{x}^{2} + \frac12\left(\omega-\frac{c^{2}}{4}\right) } = \left[ \frac12\left(\omega-\frac{c^{2}}{4}\right)~,~+\infty \right).
\end{align*}
Moreover, all spectrum below the lower bound of the essential
spectrum are either an isolated point of $\sigma\sts{
\mathcal{L}_{+} }$ or an eigenvalue of finite multiplicity of
$\mathcal{L}_{+}.$

Next, since $\phi_{\omega,c}$ satisfies
\begin{align}\label{eq:gr}
        \left(\omega-\frac{c^2}{4}\right)\phi_{\omega,c}
    -   \partial^2_x \phi_{\omega,c}-\frac{3}{16}\phi_{\omega,c}^{5}
    =
    -   \frac{c}{2}\phi_{\omega,c}^{3}\;,
\end{align}
then by differentiating equation \eqref{eq:gr} with respect to $x,$
we obtain
\begin{align}\label{eq:zeroeign}
    \mathcal{L}_{+}\partial_x \phi_{\omega,c} =0.
\end{align}
Therefore, by $\partial_x \phi_{\omega,c}\in L^{2}(\R),$ we obtain
from \eqref{eq:zeroeign} that $0$ is an eigenvalue of
$\mathcal{L}_{+}.$ By a classical ODE argument as in
\cite{Wein:stab:SJMA}, we obtain
\begin{align}\label{eq:kernel}
    \ker \mathcal{L}_{+} = \spann\left\{~\partial_x \phi_{\omega,c}~\right\}.
\end{align}
Thus, it follows from Strum-Liouville theory that $0$ is the second
eigenvalue of $\mathcal{L}_{+},$ and moreover $\mathcal{L}_{+}$
enjoys only one negative eigenvalue $-\lambda^2_{1}$ with a
$L^{2}(\R)$ normalized eigenfunction $\chi.$ More precisely, we have
\begin{align}\label{eq:negv}
    \mathcal{L}_{+}\chi = -\lambda^2_{1}\chi\quad\text{with }\quad \normto{\chi}=1.
\end{align}
Now, define
\begin{align}\label{eq:poseign}
    \mu\triangleq &
        \inf
            \ltl{
             \frac{\bilin{\mathcal{L}_{+}\psi}{\psi}}{\bilin{ \psi }{\psi}}
             ~:~ \psi \in L^2(\R), ~~
                \bilin{\psi}{\chi}
                =
                \bilin{\psi}{\partial_x\phi_{\omega,c}}   = 0~
            },
\end{align}
then by a classical variational argument, it is easy to see that
$\mu>0.$ Therefore, the space $L^{2}(\R)$ can be decomposed as a
direct sum as follows
\begin{align}\label{eq:decomp}
    L^{2}=N\bigoplus \ker \mathcal{L}_{+} \bigoplus P,
\end{align}
where $N=\spann\left\{~\chi~\right\},$ $\ker \mathcal{L}_{+}$ is
defined by \eqref{eq:kernel}, and $P$ is a closed subspace of
$L^{2}$ such that
\begin{align}
\label{eq:pos:a_sys}
    \bilin{\mathcal{L}_{+}\psi}{\psi}\geq \mu \bilin{\psi}{\psi},\quad \text{ for any } \psi\in P.
\end{align}

{\bf Step 2: Nonnegative property.}  We show
\begin{align*}
        \inf
            \ltl{
             \frac{\bilin{\mathcal{L}_{+}\psi}{\psi}}{\bilin{ \psi }{\psi}}
             ~:~ \psi\in L^2, ~~
             \bilin{\psi}{\phi_{\omega,c}}
           =     \bilin{\psi}{\phi_{\omega,c}^3}
           =   \bilin{\psi}{\partial_x\phi_{\omega,c}}   = 0~
            } \geq 0.
\end{align*}
In fact, by differentiating equation \eqref{eq:gr} with respect to
$c$ and $\omega,$ we have
\begin{align}
    \label{eq:stru}
    \mathcal{L}_{+}\partial_{c}\phi_{\omega,c}
    =
    \frac{c}{2}\phi_{\omega,c}-\frac{1}{2}\phi_{\omega,c}^{3},\quad
    \mathcal{L}_{+}\partial_{\omega}\phi_{\omega,c} =-\phi_{\omega,c}.
\end{align}
On one hand, \eqref{eq:decomp} allows us to decompose
$\partial_{c}\phi_{\omega,c}$ and $\partial_{\omega}\phi_{\omega,c}$
as follows,
  \begin{align}
    \label{var:decomp}
    \partial_{c}\phi_{\omega,c}
        = a_{1}\chi + b_{1}\partial_{x}\phi_{\omega,c}+p_{1},\quad
    \partial_{\omega}\phi_{\omega,c}
        = a_{2}\chi+b_{2}\partial_{x}\phi_{\omega,c}+p_{2}.
  \end{align}
where $a_{1},$ $a_{2},$ $b_{1}$ and $b_{2}$ are constants; $\chi$ is
defined by \eqref{eq:negv}; $p_{1}$ and $p_{2}$ belong to the
subspace $P$ defined by \eqref{eq:pos:a_sys}. On the other hand, for
any $\psi\in L^{2}(\R)$ with
\begin{align*}
    \bilin{\psi}{\phi_{\omega,c}}
    =
    \bilin{\psi}{\phi_{\omega,c}^3}
    =
    \bilin{\psi}{\partial_x\phi_{\omega,c}}   = 0,
\end{align*}
we decompose $\psi$ as follows
  \begin{align}
    \label{psi:decomp}
    \psi=a\chi+p,\quad\text{ with } a\in\R,\text{~and~} p\in P.
  \end{align}
By some straight calculations, we have
  \begin{align}
    \label{psi:p}
    \bilin{\mathcal{L}_{+} \psi}{\psi} &= -\lambda^{2}_1a^{2}+\bilin{\mathcal{L}_{+} p}{p}.
  \end{align}
Then, it follows from $\bilin{\psi}{\phi_{\omega,c}}= 0$ and
\eqref{eq:stru} that
  \begin{align*}
    \bilin{\psi}{\mathcal{L}_{+}\partial_{\omega}\phi_{\omega,c}}=0,
  \end{align*}
which together with \eqref{var:decomp} and \eqref{psi:decomp}
implies that
 \begin{align}
    \label{p:b}
    -aa_{2}\lambda^{2}_1+\bilin{\mathcal{L}_{+} p}{p_{2}}=0.
  \end{align}
By a similar argument as above, we have
 \begin{align}
    \label{p:a}
    -aa_{1}\lambda^{2}_1+\bilin{\mathcal{L}_{+} p}{p_{1}}=0.
  \end{align}
Next, since
  \begin{align*}
    \det d''\action{\omega}{c}<0,
  \end{align*}
there exists $\sts{\xi_{1}~,~\xi_{2}}\in\R^{2}$ such that
  \begin{align}
    \label{direct}
    \begin{pmatrix}
      \xi_{1} & \xi_{2} \\
    \end{pmatrix}
    d''\action{\omega}{c}
    \begin{pmatrix}
      \xi_{1} \\
      \xi_{2} \\
    \end{pmatrix}>0.
  \end{align}
Now, let $\sts{\xi_{1}~,~\xi_{2}}\in\R^{2}$ satisfy \eqref{direct},
and
  \begin{align*}
    p_{0}\triangleq~& \xi_{1}p_{1}+\xi_{2}p_{2},
  \end{align*}
then by a straight calculation, we have
  \begin{align}
    \bilin{\mathcal{L}_{+}p_{0}}{p_{0}}=&\;
            \xi_{1}^{2}\bilin{\mathcal{L}_{+}p_{1}}{p_{1}}
        +   2\xi_{1}\xi_{2}\bilin{\mathcal{L}_{+}p_{1}}{p_{2}}
        +   \xi_{2}^{2}\bilin{\mathcal{L}_{+}p_{2}}{p_{2}} \notag
        \\
        =&\;
            \xi_{1}^{2}\bilin{\mathcal{L}_{+}\partial_{c}\phi_{\omega,c}}{\partial_{c}\phi_{\omega,c}}
        +   2\xi_{1}\xi_{2}
            \bilin{\mathcal{L}_{+}\partial_{c}\phi_{\omega,c}}{\partial_{\omega}\phi_{\omega,c}}
            \notag
            \\
        &\;
        +   \xi_{2}^{2}
            \bilin{\mathcal{L}_{+}\partial_{\omega}\phi_{\omega,c}}{\partial_{\omega}\phi_{\omega,c}}
        +       \xi_{1}^{2}a_{1}^{2}\lambda^{2}_1
                +   2\xi_{1}\xi_{2}a_{1}a_{2}\lambda^{2}_1
                +   \xi_{2}^{2}a_{2}^{2}\lambda^{2}_1 \notag
        \\
        =&\;  -   \begin{pmatrix}
                    \xi_{1} & \xi_{2} \\
                \end{pmatrix}
                d''\action{\omega}{c}
                \begin{pmatrix}
                    \xi_{1} \\
                    \xi_{2} \\
                \end{pmatrix}
            +   \sts{a_1\xi_{1}+a_2\xi_{2}}^{2}\lambda^{2}_1 \notag
        \\
        <& \;  \sts{a_{1}\xi_{1}+a_{2}\xi_{2}}^{2}\lambda^{2}_1.     \label{est:p_0}
  \end{align}
Next, by \eqref{p:a}, \eqref{p:b}, \eqref{est:p_0} and the
Cauchy-Schwarz inequality, it is easy to see that,
  \begin{align*}
    \bilin{ \mathcal{L}_{+}p }{p}
    \geq
        \frac{ {\bilin{ \mathcal{L}_{+}p }{p_{0}}}^{2} }{ \bilin{ \mathcal{L}_{+}p_{0} }{p_{0}} }
    &\geq \frac{a^{2}\lambda^{2}_1\sts{ a_{1}\xi_{1}+a_{2}\xi_{2} }^{2} }{ \sts{ a_{1}\xi_{1}+a_{2}\xi_{2} }^{2} }
    =a^{2}\lambda^{2}_1,
  \end{align*}
which, together \eqref{psi:p}, implies that,
  \begin{align*}
    \bilin{ \mathcal{L}_{+}\psi}{ \psi } \geq \;0.
  \end{align*}

{\bf Step 3: Positive property.}  Last we show
\begin{align*}
        \inf
            \ltl{
             \frac{\bilin{\mathcal{L}_{+}\psi}{\psi}}{\bilin{ \psi }{\psi}}
             ~:~ \psi\in L^2(\R), ~
             \bilin{\psi}{\phi_{\omega,c}}
           =     \bilin{\psi}{\phi_{\omega,c}^3}
           =   \bilin{\psi}{\partial_x\phi_{\omega,c}}   = 0~
            } > 0.
\end{align*}

We argue by contradiction. Suppose that there exists a sequence
$\psi_{n}\in L^2(\R)$ such that
  \begin{align*}
    \bilin{ \mathcal{L}_{+}\psi_{n}}{ \psi_{n} }\to   0,
  \end{align*}
with
\begin{align*}
    \bilin{\psi_{n}}{\phi_{\omega,c}}
    =
    \bilin{\psi_{n}}{\phi_{\omega,c}^3}
    =
    \bilin{\psi_{n}}{\partial_x\phi_{\omega,c}}
    = 0
    \text{~~and~~}
    \bilin{\psi_{n}}{\psi_{n}}=1.
\end{align*}
By a decomposition similar as \eqref{psi:decomp}, we have for any
$n$
  \begin{align*}
    \psi_{n}=a_{n}\chi+p_{n},
    \quad\text{ with ~} a_{n}\in\R, \text{~and~} p_{n}\in P,
  \end{align*}
moreover, $\bilin{ \mathcal{L}_{+}p_{n} }{ p_{0}
}=\left(a_1\xi_1+a_2\xi_2\right)a_{n}\lambda^{2}_1.$ Therefore, by
the similar arguments as in Step $2$, we have
  \begin{align*}
     0\leftarrow\bilin{ \mathcal{L}_{+}\psi_{n}}{ \psi_{n} }
     &\geq -a^2_{n}\lambda^{2}_1
        +
        \frac{ a_{n}^{2}\sts{ \xi_{1}a_{1}+\xi_{2}a_{2}
        }^{2}\lambda^{4}_1
            }{
                \bilin{ \mathcal{L}_{+}p_{0} }{ p_{0} }
            }
     \\
     &=a_{n}^{2}\lambda^{2}_1
        \sts{
            \frac{
                \lambda^{2}_1\sts{ \xi_{1}a_{1}+\xi_{2}a_{2} }^{2}
                }{
                    \bilin{ \mathcal{L}_{+}p_{0} }{ p_{0} }
                }
            -1
            }.
  \end{align*}
Thus, it follows from \eqref{est:p_0} that,
  \begin{align*}
    a_{n}\to 0,\qquad \text{as } n\to\infty,
  \end{align*}
which implies that
\begin{align*}
  \bilin{ \mathcal{L}_{+}p_{n} }{ p_{n} }\to 0.
\end{align*}
Thus $p_{n}\to 0$ in $L^2(\R)$, which is in contradiction with
$\bilin{\psi_{n}}{\psi_{n}}=1.$ This ends the proof.


\section{The linearization of the action functional}\label{app:tsln:acfln}
In this part, we show Lemma \ref{lem:tsln:acfln}. First of all, we
show the following claim,
\begin{claim}\label{decay} Let $\mathcal{R}_{k}$ be one of the expression $R_{k},$~$\partial_{x}R_{k}$ and
$\partial_{x}^{2}R_{k}$, and $\mathfrak{g}$~and~$\mathfrak{h}$ be
defined by \eqref{ctf}, then
\begin{align}
\label{w-act}
    \int
        \abs{ \mathcal{R}_{1}\sts{t\,,\,x}~\mathcal{R}_{2}\sts{t\,,\,x} }
    \;\dx
    \leq
    C~
    e^{ -8\theta_{2}\sts{ \frac{L}{2} + 8\theta_{2}\;t } }.
\end{align}
\begin{align}
\label{cutoff-decay}
    \int
        \abs{ \mathcal{R}_{1}\sts{t\,,\,x}~\mathfrak{h}\sts{t\,,\,x} }
    \;\dx
    +
    \int
        \abs{ \mathcal{R}_{2}\sts{t\,,\,x}~\mathfrak{g}\sts{t\,,\,x} }
    \;\dx
    \leq
    C~
    e^{ -8\theta_{2}
        \sts{ \frac{L}{2} + 8\theta_{2}t }
    },
\end{align}
\end{claim}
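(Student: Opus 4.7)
The plan is to combine the pointwise exponential decay of the traveling wave profiles with the relevant separation information, which differs for the two inequalities: \eqref{w-act} will use the inter-wave separation $|x_2(t) - x_1(t)| \geq L/2 + \theta_1 t$ from \eqref{tsln:cent:dist}, while \eqref{cutoff-decay} will use that each individual wave center stays far from the cutoff transition region.

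First I would establish uniform pointwise bounds. From the explicit formula \eqref{tw:rsol:a}, $\phi_{\omega,c}$ and its first two $x$-derivatives decay like $e^{-\sqrt{4\omega-c^{2}}|x|/2}$ at infinity, so combined with \eqref{tsln:sl:rough} (which ensures that $(\omega_{k}(t),c_{k}(t))$ stays close to $(\omega_{k}^{0},c_{k}^{0})$ for $\alpha$ small), I get, uniformly in $t \in [0, T^{*}]$,
\begin{align*}
|\mathcal{R}_{k}(t,x)| \leq C\, e^{-\mu_{k}^{0}|x - x_{k}(t)|/2}, \qquad \mathcal{R}_{k} \in \{R_{k},\,\partial_{x}R_{k},\,\partial_{x}^{2}R_{k}\},
\end{align*}
with $\mu_{k}^{0} = \sqrt{4\omega_{k}^{0}-(c_{k}^{0})^{2}}/2$. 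By construction of $\theta_{2}$, one has $\mu_{k}^{0}/2 \geq 16\theta_{2}$, $(\sigma - c_{1}^{0}) \geq 16\theta_{2}$, $(c_{2}^{0}-\sigma) \geq 16\theta_{2}$, and $\theta_{1} \geq 4\theta_{2}$.

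For \eqref{w-act}, I would multiply the two pointwise bounds and evaluate the resulting convolution-type integral via the standard estimate
\begin{align*}
\int |\mathcal{R}_{1}\mathcal{R}_{2}|\,\dx \leq C\bigl(1 + |x_{1}(t) - x_{2}(t)|\bigr)\, e^{-\min(\mu_{1}^{0},\mu_{2}^{0})\,|x_{1}(t)-x_{2}(t)|/2},
\end{align*}
absorbing the polynomial prefactor into an exponential at a slightly reduced rate. Inserting $|x_{2}(t) - x_{1}(t)| \geq L/2 + \theta_{1}t$ together with the lower bounds on $\mu_{k}^{0}$ and $\theta_{1}$ listed above then delivers the claimed rate $8\theta_{2}(L/2 + 8\theta_{2}t)$.

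For \eqref{cutoff-decay}, I note that $\mathfrak{h}(t,\cdot)$ is supported on $\{x \geq \overline{x}^{0} + \sigma t - \sqrt{t+a}\}$, while $R_{1}$ is concentrated near $x_{1}(t)$. Mimicking the derivation of \eqref{efreg:h} in the proof of Lemma \ref{lem:lm:sm}, by integrating $\frac{d}{dt}(\overline{x}^{0}+\sigma t - x_{1}(t)) = \sigma - \dot{x}_{1}(t)$ using \eqref{tsln:para:dyn} and the initial separation $\overline{x}^{0} - x_{1}(0) \geq L/2 - O(\alpha)$, I obtain $\overline{x}^{0}+\sigma t - x_{1}(t) \geq L/4 + (\sigma-c_{1}^{0})t/2$. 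Using $\sqrt{t+a} \leq \sqrt{t} + L/8$ and the AM-GM bound $\sqrt{t} \leq (\sigma-c_{1}^{0})t/4 + 1/(\sigma-c_{1}^{0})$, for $L$ sufficiently large every $x$ in the support of $\mathfrak{h}$ satisfies $x - x_{1}(t) \geq L/16 + 4\theta_{2}t$. Integrating $|R_{1}(t,x)| \leq C e^{-\mu_{1}^{0}(x-x_{1}(t))/2}$ over the support yields the desired decay for $\int |R_{1}\mathfrak{h}|\,\dx$, and a symmetric argument (using $x_{2}(t) - \overline{x}^{0} - \sigma t \geq L/4 + (c_{2}^{0}-\sigma)t/2$) handles $\int |R_{2}\mathfrak{g}|\,\dx$. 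The main nuisance will be bookkeeping numerical constants so the exponent matches exactly; no deeper obstacle is expected beyond careful arithmetic using the listed inequalities.
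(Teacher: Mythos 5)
Your proposal is correct and follows essentially the same route as the paper: uniform pointwise exponential bounds on the profiles (uniform in $t$ via \eqref{tsln:sl:rough}), the separation estimates $x_2(t)-x_1(t)\geq \frac{L}{2}+\frac{c_2^0-c_1^0}{4}t$ and $\overline{x}^0+\sigma t-\sqrt{t+a}-x_1(t)\geq \frac{L}{4}+4\theta_2 t$ obtained by integrating the modulation bounds \eqref{tsln:para:dyn}, and direct integration of the exponential tails. The only bookkeeping point: to land exactly on the stated rate $8\theta_2\sts{\frac{L}{2}+8\theta_2 t}$ in \eqref{w-act} you should use $\frac{c_2^0-c_1^0}{4}\geq 8\theta_2$ (which follows from $\theta_2\leq\frac{1}{16}\min\sts{\sigma-c_1^0,\,c_2^0-\sigma}$ and $(\sigma-c_1^0)+(c_2^0-\sigma)=c_2^0-c_1^0$) rather than $\theta_1\geq 4\theta_2$, which only yields $4\theta_2 t$ in the exponent.
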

\begin{proof}

Firstly, by Lemma \ref{lem:tsln:decomp}, we have
\begin{align*}
  \dot{x}_{2}\sts{t}-\dot{x}_{1}\sts{t}
  =
&
    \sts{\; c_{2}^{0}-c_{1}^{0}\;}
    +
    \sts{\; \dot{x}_{2}\sts{t}- c_{2}\sts{t} \;}
    -
    \sts{\; \dot{x}_{1}\sts{t}- c_{1}\sts{t} \;}
\\
&
    +
    \sts{\; c_{2}\sts{t} - c_{2}\sts{0} \;}
    -
    \sts{\; c_{1}\sts{t} - c_{1}\sts{0}\; }
    +
    \sts{\; c_{2}\sts{0} - c_{2}^{0}\; }
    -
    \sts{\; c_{1}\sts{0} - c_{1}^{0} \; }
\\
\geq &
    \frac{ c_{2}^{0}-c_{1}^{0} }{ 4 },
\end{align*}
therefore, integrating in $t>0$ gives us that for $t>0$
\begin{align*}
    x_{2}\sts{t}-x_{1}\sts{t}
   \geq
    \frac{L}{2} + \frac{ c_{2}^{0}-c_{1}^{0} }{ 4 }\; t
    \geq
    \frac{L}{2} + 8\theta_{2}\;t.
\end{align*}
Thus,
\begin{align*}
    \int
        \abs{ \mathcal{R}_{1}\sts{t\,,\,x}~\mathcal{R}_{2}\sts{t\,,\,x} }
    \;\dx
    &
    \leq
    C
    \int
        e^{ -\frac{ \sqrt{ 4\omega_{1}\sts{t}-c_{1}^{2}\sts{t} } }{2}\abs{ x-x_{1}\sts{t} } }
        ~
        e^{ -\frac{ \sqrt{4\omega_{2}\sts{t} -c_{2}^{2}\sts{t} } }{2}\abs{ x-x_{2}\sts{t} } }
    \;\dx
\\
    &
    \leq
    C
    \int
        e^{ -\frac{ \sqrt{ 4\omega_{1}^{0}-\sts{c_{1}^{0}}^{2} } }{4}\abs{ x-x_{1}\sts{t} } }
        ~
        e^{ -\frac{ \sqrt{ 4\omega_{2}^{0}-\sts{c_{2}^{0}}^{2} } }{4}\abs{ x-x_{2}\sts{t} } }
    \;\dx
\\
    &
    \leq
    C
    e^{ -\frac{ \sqrt{ 4\omega_{1}^{0}-\sts{c_{1}^{0}}^{2} } }{8}\abs{ x_{2}\sts{t}-x_{1}\sts{t} }
    }
    \\
    & \leq
    C
    e^{ -8\theta_{2}\sts{ \frac{L}{2} + 8\theta_{2}\;t } }.
\end{align*}

Secondly, as for \eqref{cutoff-decay}, we only estimate the former
term since the later term can be proved in the same way. By Lemma
\ref{lem:tsln:decomp}, we have for sufficiently small $\alpha_0$ and
sufficiently large $L_0$
\begin{align}
\notag \frac{d}{dt}\left(\overline{x}^{0} + \sigma t - \sqrt{t+a} -
x_{1}\sts{t}\right)
    =&\;
    \sigma - \frac{1}{ 2\sqrt{t+a}}-\dot{x}_{1}\sts{t}
    \\
    \notag
    \geq
    &
    \sts{ \sigma-c_{1}^{0} } - \frac{4}{L} - \sts{\; \dot{x}_{1}\sts{t} - c_{1}\sts{t} \;} + \sts{\; c_{1}^{0} - c_{1}\sts{t}\; }
    \\
    \geq
    &
    \frac{\sigma-c_{1}^{0}}{4},
\end{align}
By integrating with respect to $t,$ we obtain
\begin{align}\label{h-rate}
    \notag
\overline{x}^{0} + \sigma t - \sqrt{t+a} - x_{1}\sts{t}
    \geq
    & \;
    \overline{x}^{0} - \frac{L}{8} - x_{1}\sts{0} + \frac{\sigma-c_{1}^{0}}{4}\; t
    \\
    \geq
    &
    \frac{L}{4} + 4\theta_{2}t.
\end{align}
This implies that
\begin{align*}
    \int
        \abs{ \mathcal{R}_{1}\sts{t\,,\,x}~\mathfrak{h}\sts{t\,,\,x} }
    \;\dx
    \leq
    &\;
    C
    \int_{x > \overline{x}^{0} + \sigma t -  \sqrt{t+a}}
        e^{ -\frac{ \sqrt{ 4\omega_{1}\sts{t}-c_{1}^{2}\sts{t} } }{2}\abs{ x-x_{1}\sts{t} } }
    \;\dx
\\
    \leq
    &\;
    C
    \int_{x > \overline{x}^{0} + \sigma t -  \sqrt{t+a}}
        e^{ -\frac{ \sqrt{ 4\omega_{1}^{0}-\sts{c_{1}^{0}}^{2} } }{4}\abs{ x-x_{1}\sts{t} } }
    \;\dx
\\
    \leq & \;
    C
    e^{ -16\theta_{2}
        \sts{ \frac{L}{4} + 4\theta_{2}t }
    }.
\end{align*}
This ends the proof.
\end{proof}
\begin{proof}[Proof of Lemma \ref{lem:tsln:acfln}]
We now expand $\mathfrak{E}\sts{u\sts{t}}$ one by one.

\noindent{\textbf{The term: $\displaystyle\int\abs{ \partial_{x}
u\sts{t} }^{2}\;\dx.$ }} By \eqref{w-act} and integration by parts,
we have
\begin{align*}
    \int\abs{ \partial_{x} u\sts{t} }^{2}\;\dx
= &
    \sum_{k=1}^{2}\int\abs{ \partial_{x} R_{k}\sts{t} }^{2}\;\dx
     -
    \sum_{k=1}^{2}
    2
    \Re\int \partial_{x}^{2}R_{k}\sts{t}\; \overline{\varepsilon}\sts{t}\;\dx
 +
    \int\abs{ \partial_{x} \varepsilon\sts{t} }^{2}\;\dx
    \\
&
      +
    \bigo{ e^{ -8\theta_{2}\sts{ \frac{L}{2} + 8\theta_{2}\;t } } }.
\end{align*}

\noindent{\textbf{The term: $\displaystyle\int
\abs{u\sts{t}}^{6}\;\dx.$ }} By \eqref{w-act} and
Gagliardo-Nirenberg inequality, we have
\begin{align*}
    \int \abs{u\sts{t}}^{6}\;\dx
= &
    \sum_{k=1}^{2}\int
        \abs{ R_{k}\sts{t} }^{6}
    \;\dx
    +
    \sum_{k=1}^{2}
    \int
        6
        \abs{ R_{k}\sts{t} }^{4}\Re\sts{ R_{k} \;\overline{ \varepsilon }}\sts{t}
    \;\dx
\\
&
    +
    \sum_{k=1}^{2}
    \int
        3
        \abs{ R_{k}\sts{t} }^{4} \abs{ \varepsilon\sts{t} }^{2}
        +
        12
        \abs{ R_{k}\sts{t} }^{2} \left[ \Re\sts{ R_{k} \;\overline{ \varepsilon }}\sts{t} \right]^{2}
    \;\dx
\\
&
  +
    \normhone{\varepsilon\sts{t}}^{2}\beta\sts{ \normhone{\varepsilon\sts{t}}
    }
  +
    \bigo{
        e^{ -8\theta_{2}\sts{ \frac{L}{2} + 8\theta_{2}\;t } }
        }
   .
\end{align*}

\noindent{\textbf{The term:
$\displaystyle\frac{\omega_{1}\sts{0}}{2}\int
\abs{u\sts{t}}^{2}\mathfrak{g}\sts{t}\;\dx$ and $\displaystyle
\frac{\omega_{2}\sts{0}}{2}\int
\abs{u\sts{t}}^{2}\mathfrak{h}\sts{t}\;\dx.$ }} By
\eqref{cutoff-decay} and the Cauchy-Schwarz inequality, we have
\begin{align*}
&
    \frac{\omega_{1}\sts{0}}{2}\int \abs{u\sts{t}}^{2}\mathfrak{g}\sts{t}\;\dx
\\
= &
    \frac{\omega_{1}\sts{0}}{2}
    \int
        \abs{\sum_{k=1}^{2} R_{k}\sts{t} + \varepsilon\sts{t}}^{2}\mathfrak{g}\sts{t}
    \;\dx
\\
= &
    \frac{\omega_{1}\sts{0}}{2}
    \int
        \abs{ R_{1}\sts{t} }^{2}
        -
        \abs{ R_{1}\sts{t} }^{2} \mathfrak{h}\sts{t}
        +
        \abs{ R_{2}\sts{t} }^{2} \mathfrak{g}\sts{t}
        +
        2
        \Re\sts{ R_{1}\;\overline{ R_{2} } }\sts{t}\mathfrak{g}\sts{t}
    \;\dx
\\
&
+
    \frac{\omega_{1}\sts{0}}{2}
    \int
        \Re\sts{ R_{1}\;\overline{ \varepsilon } }
        -
        \Re\sts{ R_{1}\;\overline{ \varepsilon } }\sts{t}\mathfrak{h}\sts{t}
        +
        \Re\sts{ R_{2}\;\overline{ \varepsilon } }\sts{t}\mathfrak{g}\sts{t}
    \;\dx
\\
&
+
    \frac{\omega_{1}\sts{0}}{2}
    \int
        \abs{ \varepsilon\sts{t} }^{2}\mathfrak{g}\sts{t}
    \;\dx
\\
=
&
    \frac{\omega_{1}\sts{0}}{2}
    \int
        \abs{ R_{1}\sts{t} }^{2}
    \;\dx
+
    \frac{\omega_{1}\sts{0}}{2}
    \int
        \Re\sts{ R_{1}\;\overline{ \varepsilon } }
    \;\dx
+
    \frac{\omega_{1}\sts{0}}{2}
    \int
        \abs{ \varepsilon\sts{t} }^{2}\mathfrak{g}\sts{t}
    \;\dx
\\
&
+
    \bigo{
        \int
            \abs{ R_{1}\sts{t} }^{2} \mathfrak{h}\sts{t}
            +
            \abs{ R_{2}\sts{t} }^{2} \mathfrak{g}\sts{t}
            +
            \abs{ R_{1}\sts{t}}\;\abs{ R_{2}\sts{t}}\mathfrak{g}\sts{t}
        \;\dx
    }
\\
&
    +
    \bigo{
        \int
            \abs{ R_{1}\sts{t}}\;\abs{ \varepsilon\sts{t}}\mathfrak{h}\sts{t}
            +
            \abs{ R_{2}\sts{t}}\;\abs{ \varepsilon\sts{t}}\mathfrak{g}\sts{t}
        \;\dx
    }
\\
=
&
    \frac{\omega_{1}\sts{0}}{2}
    \int
        \abs{ R_{1}\sts{t} }^{2}
    \;\dx
    +
    \frac{\omega_{1}\sts{0}}{2}
    \int
        \Re\sts{ R_{1}\;\overline{ \varepsilon } }
    \;\dx
    +
    \frac{\omega_{1}\sts{t}}{2}
    \int
        \abs{ \varepsilon\sts{t} }^{2}\mathfrak{g}\sts{t}
    \;\dx
\\
&
    +
    \frac{ \omega_{1}\sts{0} - \omega_{1}\sts{t} }{ 2 }
    \int
        \abs{ \varepsilon\sts{t} }^{2}\mathfrak{g}\sts{t}
    \;\dx
\\
&
+
    \bigo{
        \int
            \abs{ R_{1}\sts{t} }^{2} \mathfrak{h}\sts{t}
            +
            \abs{ R_{2}\sts{t} }^{2} \mathfrak{g}\sts{t}
            +
            \abs{ R_{1}\sts{t}}\;\abs{ R_{2}\sts{t}}\mathfrak{g}\sts{t}
        \;\dx
    }
\\
&
    +
    \bigo{
        \int
            \abs{ R_{1}\sts{t}}\;\abs{ \varepsilon\sts{t}}\mathfrak{h}\sts{t}
            +
            \abs{ R_{2}\sts{t}}\;\abs{ \varepsilon\sts{t}}\mathfrak{g}\sts{t}
        \;\dx
    }
\\
= &
    \frac{\omega_{1}\sts{0}}{2}
    \int
        \abs{ R_{1}\sts{t} }^{2}
    \;\dx
    +
    \frac{\omega_{1}\sts{0}}{2}
    \int
        \Re\sts{ R_{1}\;\overline{ \varepsilon } }
    \;\dx
    +
    \frac{\omega_{1}\sts{t}}{2}
    \int
        \abs{ \varepsilon\sts{t} }^{2}\mathfrak{g}\sts{t}
    \;\dx
\\
&
    +
    \normhone{\varepsilon\sts{t}}^{2} \beta\sts{ \normhone{\varepsilon\sts{t}} }
    +
    \bigo{ \abs{ \omega_{1}\sts{t} - \omega_{1}\sts{0} }^{2} }
    +
    \bigo{
        e^{ -8\theta_{2}
        \sts{ \frac{L}{2} + 8\theta_{2}t }
        }
    },
\end{align*}
and
\begin{align*}
&
    \frac{\omega_{2}\sts{0}}{2}\int \abs{u\sts{t}}^{2}\mathfrak{h}\sts{t}\;\dx
\\
= &
    \frac{\omega_{2}\sts{0}}{2}
    \int
        \abs{ R_{2}\sts{t} }^{2}
    \;\dx
    +
    \frac{\omega_{2}\sts{0}}{2}
    \int
        \Re\sts{ R_{2}\;\overline{ \varepsilon } }
    \;\dx
       +
    \frac{\omega_{2}\sts{t}}{2}
    \int
        \abs{ \varepsilon\sts{t} }^{2}\mathfrak{g}\sts{t}
    \;\dx
 \\
&   +
    \normhone{\varepsilon\sts{t}}^{2} \beta\sts{ \normhone{\varepsilon\sts{t}} }
    +
    \bigo{ \abs{ \omega_{2}\sts{t} - \omega_{2}\sts{0} }^{2} }
    +
    \bigo{
        e^{ -8\theta_{2}
        \sts{ \frac{L}{2} + 8\theta_{2}t }
        }
    }.
\end{align*}

\noindent \textbf{The term $\displaystyle - \frac{c_{1}\sts{0}}{2}\;
        \Im\int\sts{ \overline{u}\partial_{x}u }\sts{t}\mathfrak{g}\sts{t}\;\dx$ and $\displaystyle-   \frac{c_{2}\sts{0}}{2}\;
        \Im\int\sts{ \overline{u}\partial_{x}u
        }\sts{t}\mathfrak{h}\sts{t}\;\dx.$} Similarly, we have
\begin{align*}
&
    -   \frac{c_{1}\sts{0}}{2}\;
        \Im\int\sts{ \overline{u}\partial_{x}u }\sts{t}\mathfrak{g}\sts{t}\;\dx
\\
 = &
    -   \frac{c_{1}\sts{0}}{2}\;
        \Im\int \overline{R}_{1}\; \partial_{x} R_{1}\sts{t}    \;\dx
    -   c_{1}\sts{0}\;
        \Im\int \partial_{x} R_{1}\sts{t}\;\overline{\varepsilon} \;\dx
-   \frac{c_{1}\sts{t}}{2}\;
        \Im\int \overline{\varepsilon}\; \partial_{x} \varepsilon\sts{t}\mathfrak{g}\sts{t}\;\dx
  \\
&
      +
    \normhone{\varepsilon\sts{t}}^{2} \beta\sts{ \normhone{\varepsilon\sts{t}} }
 +
    \bigo{ \abs{ c_{1}\sts{t} - c_{1}\sts{0} }^{2} }
+
    \bigo{
        e^{ -8\theta_{2}
        \sts{ \frac{L}{2} + 8\theta_{2}t }
        }
    },
\end{align*}
and
\begin{align*}
&
    -   \frac{c_{2}\sts{0}}{2}\;
        \Im\int\sts{ \overline{u}\partial_{x}u }\sts{t}\sts{t}\mathfrak{h}\sts{t}\;\dx
\\
= &
    -   \frac{c_{2}\sts{0}}{2}\;
        \Im\int \overline{R}_{2}\; \partial_{x} R_{2}\sts{t}    \;\dx
    -   c_{2}\sts{0}\;
        \Im\int \partial_{x} R_{2}\sts{t}\;\overline{\varepsilon} \;\dx
 -   \frac{c_{2}\sts{t}}{2}\;
        \Im\int \overline{\varepsilon}\; \partial_{x} \varepsilon\sts{t}\mathfrak{h}\sts{t}\;\dx
  \\
&
     +
        \normhone{\varepsilon\sts{t}}^{2} \beta\sts{ \normhone{\varepsilon\sts{t}} }
 +
    \bigo{  \abs{ c_{2}\sts{t} - c_{2}\sts{0}}^{2} }
+
    \bigo{
        e^{ -8\theta_{2}
        \sts{ \frac{L}{2} + 8\theta_{2}t }
        }
    }.
\end{align*}

\noindent \textbf{The term $\displaystyle \frac{c_{1}\sts{0}}{8}\int
\abs{u\sts{t}}^{4}\mathfrak{g}\sts{t}\;\dx$ and
$\displaystyle\frac{c_{2}\sts{0}}{8}\int
\abs{u\sts{t}}^{4}\mathfrak{h}\sts{t}\;\dx.$}
\begin{align*}
&
    \frac{c_{1}\sts{0}}{8}\int \abs{u\sts{t}}^{4}\mathfrak{g}\sts{t}\;\dx
\\
= &
    \frac{c_{1}\sts{0}}{8}
    \int
            \abs{R_{1}\sts{t}}^{4}
    \;\dx
    +
    \frac{c_{1}\sts{0}}{4}
    \int
            \abs{R_{1}\sts{t}}^{2}\Re\sts{R_{1}\;\overline{\varepsilon}}\sts{t}
    \;\dx
    \\
&
    +
    \frac{c_{1}\sts{t}}{8}
    \int
            2
            \abs{R_{1}\sts{t}}^{2}\abs{\varepsilon\sts{t}}^{2}
            +
            \left[
                 \Re\sts{R_{1}\;\overline{\varepsilon}}\sts{t}
            \right]^{2}
    \;\dx
    \\
& +
    \normhone{\varepsilon\sts{t}}^{2} \beta\sts{ \normhone{\varepsilon\sts{t}} }
+
    \bigo{  \abs{ c_{1}\sts{t} - c_{1}\sts{0}}^{2} }
+
    \bigo{
        e^{ -8\theta_{2}
        \sts{ \frac{L}{2} + 8\theta_{2}t }
        }
    },
\end{align*}
and
\begin{align*}
&
    \frac{c_{2}\sts{0}}{8}\int \abs{u\sts{t}}^{4}\mathfrak{h}\sts{t}\;\dx
\\
= &
    \frac{c_{2}\sts{0}}{8}
    \int
            \abs{R_{2}\sts{t}}^{4}
    \;\dx
    +
    \frac{c_{2}\sts{0}}{4}
    \int
            \abs{R_{2}\sts{t}}^{2}\Re\sts{R_{2}\;\overline{\varepsilon}}\sts{t}
    \;\dx
\\
&
    +
    \frac{c_{2}\sts{t}}{8}
    \int
            2
            \abs{R_{2}\sts{t}}^{2}\abs{\varepsilon\sts{t}}^{2}
            +
            \left[
                 \Re\sts{R_{2}\;\overline{\varepsilon}}\sts{t}
            \right]^{2}
    \;\dx
   \\
&  +
    \normhone{\varepsilon\sts{t}}^{2} \beta\sts{ \normhone{\varepsilon\sts{t}} }
+
    \bigo{  \abs{ c_{2}\sts{t} - c_{2}\sts{0}}^{2} }
+
    \bigo{
        e^{ -8\theta_{2}
        \sts{ \frac{L}{2} + 8\theta_{2}t }
        }
    }.
\end{align*}
Summing up the above terms, we can conclude the proof by
\eqref{eq:tw:tvar} for $k=1, 2$ and  the orthogonal conditions
\eqref{tsl:orth:nondeg}.
\end{proof}


\section{the coercivity of the localized quadratic term}\label{app:tsln:coer} Let $L$ be
large enough, $x_{1}$ and $x_{2}\in\R$ with
$x_{2}-x_{1}>\frac{L}{2}.$ Now, define
\begin{align}
\label{eq:app:temp:g}
    g\sts{x}=
    \left\{
        \begin{array}{ll}
          1,         &   x\leq x_{1}+\frac{L}{8},                \\
          0<\cdot<1, &   x_{1}+\frac{L}{8} < x < x_{2}-\frac{L}{8},   \\
          0,         &   x\geq x_{2}-\frac{L}{8},
        \end{array}
    \right. \quad\text{and}\; h\sts{x} = 1 - g\sts{x}.
\end{align}

In order to prove Lemma \ref{lem:tsln:coer}, it suffices to show the
following result.
\begin{lemma}
\label{lem:static:2coercivity} Let $L>1$ be large enough, $g,$ $h$
be given by \eqref{eq:app:temp:g}. Then there exists $C_1>0$ such
that
     \begin{align*}
       \mathcal{H}_{2}\sts{~\varepsilon~,~\varepsilon~}\geq C_1 \normhone{ \varepsilon },
     \end{align*}
     where
     \begin{align*}
       \mathcal{H}_{2}\sts{~\varepsilon~,~\varepsilon~}
       =
       &
       \frac{1}{2}\int \abs{\varepsilon_{x}}^{2}
       -
       \frac{1}{32}
       \left(
        3
        \int \abs{R_{1}}^{4}\abs{\varepsilon}^{2}
        +
        12
        \int\abs{R_{1}}^{2}\left[ \Re\sts{ \overline{R}_{1}\varepsilon } \right]^{2}
       \right)
     \\
     &
     -
       \frac{1}{32}
       \left(
        3
        \int \abs{R_{2}}^{4}\abs{\varepsilon}^{2}
        +
        12
        \int\abs{R_{2}}^{2}\left[ \Re\sts{ \overline{R}_{2}\varepsilon } \right]^{2}
       \right)
     \\
     &
       +
       \frac{ \omega_{1} }{ 2 }\int\abs{ \varepsilon }^{2}g
       -
       \frac{c_{1}}{2}\Im\int \overline{\varepsilon}\varepsilon_{x}g
       +
       \frac{c_{1}}{8}
        \sts{
            2
            \int \abs{R_{1}}^{2}\abs{\varepsilon}^{2}
            +
            4
            \int\left[ \Re\sts{ \overline{R}_{1}\varepsilon } \right]^{2}
        }
     \\
     &
       +
       \frac{ \omega_{2} }{ 2 }\int\abs{ \varepsilon }^{2}h
       -
       \frac{c_{2}}{2}\Im\int \overline{\varepsilon}\varepsilon_{x}h
       +
       \frac{c_{2}}{8}
        \sts{
            2
            \int \abs{R_{2}}^{2}\abs{\varepsilon}^{2}
            +
            4
            \int\left[ \Re\sts{ \overline{R}_{2}\varepsilon } \right]^{2}
        },
     \end{align*}
     with $R_{k}\sts{x}=\varphi_{\omega_{k},c_{k}}\sts{x - x_{k}} e^{\i \gamma_{k}}$\,$\sts{ k=1,\,2}.$
\end{lemma}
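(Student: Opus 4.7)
The plan is to reduce the two-soliton quadratic form $\mathcal{H}_2$ to a sum of two single-soliton quadratic forms of the type studied in Section \ref{sect:coer}, and then apply Proposition \ref{prop:coer:asln} / Lemma \ref{lem:asln:quadcoer} separately to each piece. The mechanism is a localization–partition adapted to the two wave centers $x_1, x_2$. Accordingly, I would first refine the cutoffs $g, h$ to smooth functions $\chi_1^2 = g$, $\chi_2^2 = h$ with $\chi_1^2 + \chi_2^2 \equiv 1$, supported so that $\chi_1 \equiv 1$ on $(-\infty, x_1 + L/8]$ and $\chi_2 \equiv 1$ on $[x_2 - L/8, \infty)$, and with $|\chi_k'|^2 = O(L^{-2})$ uniformly. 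I would then set $\eta_k \triangleq \chi_k \varepsilon$ for $k=1,2$, which serve as the ``local pieces'' of $\varepsilon$ near each wave.

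Next, I would expand $\mathcal{H}_2(\varepsilon, \varepsilon)$ by splitting each term via $\chi_1^2 + \chi_2^2 = 1$. The mass-type terms $\tfrac{\omega_k}{2}\int|\varepsilon|^2 g_k$ and the momentum-type terms $-\tfrac{c_k}{2}\Im\int \bar\varepsilon \partial_x \varepsilon\, g_k$ match exactly with $\tfrac{\omega_k}{2}\int|\eta_k|^2$ and $-\tfrac{c_k}{2}\Im\int \bar\eta_k \partial_x \eta_k$, since the cross contributions involve $\chi_k \chi_k' = \tfrac{1}{2}(\chi_k^2)'$, which is real and drops out of the imaginary part. For the kinetic energy, one has $\sum_k \int|\partial_x \eta_k|^2 = \int|\partial_x \varepsilon|^2 + \int((\chi_1')^2 + (\chi_2')^2)|\varepsilon|^2$, the extra term being $O(L^{-2})\|\varepsilon\|_2^2$. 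For the nonlinear $|R_k|^4$- and $|R_k|^2$-pieces, the missing cross contributions $\int|R_k|^4 \chi_{k'}^2 |\varepsilon|^2$ with $k'\neq k$ are bounded by $e^{-\theta L}\|\varepsilon\|_2^2$, since $R_k$ is exponentially small on $\operatorname{supp}\chi_{k'}$ in view of $x_2 - x_1 > L/2$ and the explicit decay of $\varphi_{\omega_k, c_k}$. This produces the identity
$$\mathcal{H}_2(\varepsilon, \varepsilon) = \mathcal{H}_{\omega_1, c_1}(\eta_1, \eta_1) + \mathcal{H}_{\omega_2, c_2}(\eta_2, \eta_2) + O\!\bigl(L^{-2} + e^{-\theta L}\bigr)\|\varepsilon\|_{H^1}^2,$$
where $\mathcal{H}_{\omega_k,c_k}$ is the single-soliton linearized energy from Lemma \ref{lem:enlin:asln} with $R$ replaced by $R_k$.

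The key step is then to apply Lemma \ref{lem:asln:quadcoer} to each $\eta_k$. Because $\varepsilon$ satisfies the orthogonality conditions \eqref{tsl:orth:nondeg}, \eqref{tsl:orth:sym}, the cutoff versions $\eta_k$ satisfy them only approximately; for instance, $\Re\int R_k \bar\eta_k = \Re\int R_k(\chi_k-1)\bar\varepsilon$, which is $O(e^{-\theta L/8})\|\varepsilon\|_2$ since $(\chi_k - 1)R_k$ is supported where $R_k$ is exponentially small. I would then write $\eta_k = \tilde{\eta}_k + \zeta_k$, where $\zeta_k$ is the unique linear combination of the four ``bad'' directions $R_k,\; iR_k,\; \partial_x R_k,\; i\partial_x R_k + \tfrac12|R_k|^2 R_k$ arranged so that $\tilde{\eta}_k$ satisfies the four orthogonality conditions exactly. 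The coefficients of $\zeta_k$ are determined by a $4\times 4$ linear system whose Gram matrix is non-degenerate (exactly the Jacobian block appearing in the proof of Lemma \ref{lem:aslnst:decomp}) with right-hand side of size $O(e^{-\theta L/8})\|\varepsilon\|_{H^1}$, giving $\|\zeta_k\|_{H^1} \leq Ce^{-\theta L/8}\|\varepsilon\|_{H^1}$. Applying Lemma \ref{lem:asln:quadcoer} to $\tilde{\eta}_k$ and using the bilinearity estimate $|\mathcal{H}_{\omega_k,c_k}(\eta_k,\eta_k) - \mathcal{H}_{\omega_k,c_k}(\tilde{\eta}_k,\tilde{\eta}_k)| \leq C\|\zeta_k\|_{H^1}\|\eta_k\|_{H^1}$ yields $\mathcal{H}_{\omega_k,c_k}(\eta_k,\eta_k) \geq C\|\eta_k\|_{H^1}^2 - Ce^{-\theta L/8}\|\varepsilon\|_{H^1}^2$.

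Summing over $k$, invoking the comparison $\sum_k \|\eta_k\|_{H^1}^2 \geq (1 - CL^{-2})\|\varepsilon\|_{H^1}^2$ (again from $\chi_1^2+\chi_2^2=1$ and the $L^{-2}$ commutator), and absorbing all $O(L^{-2}+e^{-\theta L/8})\|\varepsilon\|_{H^1}^2$ errors into the main term for $L$ large enough gives $\mathcal{H}_2(\varepsilon,\varepsilon) \geq C_1\|\varepsilon\|_{H^1}^2$. The main technical obstacle I expect is the bookkeeping of small errors: ensuring that the orthogonality defect, the cutoff–derivative commutator, and the cross-interaction $R_1$–$R_2$ terms are \emph{simultaneously} absorbable into a single positive constant, while the coercivity constant provided by Proposition \ref{prop:coer:asln} remains uniform for parameters $(\omega_k, c_k)$ in a neighborhood of $(\omega_k^0, c_k^0)$. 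The latter uniformity follows from the spectral continuity of $\mathcal{L}_\pm$ under the non-degeneracy $\det d''(\omega_k^0, c_k^0) < 0$, which is guaranteed by the assumption $(c_k^0)^2 < 4\omega_k^0$.
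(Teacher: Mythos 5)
Your argument is correct in outline and rests on the same two pillars as the paper's proof --- localize $\varepsilon$ near each wave, check that each localized piece satisfies the four orthogonality conditions up to $O(e^{-\theta L/8})\normhone{\varepsilon}$ defects, and invoke the single-soliton coercivity of Lemma \ref{lem:asln:quadcoer} --- but the localization device is genuinely different. The paper does not use a partition of unity: it introduces everywhere-positive, exponentially decaying weights $\Phi_{B,x_k}$, proves a weighted coercivity estimate (Lemma \ref{claim:app:loc:coercive}) via the substitution $\zeta=\sqrt{\Phi_{B,x_k}}\,\varepsilon$, and then controls the mismatch between the given cutoffs $g,h$ and the weights by pairing it with a share of the kinetic energy and using the pointwise bound $\abs{\varepsilon_x}^2+\omega_k\abs{\varepsilon}^2-c_k\Im\sts{\overline{\varepsilon}\varepsilon_x}\geq\delta_k\sts{\abs{\varepsilon_x}^2+\abs{\varepsilon}^2}$, valid precisely because $c_k^2<4\omega_k$. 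Your IMS-type splitting $\chi_1^2+\chi_2^2=1$ with $\chi_1^2=g$ removes that mismatch by construction and replaces it with the commutator $\tfrac12\int\big((\chi_1')^2+(\chi_2')^2\big)\abs{\varepsilon}^2=O(L^{-2})\normto{\varepsilon}^2$, which is cleaner; your bookkeeping of the $R_1$--$R_2$ cross terms and of the momentum cross term $\chi_k\chi_k'\abs{\varepsilon}^2$ (real, hence dropping from the imaginary part) is also correct. The one point you must pin down is that your proof needs $\sqrt{g}$ and $\sqrt{h}=\sqrt{1-g}$ to have derivatives $O(1/L)$, i.e.\ $(g')^2\lesssim L^{-2}\min(g,1-g)$, which is \emph{not} contained in \eqref{eq:app:temp:g}, and the paper's condition \eqref{est:ctff} only gives $(h')^2\leq Ch$, not $(h')^2\leq C(1-h)$. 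This is not a cosmetic issue, because the lemma is sensitive to the choice of $g$ on the transition region: replacing $g$ by another admissible cutoff $\tilde g$ changes $\mathcal{H}_{2}$ by $\tfrac{\omega_1-\omega_2}{2}\int\abs{\varepsilon}^2(g-\tilde g)-\tfrac{c_1-c_2}{2}\Im\int\overline{\varepsilon}\varepsilon_x(g-\tilde g)$, which is of order $\normhone{\varepsilon}^2$ with a constant that is not small. So you should either impose this extra (easily arranged) symmetry-type condition on the base cutoff used in Section \ref{sect:mono forms}, or treat the discrepancy $g-\chi_1^2$ exactly as the paper treats $g-\Phi_{B,x_1}$. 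A minor further remark: the Gram matrix of your projection step is the pairing of the four orthogonality functionals against whatever four directions you subtract, which is not literally the block $\Xi_{k,k}$ unless you subtract $\partial_{\omega_k}R_k,\ \partial_{c_k}R_k,\ \partial_{x_k}R_k,\ \i R_k$; alternatively one can avoid the projection entirely by using the penalized form of the coercivity, as the paper does inside the proof of Lemma \ref{claim:app:loc:coercive}.
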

First, we give a localized version of the `single solitary' coercive
result. For the convenience of notation, we denote
   \begin{align*}
     R\sts{x} = \varphi_{\omega,c}\sts{x-y_0}e^{\i\gamma},
   \end{align*}
   with $4\omega>c^{2},$ $y_{0},\,\theta\in\R.$
Let $\Phi\,:\,\R\mapsto \R$ be an even $C^{2}$ function with
   \begin{align*}
        \Phi\sts{x}=
        \left\{
            \begin{array}{ll}
                1,         &   \abs{x}\leq 1,                                \\
                e^{-|x|}\leq\cdot\leq 3 e^{-|x|}, &   1<\abs{x}<2,   \\
                e^{-|x|},         &   \abs{x}\geq 2,
            \end{array}
        \right.
   \end{align*}
   and $\Phi'\sts{x}\leq 0$ for $x>0.$
\begin{lemma}
\label{claim:app:loc:coercive} Let $B>1$ be large enough. If
$\varepsilon\in H^{1}\sts{\R}$ satisfies the following orthogonality
condition,
\begin{align*}
&
        \Re\int
 R(x)\; \overline{\varepsilon(x)}
        \;\dx =0,
\quad
        \Re\int\sts{\i \partial_x R + \frac12
          \abs{R}^2R} (x)\; \overline{\varepsilon(x)}
        \;\dx =0,
\\
 &   \Re\int \partial_x R(x)\; \overline{\varepsilon(x)}
        \;\dx =0,
 \quad
   \Re\int\i R(x)\;
           \overline{ \varepsilon(x)}
        \;\dx =0.
\end{align*}
   Then, we have
   \begin{align*}
        \mathcal{H}_{ B, y_{0} }\bilin{ \varepsilon }{ \varepsilon }
        \geq
            \frac{C_{0}}{4}\int \sts{ \abs{\varepsilon_{x}}^{2} + \abs{\varepsilon}^{2} } \Phi_{B,y_{0}}\, \dx,
   \end{align*}
   where
   \begin{align*}
        \mathcal{H}_{ B, y_{0} }\bilin{ \varepsilon }{ \varepsilon }
        =
        &
        \frac{1}{2}\int\abs{\varepsilon_{x}}^{2}\Phi_{B,y_{0}}\,\dx
        +
        \frac{\omega}{2}\int\abs{\varepsilon}^{2}\Phi_{B,y_{0}}\,\dx
        -
        \frac{c}{2}\Im\int\overline{\varepsilon}\varepsilon_{x}\Phi_{B,y_{0}}\,\dx
        \\
        &
        +
        \frac{c}{8}
        \sts{
            2   \int\abs{ R
            }^{2}\abs{\varepsilon}^{2}\Phi_{B,y_{0}}\,\dx
            +
            4   \int \left[ \Re\sts{\overline{R}\varepsilon}
            \right]^{2}\Phi_{B,y_{0}}\,\dx
        }
        \\
        &
        -
        \frac{1}{32}
        \sts{
            3 \int\abs{ R
            }^{4}\abs{\varepsilon}^{2}\Phi_{B,y_{0}}\,\dx
            +
            12
            \int \abs{ R }^{2}\left[ \Re\sts{\overline{R}\varepsilon}
            \right]^{2}\Phi_{B,y_{0}}\,\dx
        }.
   \end{align*}
\end{lemma}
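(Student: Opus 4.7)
The plan is to reduce the localized coercivity to the non-localized one of Proposition \ref{prop:coer:asln} by transferring the weight $\Phi_{B,y_0}$ from the quadratic form onto the function itself. Since $\Phi>0$ everywhere, I set $v(x)\triangleq\varepsilon(x)\sqrt{\Phi_{B,y_0}(x)}$ (where $\Phi_{B,y_0}(x)=\Phi((x-y_0)/B)$) and aim to show
\begin{align*}
\mathcal{H}_{B,y_0}\bilin{\varepsilon}{\varepsilon}
 = \mathcal{H}_{\omega,c}\bilin{v}{v} + \bigo{B^{-2}}\int|\varepsilon|^2\Phi_{B,y_0}\,\dx,
\end{align*}
where $\mathcal{H}_{\omega,c}$ is the unweighted quadratic form around $R$ from Lemma \ref{lem:enlin:asln}. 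The pointwise identities $|\varepsilon|^2\Phi_{B,y_0}=|v|^2$ and $[\Re(\overline{R}\varepsilon)]^2\Phi_{B,y_0}=[\Re(\overline{R}v)]^2$ (using that $\sqrt{\Phi_{B,y_0}}$ is real) make the potential and quartic terms transfer with no error. The kinetic and the $\Im(\overline{\varepsilon}\varepsilon_x)\Phi_{B,y_0}$ terms generate commutator corrections involving one or two derivatives of $\sqrt{\Phi_{B,y_0}}$; after an integration by parts these are controlled by $B^{-2}\|\Phi''\|_\infty\int|\varepsilon|^2\Phi_{B,y_0}\,\dx$, using the algebraic identity $2\sqrt{\Phi_{B,y_0}}\,(\sqrt{\Phi_{B,y_0}})_x=(\Phi_{B,y_0})_x$.

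Next, the four orthogonality relations satisfied by $\varepsilon$ transfer to $v$ modulo exponentially small errors. Since each of $R,\,\i R,\,\partial_x R,\,\i\partial_x R+\tfrac12|R|^2R$ inherits the exponential decay of $\phi_{\omega,c}$ at rate $\theta_R=\tfrac12\sqrt{4\omega-c^2}$ away from $y_0$, and $\sqrt{\Phi_{B,y_0}}\equiv 1$ on $|x-y_0|\le B$, one has for any such $\Psi$
\begin{align*}
\Re\int\Psi\,\overline{v}\,\dx = \Re\int\Psi\,\overline{\varepsilon}\bigl(\sqrt{\Phi_{B,y_0}}-1\bigr)\,\dx = \bigo{e^{-\theta_R B/2}}\|\varepsilon\|_{L^2}.
\end{align*}
Using the non-degenerate Gram matrix computed in the proof of Lemma \ref{lem:aslnst:decomp} (which is invertible precisely because $c^2<4\omega$), I decompose $v=v_\perp+w$ with $v_\perp$ satisfying the exact four orthogonality conditions of Proposition \ref{prop:coer:asln} and $w$ in the span of $\{R,\i R,\partial_x R,\i\partial_x R+\tfrac12|R|^2R\}$ with $\|w\|_{H^1}\le Ce^{-\theta_R B/2}\|\varepsilon\|_{L^2}$. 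The gauge transform $\eta_\perp(x)\triangleq v_\perp(x+y_0)e^{-\i\gamma}e^{-\i cx/2}$ then meets the hypotheses of Proposition \ref{prop:coer:asln} (as in Lemma \ref{lem:asln:quadcoer}), yielding $\mathcal{H}_{\omega,c}\bilin{v_\perp}{v_\perp}\ge C_0\|v_\perp\|_{H^1}^2$; expanding $v=v_\perp+w$ and Cauchy--Schwarz give
\begin{align*}
\mathcal{H}_{\omega,c}\bilin{v}{v}\ge \tfrac{C_0}{2}\|v\|_{H^1}^2 - C\,e^{-\theta_R B}\|\varepsilon\|_{L^2}^2.
\end{align*}

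Combining the two steps, and using $\|v\|_{H^1}^2 = \int(|\varepsilon_x|^2+|\varepsilon|^2)\Phi_{B,y_0}\,\dx + \bigo{B^{-2}}\int|\varepsilon|^2\Phi_{B,y_0}\,\dx$, the $B^{-2}$ and $e^{-\theta_R B}$ errors can be absorbed into half of the $\tfrac{C_0}{2}$ prefactor for $B$ large enough, yielding the stated lower bound with constant $\tfrac{C_0}{4}$. The principal technical obstacle is the second step: one must verify that the tiny orthogonality defect of $v$ can be compensated by an $H^1$-small correction $w$ whose effect on $\mathcal{H}_{\omega,c}$ is controlled, so that the coercivity constant of Proposition \ref{prop:coer:asln} survives the projection. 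This rests on the quantitative non-degeneracy of the Gram matrix of $\{R,\i R,\partial_x R,\i\partial_x R+\tfrac12|R|^2R\}$ already exploited in Lemma \ref{lem:aslnst:decomp}, together with continuity of the quadratic form $\mathcal{H}_{\omega,c}$ on $H^1$.
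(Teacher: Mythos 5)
Your overall route is the same as the paper's: substitute $\zeta=\sqrt{\Phi_{B,y_{0}}}\,\varepsilon$, observe that the potential, momentum and quartic terms transfer exactly while the kinetic term produces commutators of size $\bigo{B^{-2}}\int\abs{\varepsilon}^2\Phi_{B,y_{0}}\,\dx$, note that the four orthogonality relations hold for $\zeta$ up to exponentially small defects supported on $\abs{x-y_0}>B$, and then invoke the flat coercivity of Lemma \ref{lem:asln:quadcoer}. Your explicit projection $v=v_\perp+w$ is a legitimate (and slightly more careful) way of obtaining the ``almost orthogonal'' coercivity that the paper asserts as a consequence of Lemma \ref{lem:asln:quadcoer}.

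There is, however, one step that fails as written: you bound the orthogonality defect, hence $\|w\|_{H^1}$, by $C e^{-\theta_R B/2}\normto{\varepsilon}$ with the \emph{unweighted} $L^2$ norm, and then claim the resulting error $Ce^{-\theta_R B}\normto{\varepsilon}^2$ can be absorbed into $\frac{C_0}{4}\int(\abs{\varepsilon_x}^2+\abs{\varepsilon}^2)\Phi_{B,y_0}\,\dx$. It cannot: since $\Phi_{B,y_0}(x)=e^{-\abs{x-y_0}/B}$ far from $y_0$, an $\varepsilon$ concentrated at distance $M\gg \theta_R B^{2}$ from $y_0$ has $\int\abs{\varepsilon}^2\Phi_{B,y_0}\,\dx\approx e^{-M/B}\normto{\varepsilon}^2\ll e^{-\theta_R B}\normto{\varepsilon}^2$, so the claimed inequality is false for such functions; the lemma's conclusion carries no unweighted error term, so this loss is not recoverable later. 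The fix is to measure the defect against the weighted norm: writing $\overline{v}=\overline{\varepsilon}\sqrt{\Phi_{B,y_0}}$ and using $\Phi_{B,y_0}^{-1}\leq e^{\abs{x-y_0}/B}$ together with the decay $\abs{\Psi(x)}\leq Ce^{-\frac{\sqrt{4\omega-c^2}}{2}\abs{x-y_0}}$ of each of the four functions $\Psi$, one gets
\begin{align*}
\Big|\Re\int \Psi\,\overline{v}\,\dx\Big|
=\Big|\Re\int_{\abs{x-y_0}>B}\Psi\,\overline{\varepsilon}\sts{\sqrt{\Phi_{B,y_0}}-1}\,\dx\Big|
\leq \normto{v}\Big(\int_{\abs{x-y_0}>B}\abs{\Psi}^2\Phi_{B,y_0}^{-1}\,\dx\Big)^{1/2}
\leq C e^{-\theta B}\normto{v},
\end{align*}
for $B$ large, whence $\|w\|_{H^1}\leq Ce^{-\theta B}\normto{v}$ and the absorption into $\frac{C_0}{2}\normhone{v}^2$ closes. (The paper's own displayed computation states the intermediate bound with $\normto{\varepsilon}$ but its conclusion with $\normto{\zeta}^2$; the weighted estimate above is what justifies that conclusion.) With this correction your argument matches the paper's proof.
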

\begin{proof}
By setting
$\zeta\sts{x}=\sqrt{\Phi_{B,y_{0}}\sts{x}}\varepsilon\sts{x},$  we
have
\begin{align*}
&
    \abs{\varepsilon_{x}}^{2}\Phi_{B,y_{0}}
    =
    \abs{\zeta_{x}}^{2} - \frac{ \Phi_{B,y_{0}}' }{ \Phi_{B,y_{0}} }\Re\sts{ \overline{\zeta}\zeta_{x} }
    +
    \frac{1}{4} \sts{ \frac{ \Phi_{B,y_{0}}' }{ \Phi_{B,y_{0}} } }^{2}\abs{\zeta}^{2}.
\\
&
    \Im\sts{ \overline{\varepsilon}\varepsilon_{x} } \Phi_{B,y_{0}}
    =
    \Im\sts{ \overline{\zeta}\zeta_{x} },
    \quad\text{ and }\quad
    \abs{\varepsilon}^{2}\Phi_{B,y_{0}} = \abs{\zeta}^{2}.
\end{align*}
Now, we rewrite the quadratic form $\mathcal{H}_{ B, y_{0} }$ as a
quadratic form with respect to $\zeta,$ which means
\begin{align*}
    \mathcal{H}_{ B, y_{0} }\bilin{\varepsilon}{\varepsilon}
    =
        &
    \mathcal{H}_{\omega,c}\bilin{ \zeta }{ \zeta }
    -
    \frac{1}{2}\Re\int \frac{ \Phi_{B,y_{0}}' }{ \Phi_{B,y_{0}}
    }\overline{\zeta}\zeta_{x}\,\dx
    -
    \frac{1}{4}
    \int\sts{ \frac{ \Phi_{B,y_{0}}' }{ \Phi_{B,y_{0}} } }^{2}\abs{\zeta}^{2}\,\dx.
\end{align*}
On the one hand, as a consequence of Lemma \ref{lem:asln:quadcoer},
we obtain
\begin{multline*}
    \mathcal{H}_{\omega,c}\bilin{ \zeta }{ \zeta }
    \geq
    \frac{C_{0}}{2}\normhone{\zeta}^{2}
    -
    \frac{ 2 }{ C_{0} }
    \left[
    \sts{ \Re\int R\; \overline{\zeta} \,\dx }^{2}
    +
    \sts{ \Re\int\sts{\i \partial_x R + \frac12
          \abs{R}^2R}\; \overline{\zeta} \,\dx
    }^{2}
    \right]
\\
    -
    \frac{ 2 }{C_{0} }
    \left[
    \sts{
        \Re\int \partial_x R\; \overline{\zeta} \,\dx
    }^{2}
    +
    \sts{
        \Re\int\i R\;
           \overline{ \zeta} \,\dx
    }^{2}
    \right].
\end{multline*}
On the other hand, a straight calculation implies that
\begin{align*}
    \abs{ \Re\int R\; \overline{\zeta} \,\dx}
    =
    &
    \abs{ \Re\int R\; \overline{\varepsilon}\sts{ 1-\sqrt{\Phi_{B,y_{0}}} } \,\dx}
    \\
    =
    &
    \abs{ \Re\int_{ \abs{ x-y_{0} }>B }R\; \overline{\varepsilon}\sts{ 1-\sqrt{\Phi_{B,y_{0}}} } \,\dx}
    \\
    \leq
    &
    \normto{\varepsilon}\sts{ \int_{ \abs{ x-y_{0} }>B }\abs{R}^{2} \,\dx}^{\frac{1}{2}}
    \\
    \leq
    &
    \frac{ C  }{ e^{ \sqrt{ 4\omega-c^{2} }\frac{B}{2} } }\normto{\varepsilon},
\end{align*}
moreover, applying the similar argument to $\Re\int\sts{\i
\partial_x R + \frac12
          \abs{R}^2R}\; \overline{\zeta},
$ $\Re\int \partial_x R\; \overline{\zeta}$ and $\Re\int i
R\;\overline{ \zeta}$ gives us that
\begin{align*}
  \mathcal{H}_{\omega,c}\bilin{ \zeta }{ \zeta }
  \geq
  \frac{C_{0}}{2}\normhone{\zeta}^{2}
  -
  \frac{ C  }{ e^{ \sqrt{ 4\omega-c^{2} }B } }\normto{\zeta}^{2}.
\end{align*}
Now it follows from $\abs{ \Phi_{B,y_{0}}' }\leq \frac{C
}{B}\Phi_{B,y_{0}}$ that, for $B>1$ large enough,
\begin{align*}
   \mathcal{H}_{ B,y_{0} }\bilin{\varepsilon}{\varepsilon}
    \geq
    \frac{C_{0}}{2}\normhone{\zeta}^{2}
    -
    \frac{ C  }{ e^{ \sqrt{ 4\omega-c^{2} }B } }\normto{\zeta}^{2}
    -
    \frac{C}{B^{2}} \normhone{\zeta}^{2}
    \geq \frac{3C_{0}}{8}\normhone{\zeta}^{2}.
\end{align*}
Since
\begin{align*}
    \abs{\zeta_{x}}^{2}
    =
    &
    \abs{\varepsilon_{x}}^{2}\Phi_{B,y_{0}}
    +
    \frac{ \Phi_{B,y_{0}}' }{ \Phi_{B,y_{0}} }\Re\sts{ \overline{\varepsilon}\varepsilon_{x} }\Phi_{B,y_{0}}
    +
    \frac{1}{4} \sts{ \frac{ \Phi_{B,y_{0}}' }{ \Phi_{B,y_{0}} } }^{2}
    \abs{\varepsilon}^{2}\Phi_{B,y_{0}}
    \\
    \geq
    &
    \sts{ 1-\frac{C}{B^{2}} } \abs{\varepsilon_{x}}^{2}\Phi_{B,y_{0}}
    -
    \frac{C}{B^{2}}\abs{\varepsilon}^{2}\Phi_{B,y_{0}}
\end{align*}
we obtain, for $B$ large enough,
\begin{align*}
    \mathcal{H}_{ B, y_{0} }\bilin{\varepsilon}{\varepsilon}
    \geq
    \frac{C_{0}}{4}\int \sts{ \abs{\varepsilon_{x}}^{2} + \abs{\varepsilon}^{2} } \Phi_{B,y_{0}} \,\dx.
\end{align*}
This ends the proof.
\end{proof}
\begin{proof}[Proof of Lemma \ref{lem:static:2coercivity}]
Since $L>1$ is sufficiently large enough, we can take
$B\in\sts{1,~\frac{L}{4}~}$ such that Claim
\ref{claim:app:loc:coercive} holds.
     \begin{align*}
       \mathcal{H}_{2}\sts{~\varepsilon~,~\varepsilon~}
       =
       &
       \mathcal{H}_{ B,x_{1} }\bilin{ \varepsilon }{ \varepsilon }
       +
       \mathcal{H}_{ B,x_{2} }\bilin{ \varepsilon }{ \varepsilon }
       \\
       &
       +
       \frac{1}{2}
       \int
        \left[  \abs{\varepsilon_{x}}^{2}
                +
                \omega_{1}\abs{\varepsilon}
                -
                c_{1}\Im\sts{ \overline{\varepsilon}\varepsilon_{x} }
        \right]\sts{ g - \Phi_{B,x_{1}} }\,\dx
       \\
       &
       +
       \frac{1}{2}
       \int
        \left[  \abs{\varepsilon_{x}}^{2}
                +
                \omega_{2}\abs{\varepsilon}
                -
                c_{2}\Im\sts{ \overline{\varepsilon}\varepsilon_{x} }
        \right]\sts{ h - \Phi_{B,x_{2}} }\,\dx
        \\
        &
        +
        \frac{c_{1}}{8}
        \sts{
            2
            \int \abs{R_{1}}^{2}\abs{\varepsilon}^{2}
            \sts{ 1 - \Phi_{B,x_{1}} }\,\dx
            +
            4
            \int\left[ \Re\sts{ \overline{R}_{2}\varepsilon } \right]^{2}
            \sts{ 1 - \Phi_{B,x_{1}} }\,\dx
        }
       \\
        &
        +
        \frac{c_{2}}{8}
        \sts{
            2
            \int \abs{R_{2}}^{2}\abs{\varepsilon}^{2}
            \sts{ 1 - \Phi_{B,x_{2}} }\,\dx
            +
            4
            \int\left[ \Re\sts{ \overline{R}_{1}\varepsilon } \right]^{2}
            \sts{ 1 - \Phi_{B,x_{2}} }\,\dx
        }
       \\
       &
       -
       \frac{1}{32}
       \left(
        3
        \int \abs{R_{1}}^{4}\abs{\varepsilon}^{2}
        \sts{ 1 - \Phi_{B,x_{1}} }\,\dx
        +
        12
        \int\abs{R_{1}}^{2}\left[ \Re\sts{ \overline{R}_{1}\varepsilon } \right]^{2}
        \sts{ 1 - \Phi_{B,x_{1}}}\,\dx
       \right)
       \\
       &
       -
       \frac{1}{32}
       \left(
        3
        \int \abs{R_{2}}^{4}\abs{\varepsilon}^{2}
        \sts{ 1 - \Phi_{B,x_{2}} }\,\dx
        +
        12
        \int\abs{R_{2}}^{2}\left[ \Re\sts{ \overline{R}_{2}\varepsilon } \right]^{2}
        \sts{ 1 - \Phi_{B,x_{2}} }\,\dx
       \right).
     \end{align*}
It follows from a direct computation that
\begin{align*}
  g\sts{x}-\Phi_{B,x_{1}}\sts{x}
  \left\{
    \begin{array}{ll}
      = 0, & \abs{ x-x_{1} }<\frac{L}{8}, \\
      \geq -e^{-\frac{L}{8B}}, & \text{ else },
    \end{array}
  \right.
\\
  h\sts{x}-\Phi_{B,x_{2}}\sts{x}
  \left\{
    \begin{array}{ll}
      = 0, & \abs{ x-x_{2} }<\frac{L}{8}, \\
      \geq -e^{-\frac{L}{8B}}, & \text{ else },
    \end{array}
  \right.
\\
  1-\Phi_{B,x_{1}}\sts{x}
  \left\{
    \begin{array}{ll}
      = 0, & \abs{ x-x_{1} }<\frac{L}{8}, \\
      \geq -e^{-\frac{L}{8B}}, & \text{ else },
    \end{array}
  \right.
\end{align*}
and
\begin{align*}
  1-\Phi_{B,x_{2}}\sts{x}
  \left\{
    \begin{array}{ll}
      = 0, & \abs{ x-x_{2} }<\frac{L}{8}, \\
      \geq -e^{-\frac{L}{8B}}, & \text{ else }.
    \end{array}
  \right.
\end{align*}
Moreover, since, for $k=1,2,$ $c_{k}^{2}<4\omega_{k},$ there exists
$\delta_{k}>0$ such that
\begin{align*}
    \abs{\varepsilon_{x}}^{2}
    +
    \omega_{k}\abs{\varepsilon}^{2}
    -
    c_{k}\Im\sts{ \overline{\varepsilon}\varepsilon_{x} }
    \geq
    \delta_{k}\sts{ \abs{\varepsilon_{x}}^{2} + \abs{\varepsilon}^{2}   }.
\end{align*}
Thus, taking $L$ large enough, we obtain
\begin{align*}
    \mathcal{H}_{2}\sts{~\varepsilon~,~\varepsilon~}
    \geq
    &\;
    \frac{C_{0}}{4}
    \int \sts{ \abs{\varepsilon_{x}}^{2} + \abs{\varepsilon}^{2} }
    \Phi_{B,x_{1}}\,\dx
    +
    \frac{C_{0}}{4}
    \int \sts{ \abs{\varepsilon_{x}}^{2} + \abs{\varepsilon}^{2} }
    \Phi_{B,x_{2}}\,\dx
    \\
    &
    +
    \delta_{1}
    \int \sts{ \abs{\varepsilon_{x}}^{2} + \abs{\varepsilon}^{2}   }\sts{ g - \Phi_{B,x_{1}}
    }\,\dx
    +
    \delta_{2}
    \int \sts{ \abs{\varepsilon_{x}}^{2} + \abs{\varepsilon}^{2}   }\sts{ h - \Phi_{B,x_{2}}
    }\,\dx
    \\
    &
    -
    C  e^{-\frac{L}{4B}}
    \int    \sts{ \abs{\varepsilon_{x}}^{2} + \abs{\varepsilon}^{2}
    }\,\dx
    -
    C e^{ -\sqrt{ 4\omega_{1}-c_{1}^{2} }\frac{L}{4} }\int
    \abs{\varepsilon}^{2}\,\dx
    -
    C e^{ -\sqrt{ 4\omega_{2}-c_{2}^{2} }\frac{L}{4} }\int
    \abs{\varepsilon}^{2}\,\dx
    \\
     \geq   &\;
   C_1
    \normhone{ \varepsilon }^{2},
\end{align*}
where $C_1 =
\frac{1}{2}\min\ltl{\frac{C_{0}}{4},\delta_{1},\delta_{2}}.$
This concludes the proof.
\end{proof}
\end{appendices}

\subsection*{Acknowledgements.} The authors would like to thank the referee for his/her
valuable comments and suggestions to help us to improve this paper. A few months after we submitted our paper, S. Le Coz and Y. Wu  obtained the
stability of a $k$-soliton solution of (DNLS) independently in \cite{LeWu:DNLS}.  The authors were partially supported by the NSF grant of China and partially supported by Beijing Center of Mathematics and Information Interdisciplinary Science.


\end{document}